\newtheorem{theorem}{Theorem}[section]
\newtheorem{corollary}[theorem]{Corollary}
\newtheorem{lemma}[theorem]{Lemma}
\newtheorem{proposition}{Proposition}
\theoremstyle{definition}
\newcommand{\eps}{\varepsilon}
\DeclareMathOperator{\e}{e} 
\DeclareMathOperator{\sgn}{sgn} 
\DeclareMathOperator{\tr}{tr} 
\title[Planar S-systems: permanence] 
      {Planar S-systems: permanence}
\author[B. Boros and J. Hofbauer]{}
\subjclass{34C05, 34C11, 34C14, 34C23, 34C37, 34C45, 80A30.}
 \keywords{Homogeneous Lotka-Volterra systems, replicator dynamics, permanence, heteroclinic cycle, limit cycles, center manifold, Andronov-Hopf bifurcation, Bautin bifurcation, dihedral group.}
\thanks{BB is supported by the Austrian Science Fund (FWF), project P28406.}
\begin{document}
\maketitle

%

\centerline{\scshape Bal\'azs Boros and Josef Hofbauer}
\medskip
{\footnotesize
 \centerline{Faculty of Mathematics, University of Vienna}
 \centerline{Oskar-Morgenstern-Platz 1, 1090 Vienna, Austria}
}

\bigskip


\begin{abstract}
We characterize permanence of planar S-systems. Further, we construct a planar S-system with three limit cycles.
\end{abstract}

\section{Introduction}

An S-system is a dynamical system on the positive orthant for which the right hand side is given by differences of power products (monomials) with real exponents. They were introduced by Savageau \cite{Savageau1969} in the context of biochemical systems theory. In a previous paper \cite{boros:hofbauer:mueller:regensburger:2017} we studied planar S-systems, especially the local and global asymptotic stability of the unique positive equilibrium and also the center problem.

In the present paper we characterize (except for some boundary case) the permanence of planar S-systems. This is done by first transforming planar S-systems into a $3$-dimensional replicator dynamics.

The results will be illustrated for some special cases: Selkov's model for glycolytic oscillations \cite{Selkov68, brechmann:rendall:2018} and the Lotka reactions with generalized mass-action kinetics \cite{dancso:farkas:farkas:szabo:1991,boros:hofbauer:mueller:2017}.

Finally, the obtained results allow us to construct a planar S-system with three limit cycles. This improves the previous studies \cite{dancso:farkas:farkas:szabo:1991} (one limit cycle) and \cite{boros:hofbauer:mueller:regensburger:2017,boros:hofbauer:mueller:regensburger:2017b} (two limit cycles).

\section{Planar S-systems}\label{sec:planar_s_systems}

A planar S-system is given by
\begin{align} \label{eq:ode_S}
\begin{split}
\dot x_1 &= \alpha_1 \, x_1^{g_{11}} x_2^{g_{12}} - \beta_1 \, x_1^{h_{11}} x_2^{h_{12}}  , \\
\dot x_2 &= \alpha_2 \, x_1^{g_{21}} x_2^{g_{22}} - \beta_2 \, x_1^{h_{21}} x_2^{h_{22}}   
\end{split}
\end{align}
with $\alpha_1, \alpha_2, \beta_1, \beta_2 \in \mathbb{R}_+$ and $g_{11},g_{12},g_{21},g_{22},h_{11},h_{12},h_{21},h_{22} \in \mathbb{R}$. Since we allow real exponents, we study the dynamics on the positive quadrant $\mathbb{R}_+^2 = \{(x_1,x_2) \in \mathbb{R}^2 ~|~ x_1>0,x_2>0\}$. Our aim in this paper is to characterize the parameters for which the ODE~\eqref{eq:ode_S} is \emph{permanent}, meaning that there exists a compact subset of $\mathbb{R}^2_+$ that is forward invariant and is a global attractor.

A short calculation shows that there is either $0$, $1$, or infinitely many positive equilibria, and, if there is no positive equilibrium then the system cannot be permanent. We thus concentrate on the case, when there exists a (not necessarily unique) positive equilibrium $(x_1^*,x_2^*)$. Introducing
\begin{align*}
\gamma_1 = \alpha_1 (x_1^*)^{g_{11}-1} (x_2^*)^{g_{12}} \text{ and } \gamma_2 = \alpha_2 (x_1^*)^{g_{21}} (x_2^*)^{g_{22}-1},
\end{align*}
we perform the nonlinear transformation
\begin{align*}
u = \frac{1}{\gamma_1}\log\frac{x_1}{x_1^*} \text{ and } v = \frac{1}{\gamma_2}\log\frac{x_2}{x_2^*}.
\end{align*}
This leads to the ODE
\begin{align} \label{eq:ode_exp}
\begin{split}
\dot u &= \e^{a_1 u + b_1 v} - \e^{a_2 u + b_2 v} , \\
\dot v &= \e^{a_3 u + b_3 v} - \e^{a_4 u + b_4 v} 
\end{split}
\end{align}
with state space $\mathbb{R}^2$, where 
\begin{align} \label{eq:a_b_gamma_g_h}
\begin{aligned}
a_1 &= \gamma_1 (g_{11}-1), &b_1 &= \gamma_2 g_{12}   , \\
a_2 &= \gamma_1 (h_{11}-1), &b_2 &= \gamma_2 h_{12}   , \\
a_3 &= \gamma_1  g_{21}   , &b_3 &= \gamma_2(g_{22}-1), \\
a_4 &= \gamma_1  h_{21}   , &b_4 &= \gamma_2(h_{22}-1).
\end{aligned}
\end{align}

We say that the ODE~\eqref{eq:ode_exp} is \emph{permanent} if there exists a compact subset of $\mathbb{R}^2$ that is forward invariant and is a global attractor. Clearly, the permanence of the ODE~\eqref{eq:ode_S} is equivalent to the permanence of the ODE~\eqref{eq:ode_exp} with \eqref{eq:a_b_gamma_g_h}.

The ODE~\eqref{eq:ode_exp} admits the origin as an equilibrium. The Jacobian matrix $J$ at the origin is given by
\begin{align} \label{eq:J}
J =
\begin{pmatrix}
a_1-a_2 & b_1-b_2 \\
a_3-a_4 & b_3-b_4
\end{pmatrix}.
\end{align}
A short calculation shows that if $\det J = 0$ then the set of equilibria is either a line through the origin or the whole $\mathbb{R}^2$. Thus, the system cannot be permanent for $\det J = 0$. To characterize permanence for the ODE~\eqref{eq:ode_S}, it suffices to characterize those $a_1$, $a_2$, $a_3$, $a_4$, $b_1$, $b_2$, $b_3$, $b_4 \in \mathbb{R}$, for which $\det J \neq 0$ and the ODE~\eqref{eq:ode_exp} is permanent. The aim of this paper is to perform this characterization (except for some boundary case). Crucial for this is the relative position of the four points $P_i=(a_i,b_i)$ for $i = 1, 2, 3, 4$ in the plane. Define the numbers $c_1$, $c_2$, $c_3$, $c_4$ by
\begin{align} \label{eq:c_Delta}
\begin{split}
c_1 &= \Delta(243), \\
c_2 &= \Delta(134), \\
c_3 &= \Delta(142), \\
c_4 &= \Delta(123),
\end{split}
\end{align}
where $\Delta(ijk)=\det(P_j-P_i, P_k-P_i)$ is twice the signed area of the triangle $P_iP_jP_k$. The quantity $\Delta(ijk)$ is thus positive (respectively, negative) if the sequence $P_i$, $P_j$, $P_k$, $P_i$ of points are positively (respectively, negatively) oriented. The quantity $\Delta(ijk)$ is zero if the three points $P_i$, $P_j$, $P_k$ lie on a line. Note also that
\begin{align*}
\Delta(ijk) = \Delta(jki) =  \Delta(kij) = -\Delta(jik) = -\Delta(ikj) = -\Delta(kji)
\end{align*}
and $c_1 + c_2 + c_3 + c_4 = 0$.

Now we show how the sign pattern of $c=(c_1,c_2,c_3,c_4)$ is related to the relative position of the four points $P_1$, $P_2$, $P_3$, $P_4$. There are four qualitatively different situations. (The case $c = (0,0,0,0)$ we ignore, because then the four points $P_1$, $P_2$, $P_3$, $P_4$ are co-linear, contradicting $\det J \neq 0$.)
\begin{enumerate}[(i)]
\item When $\sgn c = (+,+,-,-)$, the four points $P_1$, $P_2$, $P_3$, $P_4$ form a quadrangle with diagonals $P_1P_2$ and $P_3P_4$. The geometric interpretation of the equality $\frac{c_1}{2} + \frac{c_2}{2} = \frac{-c_3}{2} + \frac{-c_4}{2}$ is that the area of this quadrangle can be written as the sum of the areas of the triangles $P_2P_4P_3$ and $P_1P_3P_4$, or alternatively as the sum of the areas of the triangles $P_1P_2P_4$ and $P_1P_3P_2$.
\item When $\sgn c = (+,-,-,-)$, the three points $P_2$, $P_3$, $P_4$ form a triangle and the point $P_1$ lies inside. The geometric interpretation of the equality $\frac{c_1}{2} = \frac{-c_2}{2} + \frac{-c_3}{2} + \frac{-c_4}{2}$ is that the area of this triangle can be written as the sum of the areas of the triangles $P_1P_4P_3$, $P_1P_2P_4$, $P_1P_3P_2$.
\item When $\sgn c = (+,0,-,-)$, the three points $P_2$, $P_3$, $P_4$ form a triangle and the point $P_1$ lies in the edge $P_3P_4$. The geometric interpretation of the equality $\frac{c_1}{2} = \frac{-c_3}{2} + \frac{-c_4}{2}$ is that the area of this triangle can be written as the sum of the areas of the triangles $P_1P_2P_4$, $P_1P_3P_2$.
\item When $\sgn c = (+,0,-,0)$, the three points $P_2$, $P_3$, $P_4$ form a triangle and the point $P_1$ coincides with $P_3$. Clearly, $\frac{c_1}{2} = \frac{-c_3}{2}$, because the triangles $P_3P_2P_4$ and $P_1P_2P_4$ coincide, and thus, their area are the same.
\end{enumerate}

\section{Main results} \label{sec:main_results}

In this section we list the main results of this paper.

The following simple lemma states that permanence of the ODE~\eqref{eq:ode_exp} is possible only under $\det J  > 0$.

\begin{lemma}\label{lem:perm_implies_det>0}
If $\det J \leq 0$ then the ODE~\eqref{eq:ode_exp} is not permanent.
\end{lemma}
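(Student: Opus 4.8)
The plan is to rule out permanence separately in the degenerate case $\det J = 0$ and the saddle case $\det J < 0$, after first pinning down the equilibrium set. Observe that $(u,v)$ is an equilibrium of \eqref{eq:ode_exp} if and only if $(a_1-a_2)u+(b_1-b_2)v = 0$ and $(a_3-a_4)u+(b_3-b_4)v=0$, that is, if and only if $(u,v) \in \ker J$. Moreover, any global attractor $A$ must contain every equilibrium: if $p$ is a fixed point then its trajectory is constantly $p$, so $\operatorname{dist}(\phi_t(p),A)\to 0$ forces $p \in A$, since $A$ is compact and hence closed.

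When $\det J = 0$, the set $\ker J$ is either a line through the origin (if $\operatorname{rank} J = 1$) or all of $\mathbb{R}^2$ (if $J=0$); in either case it is unbounded. By the previous paragraph a compact global attractor would have to contain this unbounded set, which is impossible. Hence \eqref{eq:ode_exp} is not permanent when $\det J = 0$.

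When $\det J < 0$, the eigenvalues of $J$ are real and of opposite sign, so the origin is a nondegenerate saddle, and it is the \emph{only} equilibrium because $\ker J = \{0\}$. Its Poincar\'e index equals $-1$. Suppose for contradiction that a compact forward-invariant global attractor $A$ exists. The plan is to produce a Jordan curve $\Gamma$ enclosing $A$ (hence enclosing the saddle, which is the unique equilibrium) along which the vector field points strictly inward, and then to invoke the Poincar\'e index theorem: the index of such a $\Gamma$ equals $+1$, whereas the sum of the indices of the equilibria it encloses equals $-1$, a contradiction. To obtain $\Gamma$ I would use that a compact global attractor admits a positively invariant compact neighborhood $N$ with $A = \bigcap_{t\ge 0}\phi_t(N)$ and with the flow crossing a smooth boundary transversally inward; the outer boundary component of such an $N$ then serves as $\Gamma$.

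The main obstacle is precisely this construction of the trapping curve $\Gamma$ from the bare hypothesis that $A$ is a compact forward-invariant global attractor: one must upgrade ``global attractor'' to the existence of a positively invariant neighborhood whose smooth boundary is crossed transversally by the flow. This is standard attractor theory (a Conley/Wilson-type Lyapunov function for $A$ whose sublevel sets are trapping regions, made transverse by selecting a regular value), and once it is available the index computation is immediate.
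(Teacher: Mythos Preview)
Your argument is correct and follows essentially the same route as the paper: for $\det J=0$ you use that the equilibrium set $\ker J$ is unbounded, and for $\det J<0$ you use the Poincar\'e index of the unique equilibrium (a saddle, index $-1$) versus the index $+1$ forced by permanence. The paper's proof is just a terse two-line version of this; your explicit discussion of why permanence yields a trapping curve (via a Conley--Wilson Lyapunov function) fills in precisely the step the paper leaves implicit.
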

\begin{proof}
We discussed in Section \ref{sec:planar_s_systems} that the ODE~\eqref{eq:ode_exp} cannot be permanent under $\det J = 0$.

In case $\det J < 0$, the origin is the unique equilibrium, and therefore, would the system be permanent, the index of the origin is $+1$, contradicting $\det J < 0$. Thus, the system cannot be permanent under $\det J < 0$.
\end{proof}

The following three theorems provide an almost complete characterization of permanence of the ODE~\eqref{eq:ode_exp}. The first one deals with the easier case when the diagonal entries in the Jacobian matrix at the origin are of the same sign or one of them is zero. The third one deals with the more complicated case when the diagonal entries have opposite nonzero sign. The case of a heteroclinic cycle at infinity needs a separate treatment, this is dealt with in the second of these three theorems. The proofs of the latter two are given in Sections \ref{sec:S2R} and \ref{sec:proof_oppositesign}.

\begin{theorem}\label{thm:samesign}
Assume that $J_{11}J_{22} \geq 0$ (i.e., $(a_1-a_2)(b_3-b_4)\geq 0$). Then the following three statements are equivalent.
\begin{enumerate}[(i)]
\item\label{it:samesign_perm} The ODE~\eqref{eq:ode_exp} is permanent.
\item\label{it:samesign_gas} The origin is globally asymptotically stable for the ODE~\eqref{eq:ode_exp}.
\item\label{it:samesign_charact} $\det J > 0$ and one of (A), (B1), (B2) below holds.
\begin{enumerate}
\item[(A)] $\sgn J = \begin{pmatrix} - & * \\ * & - \end{pmatrix}$
\item[(B1)] $\sgn J = \begin{pmatrix} 0 & * \\ * & - \end{pmatrix}$ and $\min(a_3,a_4) \leq a_2 = a_1 \leq \max(a_3,a_4)$
\item[(B2)] $\sgn J = \begin{pmatrix} - & * \\ * & 0 \end{pmatrix}$ and $\min(b_1,b_2) \leq b_4 = b_3 \leq \max(b_1,b_2)$
\end{enumerate}
\end{enumerate}
\end{theorem}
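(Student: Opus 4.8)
The engine of the whole argument is a single Dulac function that annihilates periodic orbits in the entire plane as soon as the diagonal of $J$ avoids strictly opposite signs. Writing $L_i=a_iu+b_iv$ and $F=(\dot u,\dot v)$ for the field \eqref{eq:ode_exp}, and taking $B(u,v)=\e^{\lambda u+\mu v}$, one computes
\[
\operatorname{div}(BF)=B\bigl[(a_1+\lambda)\e^{L_1}-(a_2+\lambda)\e^{L_2}+(b_3+\mu)\e^{L_3}-(b_4+\mu)\e^{L_4}\bigr].
\]
If $J_{11}=a_1-a_2\le0$ and $J_{22}=b_3-b_4\le0$ I would choose $\lambda\in[-a_2,-a_1]$ and $\mu\in[-b_4,-b_3]$; then every bracketed coefficient is $\le0$, so $\operatorname{div}(BF)\le0$, and it vanishes identically only when $J_{11}=J_{22}=0$ (the excluded co-linear case). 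The mirror choice $\lambda\in[-a_1,-a_2]$, $\mu\in[-b_3,-b_4]$ makes $\operatorname{div}(BF)\ge0$ when $J_{11}\ge0$, $J_{22}\ge0$. Since the hypothesis $J_{11}J_{22}\ge0$ forces one of these two regimes, Bendixson--Dulac rules out all periodic orbits and homoclinic loops of \eqref{eq:ode_exp}.

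For (\ref{it:samesign_perm})$\Rightarrow$(\ref{it:samesign_charact}) I argue by contraposition. Permanence gives $\det J>0$ by Lemma \ref{lem:perm_implies_det>0}. If the diagonal entries were both $\ge0$ with at least one nonzero, then $\tr J>0$ and $\det J>0$ would make the origin a source; with no periodic orbits or homoclinic loops available to trap the dynamics, Poincar\'e--Bendixson forces every forward orbit off to infinity, contradicting permanence. Hence $J_{11}\le0$ and $J_{22}\le0$, which together with $\det J>0$ (and the impossibility of $J_{11}=J_{22}=0$) is exactly the sign pattern (A), (B1) or (B2). It then remains to recover the positional inequalities in the degenerate cases: when, say, $J_{11}=0$ one has the factorization $\dot u=\e^{a_1u}(\e^{b_1v}-\e^{b_2v})$, so the $u$-nullcline is the line $v=0$, and I would show that violating $\min(a_3,a_4)\le a_1\le\max(a_3,a_4)$ produces an orbit running to $u=\pm\infty$ along this line, again contradicting permanence.

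For (\ref{it:samesign_charact})$\Rightarrow$(\ref{it:samesign_gas})$\Rightarrow$(\ref{it:samesign_perm}) I would first note that $\det J>0$ and $\tr J<0$ (which hold throughout (A), (B1), (B2)) make the origin locally asymptotically stable, and that the Dulac function above forbids periodic orbits. The crux is boundedness of forward orbits: under (A) I would build a large forward-invariant region by checking that the dominant exponential among $\e^{L_1},\dots,\e^{L_4}$ makes the field point inward in every asymptotic direction, while under (B1), (B2) the positional inequality is precisely what prevents escape along the degenerate nullcline identified above. Given boundedness and the absence of periodic orbits, Poincar\'e--Bendixson leaves the origin as the only possible $\omega$-limit set, so it is globally attracting; with local stability this is global asymptotic stability. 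Finally, global asymptotic stability together with boundedness yields a compact forward-invariant global attractor, i.e. permanence, closing the cycle.

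The routine parts are the Dulac computation and the linear analysis at the origin; the genuine obstacle is the behaviour at infinity. Deciding boundedness amounts to determining which of the four monomials dominates in each asymptotic direction and assembling these into a trapping region, or conversely into an escaping orbit. The delicate threshold is exactly the degenerate case, where one nullcline runs straight to infinity and the balance between $\dot u$ and $\dot v$ along it is governed by comparing $a_1$ with $a_3$ and $a_4$; this is where the inequalities $\min(a_3,a_4)\le a_1\le\max(a_3,a_4)$ originate and where I expect the argument to demand the most care.
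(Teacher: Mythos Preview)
Your Dulac function is exactly the tool the paper uses: it multiplies the field by $\e^{-a_1u-b_4v}$ (your choice $\lambda=-a_1$, $\mu=-b_4$) and obtains divergence $(a_1-a_2)\e^{L_2-L_1}+(b_3-b_4)\e^{L_3-L_4}$, then argues that under $J_{11}J_{22}\ge0$ this has a definite sign. The paper's route from (\ref{it:samesign_perm}) to (\ref{it:samesign_charact}) is slightly different from yours: rather than invoking Poincar\'e--Bendixson around a source, it notes directly that nonnegative divergence (area invariant or expanding) is incompatible with a compact global attractor. Both arguments are valid and essentially equivalent.

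The main structural difference is in the remaining two implications. The paper does not carry out the boundedness and global-stability analysis itself: for the positional inequalities in (B1), (B2) it cites \cite[Lemma~5]{boros:hofbauer:mueller:regensburger:2017b}, and for (\ref{it:samesign_charact})$\Rightarrow$(\ref{it:samesign_gas}) it cites \cite[Theorem~3]{boros:hofbauer:mueller:regensburger:2017b} wholesale. Your proposal instead sketches these steps directly---the asymptotic-direction trapping region under (A), and the escape-along-the-nullcline mechanism under (B1), (B2). These sketches are on the right track and, if completed, would make the proof self-contained, which the paper's is not. You correctly identify the degenerate case as the delicate one: your observation that $\dot u=\e^{a_1u}(\e^{b_1v}-\e^{b_2v})$ when $a_1=a_2$ is precisely the entry point of the cited lemma, and the comparison of $a_1$ with $a_3,a_4$ is indeed what governs boundedness there. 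So your approach is sound and aligns with the paper's, the difference being that you propose to do work the paper outsources.
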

\begin{proof}
First, we prove the implication \eqref{it:samesign_perm} $\Rightarrow$ \eqref{it:samesign_charact}. By Lemma \ref{lem:perm_implies_det>0}, permanence implies that $\det J > 0$. After multiplying the vector field by $e^{-a_1 u - b_4 v}$, its divergence is
\begin{align*}
(a_1-a_2)e^{(a_2-a_1)u+(b_2-b_4)v} + (b_3-b_4)e^{(a_3-a_1)u+(b_3-b_4)v},
\end{align*}
which is negative if at least one of $a_1-a_2$ and $b_3-b_4$ is negative, zero if $a_1-a_2 = b_3-b_4 = 0$, and positive if at least one of $a_1-a_2$ and $b_3-b_4$ is positive. The latter two cases cannot lead to a permanent system as the area is invariant or expanding. This leaves the three sign patterns of $J$ given in (A), (B1), (B2). In the latter two cases, the conditions $\min(a_3,a_4) \leq a_2 = a_1 \leq \max(a_3,a_4)$ and $\min(b_1,b_2) \leq b_4 = b_3 \leq \max(b_1,b_2)$ follow from the boundedness of all solutions, see \cite[Lemma 5 (b2), (c2), (d2), (e2)]{boros:hofbauer:mueller:regensburger:2017b}.

The implication \eqref{it:samesign_charact} $\Rightarrow$ \eqref{it:samesign_gas} follows from \cite[Theorem 3]{boros:hofbauer:mueller:regensburger:2017b}.

Finally, the implication \eqref{it:samesign_gas} $\Rightarrow$ \eqref{it:samesign_perm} is obvious.
\end{proof}

\begin{theorem}\label{thm:heteroclinic}
Assume that $\det J > 0$ and further that either (A) or (B) below holds.
\begin{enumerate}[(A)]
\item\label{it:heteroclinicA} $\sgn J = \begin{pmatrix}*&-\\+&*\end{pmatrix}$ and $\left\{\begin{array}{l}
a_4 \leq \min(a_1,a_2) \leq \max(a_1,a_2) \leq a_3 \\
b_1 \leq \min(b_3,b_4) \leq \max(b_3,b_4) \leq b_2
\end{array}\right.$
\item\label{it:heteroclinicB} $\sgn J = \begin{pmatrix}*&+\\-&*\end{pmatrix}$ and $\left\{\begin{array}{l}
a_3 \leq \min(a_1,a_2) \leq \max(a_1,a_2) \leq a_4 \\
b_2 \leq \min(b_3,b_4) \leq \max(b_3,b_4) \leq b_1
\end{array}\right.$
\end{enumerate}
In case \eqref{it:heteroclinicA}, let
\begin{align*}
L_\infty = (a_3 - a_1)(b_2 - b_3)(a_2 - a_4)(b_4 - b_1) - (b_1 - b_3)(a_2 - a_3)(b_4 - b_2)(a_4 - a_1),
\end{align*}
while in case \eqref{it:heteroclinicB}, let
\begin{align*}
L_\infty = (b_1 - b_3)(a_2 - a_3)(b_4 - b_2)(a_4 - a_1) - (a_3 - a_1)(b_2 - b_3)(a_2 - a_4)(b_4 - b_1).
\end{align*} 
Then the following two statements hold.
\begin{enumerate}[(i)]
\item If $L_\infty > 0$ then the ODE~\eqref{eq:ode_exp} is permanent.
\item If $L_\infty < 0$ then the ODE~\eqref{eq:ode_exp} is not permanent.
\end{enumerate}
\end{theorem}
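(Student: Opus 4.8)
The plan is to deduce Theorem~\ref{thm:heteroclinic} from the behaviour of \eqref{eq:ode_exp} at infinity, which under the stated hypotheses is organized by a heteroclinic cycle. I treat case~\eqref{it:heteroclinicA}; case~\eqref{it:heteroclinicB} follows after interchanging the two coordinates. The sign pattern of $J$ together with the two chains of inequalities says precisely that, among the points $P_i=(a_i,b_i)$, the point $P_3$ is rightmost, $P_4$ leftmost, $P_2$ uppermost and $P_1$ lowermost, so that $P_1,P_3,P_2,P_4$ are the vertices of a convex quadrangle in counterclockwise order. First I would describe the dynamics at infinity: passing to the replicator reformulation of Section~\ref{sec:S2R} (or using a compactification adapted to the exponential nonlinearity) makes the circle of directions at infinity invariant. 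In the angular range pointing toward a vertex $P_i$ a single monomial $\e^{a_iu+b_iv}$ dominates and drives the field strongly in the $+u$, $-u$, $+v$ or $-v$ direction according to whether $i=1,2$ (first equation) or $i=3,4$ (second equation); as the direction sweeps the circle counterclockwise through the four normal cones of the vertices, the field rotates once around through all four axis directions. This rotation forces four equilibria at infinity, one associated with each edge of the quadrangle, linked by heteroclinic connections into a single cycle, whose precise connections I would establish in the compactified (replicator) coordinates.

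Second, I would linearize at each of these four equilibria, in the replicator chart or after a blow-up, to extract the two rates governing the local saddle. The expectation, to be confirmed by the computation, is that the equilibrium attached to the edge $P_iP_j$ carries an expanding and a contracting eigenvalue equal, up to a common positive factor, to the two coordinate increments of the edge vector $P_j-P_i$; under \eqref{it:heteroclinicA} each such saddle has exactly one positive (expanding) and one negative (contracting) eigenvalue. Collecting the expanding rates around the cycle $P_1\to P_3\to P_2\to P_4\to P_1$ then gives $(a_3-a_1)(b_2-b_3)(a_2-a_4)(b_4-b_1)$, while the magnitudes of the contracting rates give $(b_3-b_1)(a_3-a_2)(b_2-b_4)(a_1-a_4)=(b_1-b_3)(a_2-a_3)(b_4-b_2)(a_4-a_1)$. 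All eight factors are nonnegative under \eqref{it:heteroclinicA}, and $L_\infty$ is exactly the product of the expanding rates minus the product of the contracting rates.

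Third, I would invoke the standard stability criterion for heteroclinic cycles: such a cycle repels the interior (the finite plane) exactly when the product of the expanding rates exceeds the product of the contracting rates, that is, exactly when $L_\infty>0$, and attracts it when $L_\infty<0$. To convert repulsion into permanence I would use an average Lyapunov function of the form $\prod_i(\e^{a_iu+b_iv})^{p_i}$; the solvability for positive weights $p_i$ of the linear inequalities attached to the cycle is equivalent to the product condition, so for $L_\infty>0$ Hofbauer's theorem produces a compact forward-invariant global attractor in $\mathbb{R}^2$, i.e.\ permanence. For $L_\infty<0$ the cycle attracts an open set of orbits, which therefore leave every compact set, so no compact global attractor can exist and \eqref{eq:ode_exp} is not permanent.

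The principal obstacle is the eigenvalue analysis at infinity. Because the vector field is assembled from exponentials rather than polynomials, the usual Poincar\'e compactification does not apply directly and the equilibria at infinity are non-hyperbolic, so the expanding and contracting rates have to be recovered through the replicator embedding or a tailored blow-up and then matched to the two products in $L_\infty$ with the correct signs and cyclic bookkeeping. A secondary difficulty is checking the hypotheses of the heteroclinic-cycle theorem --- that the four edges are indeed the heteroclinic connections, that the cycle is the maximal invariant set at infinity, and that the needed transversality holds under \eqref{it:heteroclinicA} --- together with the verification that the weights $p_i$ can be chosen positive precisely when $L_\infty>0$. The borderline case $L_\infty=0$ is genuinely degenerate and, as the statement reflects, is left untreated.
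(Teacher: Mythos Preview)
Your strategy coincides with the paper's: pass to the replicator embedding of Section~\ref{sec:S2R}, identify a heteroclinic cycle on the boundary $\partial S$, and decide its stability via the product of expanding versus contracting rates (Hofbauer's criterion, \cite{hofbauer:1981}). Your reduction of case~\eqref{it:heteroclinicB} to~\eqref{it:heteroclinicA} by interchanging the coordinates is a valid alternative to the paper's time reversal.

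The one point where your description goes astray is the location of the boundary equilibria. In the replicator picture, under \eqref{it:heteroclinicA} one has $\sgn c=(+,+,-,-)$, so $\partial S$ is the union of the four edges $\mathcal{F}_{13},\mathcal{F}_{32},\mathcal{F}_{24},\mathcal{F}_{41}$ of the tetrahedron, and the equilibria on $\partial S$ are the four \emph{corners} $E_1,E_2,E_3,E_4$ --- one for each vertex $P_i$, not one for each edge of the quadrangle. The heteroclinic cycle is $E_1\to E_3\to E_2\to E_4\to E_1$, and the eigenvalues at $E_i$ restricted to $\overline S$ are read off directly from the matrix $A$ in \eqref{eq:A}: for instance at $E_4$ they are $b_4-b_1$ (outgoing, toward $E_1$) and $b_4-b_2$ (incoming, from $E_2$). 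Your heuristic that the saddle attached to the edge $P_iP_j$ has eigenvalues given by the coordinate increments of $P_j-P_i$ is not correct as stated (for the edge $P_1P_3$ both increments $a_3-a_1$ and $b_3-b_1$ are nonnegative under \eqref{it:heteroclinicA}, so neither is contracting), even though your final products happen to match $L_\infty$. Working directly in the replicator coordinates, as the paper does, bypasses the blow-up difficulties you flag and makes the eigenvalue bookkeeping a one-line read from $A$; no separate average-Lyapunov argument is needed beyond citing \cite[Theorem~3]{hofbauer:1981}.
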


\begin{theorem}\label{thm:oppositesign}
Assume that $J_{11}J_{22}<0$ (i.e., $(a_1-a_2)(b_3-b_4)<0$) and at least one of the two conditions
\begin{align*}
\min(a_3,a_4) \leq \min(a_1,a_2) &\leq \max(a_1,a_2) \leq \max(a_3,a_4) \text{ and}\\
\min(b_1,b_2) \leq \min(b_3,b_4) &\leq \max(b_3,b_4) \leq \max(b_1,b_2)
\end{align*}
is violated. Then the ODE~\eqref{eq:ode_exp} is permanent if and only if $\det J > 0$ and one of (C1), (C2), (C3), (C4) below holds.
\begin{itemize}
\item[(C1)] $\sgn J = \begin{pmatrix} + & - \\ + & - \end{pmatrix}$, $a_4 \leq a_2 < a_1 \leq a_3$, and either of (C1a), (C1b), or (C1c) below holds
\begin{itemize}
\item[(C1a)] $\sgn(c_3,c_4) = (-,-)$
\item[(C1b)] $\sgn c = (+,-,-,0)$ or $\sgn c = (-,+,0,-)$ and 
\begin{align*}
-\frac{a_1-a_2}{b_1-b_2} < \frac{(L+1)^{L+1}}{L^L},
\text{ where } L = \begin{cases} \frac{c_2}{c_3}, & \text{ if } \sgn c = (+,-,-,0), \\ \frac{c_1}{c_4}, & \text{ if } \sgn c = (-,+,0,-)\end{cases}
\end{align*}
\item[(C1c)] $\sgn c = (+,0,-,0)$ or $\sgn c = (0,+,0,-)$ and
\begin{align*}
-\frac{a_1-a_2}{b_1-b_2} \leq 1 \text{ and } \tr J < 0
\end{align*}
\end{itemize}
\item[(C2)] $\sgn J = \begin{pmatrix} + & + \\ - & - \end{pmatrix}$, $a_3 \leq a_2 < a_1 \leq a_4$, and either of (C2a), (C2b), or (C2c) below holds
\begin{itemize}
\item[(C2a)] $\sgn(c_3,c_4) = (-,-)$
\item[(C2b)] $\sgn c = (-,+,-,0)$ or $\sgn c = (+,-,0,-)$ and 
\begin{align*}
\frac{a_1-a_2}{b_1-b_2} < \frac{(L+1)^{L+1}}{L^L},
\text{ where } L = \begin{cases} \frac{c_1}{c_3}, & \text{ if } \sgn c = (-,+,-,0), \\ \frac{c_2}{c_4}, & \text{ if } \sgn c = (+,-,0,-)\end{cases}
\end{align*}
\item[(C2c)] $\sgn c = (0,+,-,0)$ or $\sgn c = (+,0,0,-)$ and
\begin{align*}
\frac{a_1-a_2}{b_1-b_2} \leq 1 \text{ and } \tr J < 0
\end{align*}
\end{itemize}
\item[(C3)] $\sgn J = \begin{pmatrix} - & - \\ + & + \end{pmatrix}$, $b_1 \leq b_4 < b_3 \leq b_2$, and either of (C3a), (C3b), or (C3c) below holds
\begin{itemize}
\item[(C3a)] $\sgn(c_1,c_2) = (+,+)$
\item[(C3b)] $\sgn c = (0,+,-,+)$ or $\sgn c = (+,0,+,-)$ and 
\begin{align*}
\frac{b_3-b_4}{a_3-a_4} < \frac{(L+1)^{L+1}}{L^L},
\text{ where } L = \begin{cases} \frac{c_4}{c_2}, & \text{ if } \sgn c = (0,+,-,+), \\ \frac{c_3}{c_1}, & \text{ if } \sgn c = (+,0,+,-)\end{cases}
\end{align*}
\item[(C3c)] $\sgn c = (0,+,-,0)$ or $\sgn c = (+,0,0,-)$ and
\begin{align*}
\frac{b_3-b_4}{a_3-a_4} \leq 1 \text{ and } \tr J < 0
\end{align*}
\end{itemize}
\item[(C4)] $\sgn J = \begin{pmatrix} - & + \\ - & + \end{pmatrix}$, $b_2 \leq b_4 < b_3 \leq b_1$, and either of (C4a), (C4b), or (C4c) below holds
\begin{itemize}
\item[(C4a)] $\sgn(c_1,c_2) = (+,+)$
\item[(C4b)] $\sgn c = (0,+,+,-)$ or $\sgn c = (+,0,-,+)$ and 
\begin{align*}
-\frac{b_3-b_4}{a_3-a_4} < \frac{(L+1)^{L+1}}{L^L},
\text{ where } L = \begin{cases} \frac{c_3}{c_2}, & \text{ if } \sgn c = (0,+,+,-), \\ \frac{c_4}{c_1}, & \text{ if } \sgn c = (+,0,-,+)\end{cases}
\end{align*}
\item[(C4c)] $\sgn c = (0,+,0,-)$ or $\sgn c = (+,0,-,0)$ and
\begin{align*}
-\frac{b_3-b_4}{a_3-a_4} \leq 1 \text{ and } \tr J < 0
\end{align*}
\end{itemize}
\end{itemize}
\end{theorem}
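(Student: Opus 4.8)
The plan is to reduce permanence of the planar system \eqref{eq:ode_exp} to permanence of a replicator equation on the three-dimensional simplex, as set up in Section \ref{sec:S2R}. Writing $m_i=\e^{a_iu+b_iv}$ for $i=1,2,3,4$, a direct differentiation gives $\dot m_i=m_i\bigl(a_i(m_1-m_2)+b_i(m_3-m_4)\bigr)$, so the four monomials obey a homogeneous Lotka--Volterra system on $\mathbb R^4_+$ with coefficient matrix $B$ whose $i$-th row is $(a_i,-a_i,b_i,-b_i)$. Since the velocity of $\log m$ always lies in the two-dimensional subspace spanned by $(a_1,a_2,a_3,a_4)$ and $(b_1,b_2,b_3,b_4)$, the system \eqref{eq:ode_exp} is exactly this Lotka--Volterra flow restricted to an invariant plane; radial projection onto the simplex $S_4$ turns it into the restriction of a replicator equation to a two-dimensional invariant surface $\Sigma$. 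As $(u,v)\to\infty$ in a direction $\theta$, the coordinate $p_i$ survives iff $P_i$ is extremal in direction $\theta$, so $\Sigma$ accumulates on exactly those vertices and edges of $\partial S_4$ that correspond to the convex hull of $P_1,P_2,P_3,P_4$. Thus the relative boundary of $\Sigma$, and with it the entire escape-to-infinity behaviour, is governed by the convex position of the four points, which is precisely what the sign pattern of $c=(c_1,c_2,c_3,c_4)$ records. Permanence of \eqref{eq:ode_exp} is therefore equivalent to permanence of the replicator flow on $\Sigma$, i.e. to orbits staying off $\partial\Sigma$.

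With this dictionary in place I would exploit symmetry: the four Jacobian sign patterns (C1)--(C4) are permuted into one another by the reflections and variable swaps of the dihedral group acting on the configuration $P_1,P_2,P_3,P_4$, so it suffices to treat one representative, say (C1), and transport the conclusion. For (C1) the sign condition on $J$ together with the ordering $a_4\le a_2<a_1\le a_3$ (and its $b$-counterpart) fixes the orientation of the one-dimensional connections between the boundary rest points of $\Sigma$, producing a heteroclinic structure on $\partial\Sigma$; permanence holds exactly when this structure is repelling. To decide this I would use the average Lyapunov function method for replicator systems: seek weights $w_i\ge 0$ so that $P=\prod_i p_i^{w_i}$ satisfies $\dot P/P=\sum_i w_i\bigl((Bp)_i-p\cdot Bp\bigr)>0$ along all of $\partial\Sigma$. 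By the standard reduction this amounts to positivity at the finitely many boundary rest points, and for the full cycle to a single product-of-transverse-eigenvalue-ratios condition; the transverse eigenvalues are affine in the $a_i,b_i$ and their ratios are precisely the area ratios built from the $c_i$.

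The three subcases then correspond to decreasing convex nondegeneracy. In (C1a), where $\sgn(c_3,c_4)=(-,-)$, the four points are in the position that makes every active transverse eigenvalue carry the interior-attracting sign, so the weights exist unconditionally and permanence holds with no magnitude restriction; here the heteroclinic quantity $L_\infty$ of Theorem \ref{thm:heteroclinic} has degenerated. In (C1b) one of the $c_i$ vanishes ($P_1$ lies on an edge), two boundary rest points merge into a single connection, and the weight-balance equations force the ratio $L=c_2/c_3$ (respectively $c_1/c_4$); optimizing the weighted average along the connecting orbit yields the extremal constant $\frac{(L+1)^{L+1}}{L^L}$, which must dominate the slope $-\frac{a_1-a_2}{b_1-b_2}$ for the cycle to be repelling. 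In (C1c) two of the $c_i$ vanish ($P_1$ coincides with $P_3$), the transverse eigenvalue itself vanishes and the linear criterion is inconclusive, so one must pass to the nonlinear normal form at the degenerate boundary rest point; the first focal quantity decides, which is why the condition becomes $\tr J<0$ together with the boundedness inequality $-\frac{a_1-a_2}{b_1-b_2}\le 1$. Transporting all three subcases through the dihedral symmetry produces (C2)--(C4).

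I expect the main obstacle to be the necessity direction in tandem with the degenerate subcases (C1b) and (C1c); the necessity of $\det J>0$ itself is immediate from Lemma \ref{lem:perm_implies_det>0}. Sufficiency is the comparatively clean production of the average Lyapunov function once the boundary rest points and their eigenvalues are catalogued, but proving non-permanence when the inequality is reversed requires exhibiting an attracting boundary structure, and exactly at the degenerations the transverse eigenvalue vanishes, so the decisive information sits in the nonlinear terms (hence the appearance of $\tr J$ and of the extremal constant $\frac{(L+1)^{L+1}}{L^L}$ rather than a bare eigenvalue sign). Verifying that the orderings in each of (C1)--(C4) really do generate the claimed cycle orientation, and bookkeeping which pair of points bounds the active face in each sign pattern of $c$, is the combinatorial labor that the symmetry reduction is designed to contain.
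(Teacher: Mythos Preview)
Your overall architecture---embed into the replicator equation on $\Delta_4$, restrict to the invariant surface $S=\{Q=1\}$, and use the dihedral $D_4$ symmetry to reduce (C2)--(C4) to (C1)---is exactly what the paper does. The description of $\partial S$ in terms of the convex position of $P_1,\dots,P_4$ (encoded by $\sgn c$) is also on target.

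The genuine gaps are in your mechanisms for the degenerate subcases. For (C1b), say $\sgn c=(+,-,-,0)$, the constant $\frac{(L+1)^{L+1}}{L^L}$ does \emph{not} arise from ``optimizing a weighted time-average along a connecting orbit.'' What actually happens is that $c_4=0$ forces the facet $\mathcal{F}_{123}$ to contain an entire line segment $\mathcal{E}$ of equilibria (from $E_{12}$ to $E_{13}$), while $\partial S$ contains a curve $\mathcal{C}_{23}^1=\{x_4=0,\ x_1^{|c_1|}=x_2^{|c_2|}x_3^{|c_3|}\}$ in that same facet. Non-permanence occurs precisely when $\mathcal{E}\cap\mathcal{C}_{23}^1\neq\emptyset$, because any such intersection point is a boundary equilibrium whose transverse eigenvalue toward $E_4$ is negative. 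The inequality in (C1b) is the condition for $\mathcal{E}$ and $\mathcal{C}_{23}^1$ to be disjoint, obtained by minimizing $c_1\log x_1+c_2\log x_2+c_3\log x_3$ along the parametrized segment $\mathcal{E}$; the minimum evaluates to exactly the logarithm of the stated ratio. An average-Lyapunov-function search over the vertex rest points will not detect this, since the obstruction is a \emph{continuum} of potential equilibria on a facet, not eigenvalue data at finitely many corners.

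For (C1c), say $\sgn c=(+,0,-,0)$, your ``first focal quantity'' picture is the wrong mechanism. Here $P_1=P_3$, so $S$ is the \emph{flat} triangle $\{x_1=x_3\}$ and the restricted flow is a genuine three-strategy replicator equation with vertices $E_m=\tfrac12E_1+\tfrac12E_3$, $E_2$, $E_4$. The inequality $-\frac{a_1-a_2}{b_1-b_2}\le 1$ is what prevents $E_m$ from being an asymptotically stable sink on $\partial S$; once that holds, the remaining obstruction is a rock--paper--scissors heteroclinic cycle on $\partial S$, whose repelling/attracting character is decided by the sign of the determinant of the $3\times 3$ payoff matrix. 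That determinant equals $-\tfrac{c_1}{2}\tr J$, which is why $\tr J<0$ appears. No center-manifold normal form or Hopf-type focal value is involved.

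Finally, even in the nondegenerate case (C1a) the paper does not build an average Lyapunov function; it invokes the Butler--Waltman permanence criterion after checking (via saturation/eigenvalue and center-manifold arguments) that no boundary equilibrium of $\overline S$ attracts an interior orbit and that $\partial S$ carries no heteroclinic cycle. Your Lyapunov approach might be made to work there, but you should be aware that the boundary rest-point set is larger than you indicate (it includes edge equilibria such as $E_{34}$ with two zero transverse eigenvalues, handled in the paper by a center-manifold argument), so ``positivity at finitely many corners'' is not enough as stated.
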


With Theorems \ref{thm:samesign}, \ref{thm:heteroclinic}, \ref{thm:oppositesign}, permanence is characterized except in the marginal case $L_\infty = 0$ in Theorem \ref{thm:heteroclinic}. The solution of this remaining case requires more sophisticated techniques.

By \emph{robust permanence} of the ODE~\eqref{eq:ode_exp}, we mean that the ODE remains permanent after small perturbation of the eight parameters $a_i$ and $b_i$ (for $i=1,2,3,4$). The following corollary is a characterization of robust permanence, it is an immediate consequence of the above three theorems.

\begin{corollary}\label{cor:robust_permanence}
The ODE~\eqref{eq:ode_exp} is robustly permanent if and only if $\det J > 0$ and one of (A), (B1), (B2), (B3), (B4), (C1), (C2), (C3), (C4) below holds. (The number $L_\infty$ below in (C1), (C2), (C3), (C4) is defined as in the corresponding case in Theorem \ref{thm:heteroclinic}.)
\begin{itemize}
\item[(A)] $\sgn J = \begin{pmatrix} - & * \\ * & - \end{pmatrix}$
\item[(B1)] $\sgn J = \begin{pmatrix} 0 & - \\ + & - \end{pmatrix}$ and $a_4 < a_2 = a_1 < a_3$
\item[(B2)] $\sgn J = \begin{pmatrix} 0 & + \\ - & - \end{pmatrix}$ and $a_3 < a_2 = a_1 < a_4$
\item[(B3)] $\sgn J = \begin{pmatrix} - & + \\ - & 0 \end{pmatrix}$ and $b_2 < b_4 = b_3 < b_1$
\item[(B4)] $\sgn J = \begin{pmatrix} - & - \\ + & 0 \end{pmatrix}$ and $b_1 < b_4 = b_3 < b_2$
\item[(C1)] $\sgn J = \begin{pmatrix} + & - \\ + & - \end{pmatrix}$, $a_4 < a_2 < a_1 < a_3$, $\sgn(c_3,c_4) = (-,-)$, \\ and if $b_1 < b_3 < b_4 < b_2$ then $L_\infty > 0$
\item[(C2)] $\sgn J = \begin{pmatrix} + & + \\ - & - \end{pmatrix}$, $a_3 < a_2 < a_1 < a_4$, $\sgn(c_3,c_4) = (-,-)$, \\ and if $b_2 < b_3 < b_4 < b_1$ then $L_\infty > 0$
\item[(C3)] $\sgn J = \begin{pmatrix} - & - \\ + & + \end{pmatrix}$, $b_1 < b_4 < b_3 < b_2$, $\sgn(c_1,c_2) = (+,+)$, \\ and if $a_4 < a_1 < a_2 < a_3$ then $L_\infty > 0$
\item[(C4)] $\sgn J = \begin{pmatrix} - & + \\ - & + \end{pmatrix}$, $b_2 < b_4 < b_3 < b_1$, $\sgn(c_1,c_2) = (+,+)$, \\ and if $a_3 < a_1 < a_2 < a_4$ then $L_\infty > 0$
\end{itemize}
\end{corollary}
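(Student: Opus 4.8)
The plan is to use that robust permanence at a parameter point $p=(a_1,\dots,a_4,b_1,\dots,b_4)\in\mathbb{R}^8$ means exactly that some neighborhood of $p$ consists entirely of permanent systems; that is, $p$ lies in the interior of the permanent set $\mathcal{P}\subseteq\mathbb{R}^8$. The corollary therefore amounts to reading off $\operatorname{int}\mathcal{P}$ from the descriptions of $\mathcal{P}$ given by Theorems \ref{thm:samesign}, \ref{thm:heteroclinic}, and \ref{thm:oppositesign}. First I would note that $\det J>0$ is necessary for permanence (Lemma \ref{lem:perm_implies_det>0}) and, being an open condition, is both inherited by every point of $\operatorname{int}\mathcal{P}$ and preserved under perturbation; hence it may be assumed throughout.

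Next I would stratify parameter space by the sign pattern of $J$ and by the relative orderings of $a_1,\dots,a_4$ and of $b_1,\dots,b_4$. On the open chambers, where the four entries of $J$ are nonzero and all relevant inequalities are strict, the three theorems determine permanence directly. For $\sgn J=\begin{pmatrix}-&*\\ *&-\end{pmatrix}$, Theorem \ref{thm:samesign} yields permanence for every $\det J>0$, so this chamber is open and wholly permanent, giving (A). For the opposite-sign patterns I would split according to the two nesting conditions of Theorem \ref{thm:oppositesign}: in pattern (C1) with $a_4<a_2<a_1<a_3$ the first nesting condition holds, so Theorem \ref{thm:oppositesign} applies exactly when the second nesting $b_1<b_3<b_4<b_2$ fails, in which case permanence reduces to the open condition $\sgn(c_3,c_4)=(-,-)$ (the zeros in (C1b), (C1c) being impossible on an open chamber). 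When instead $b_1<b_3<b_4<b_2$ holds we are in case (A) of Theorem \ref{thm:heteroclinic}, where permanence is equivalent to the strict inequality $L_\infty>0$; since $L_\infty=0$ is a non-permanence boundary it contributes nothing to $\operatorname{int}\mathcal{P}$. Assembling these two sub-cases gives the single clause of (C1), and (C2), (C3), (C4) follow from the dihedral symmetry permuting the points $P_i$, so I would treat only (C1) explicitly.

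The conditions (B1)--(B4) arise from the codimension-one walls $J_{11}=0$ (that is, $a_1=a_2$) and $J_{22}=0$ (that is, $b_3=b_4$) that lie inside $\mathcal{P}$. A point of the wall $\{a_1=a_2\}$ lies in $\operatorname{int}\mathcal{P}$ iff it is permanent and both neighbouring chambers are permanent nearby: on the side $a_1<a_2$ the pattern is (A), automatically permanent, while on the side $a_1>a_2$ the pattern is (C1). Here the relevant quantities simplify on the wall: a short computation shows that under $\sgn J=\begin{pmatrix}0&-\\ +&-\end{pmatrix}$ with $a_4<a_1=a_2<a_3$ one has $\sgn(c_3,c_4)=(-,-)$ automatically, and the heteroclinic quantity factorizes as $L_\infty=(a_3-a_1)(a_1-a_4)(b_4-b_3)(b_2-b_1)>0$. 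Hence no extra condition is needed and the whole wall lies in the interior, which is exactly (B1); (B2), (B3), (B4) are the symmetric walls. To finish I would verify completeness, checking that every other lower-dimensional stratum---the walls $c_i=0$ appearing in (C1b), (C1c), the non-strict orderings, and the boundary orderings of Theorem \ref{thm:heteroclinic}---borders a non-permanent region or forces incompatible strict requirements, and so adds nothing to $\operatorname{int}\mathcal{P}$.

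I expect the principal obstacle to be the completeness bookkeeping: confirming that the list (A), (B1)--(B4), (C1)--(C4) is exhaustive and that the marginal case $L_\infty=0$, left open by Theorem \ref{thm:heteroclinic}, never enters the interior. The most delicate point is the interface between Theorems \ref{thm:oppositesign} and \ref{thm:heteroclinic}: one must check that as the $b$-ordering crosses into the nesting regime the permanence criterion passes continuously from $\sgn(c_3,c_4)=(-,-)$ to $L_\infty>0$, so that the single conditional clause in (C1) describes $\operatorname{int}\mathcal{P}$ correctly across this transition.
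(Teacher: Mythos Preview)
Your proposal is correct and follows the only natural route: since the paper states the corollary as ``an immediate consequence of the above three theorems'' without further argument, your plan of identifying robust permanence with the interior of the permanent set $\mathcal{P}\subseteq\mathbb{R}^8$ and reading off $\operatorname{int}\mathcal{P}$ from Theorems \ref{thm:samesign}, \ref{thm:heteroclinic}, \ref{thm:oppositesign} is exactly what is intended. Your explicit verifications---that $\sgn(c_3,c_4)=(-,-)$ and $L_\infty>0$ hold automatically on the wall (B1), and that the $L_\infty=0$ stratum borders non-permanent points---supply the bookkeeping the paper omits.
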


\section{From S-systems to replicator systems} \label{sec:S2R}

To prove Theorems \ref{thm:heteroclinic} and \ref{thm:oppositesign}, we embed the two-dimensional ODE~\eqref{eq:ode_exp} into a four-dimensional Lotka--Volterra system. Let $z_i = \e^{a_i u + b_i v}$ for $i = 1, 2, 3, 4$. Then 
\begin{align} \label{eq:ode_LV}
\dot z_i = z_i \left[a_i \dot u + b_i \dot v\right] = z_i \left[ a_i (z_1 - z_2) + b_i (z_3 - z_4)\right] \text{ for } i = 1, 2, 3, 4,
\end{align}
which is a 4-dimensional Lotka--Volterra system with matrix
\begin{align*}
\widetilde{A} = \begin{pmatrix}
a_1 & -a_1 & b_1 & - b_1 \\
a_2 & -a_2 & b_2 & - b_2 \\
a_3 & -a_3 & b_3 & - b_3 \\
a_4 & -a_4 & b_4 & - b_4 
\end{pmatrix}.
\end{align*}

Since the ODE~\eqref{eq:ode_LV} is homogeneous, we can reduce the dimension by projecting the dynamics to the 3-dimensional simplex 
\begin{align*}
\Delta_4 = \left\{ x \in \mathbb{R}^4_{\geq0} ~|~ x_1 + x_2 + x_3 + x_4 = 1\right\}.
\end{align*}
Let
\begin{align*}
x_i = \frac{z_i}{z_1 + z_2 + z_3 + z_4} \text{ for } i = 1, 2, 3, 4.
\end{align*}
Then, after division by the positive factor $z_1+z_2+z_3+z_4$, the ODE~\eqref{eq:ode_LV} leads to
\begin{align} \label{eq:ode_Atilde}
\dot x_i = x_i \left[ (\widetilde{A}x)_i - x^\mathsf{T} \widetilde{A} x\right] \text{ for } i = 1, 2, 3, 4
\end{align}
on the simplex $\Delta_4$, the replicator equation with matrix $\widetilde{A}$, see \cite{HS88,HS98}. Adding any multiple of $\mathbf{1}=(1,1,1,1)^\mathsf{T}$ to any column of $\widetilde{A}$ leaves the ODE~\eqref{eq:ode_Atilde} unchanged, therefore, we can replace the ODE~\eqref{eq:ode_Atilde} by the replicator equation on the simplex $\Delta_4$ with matrix
\begin{align} \label{eq:A}
A = \begin{pmatrix} 
0 & a_2-a_1 & b_1-b_3 &  b_4- b_1 \\
a_2-a_1 & 0 & b_2-b_3 &  b_4- b_2 \\
a_3-a_1 & a_2-a_3 & 0  &  b_4- b_3 \\
a_4-a_1 & a_2-a_4 & b_4-b_3 & 0 
\end{pmatrix},
\end{align}
i.e.,
\begin{align} \label{eq:ode_A}
\dot x_i = x_i \left[ (Ax)_i - x^\mathsf{T} A x\right] \text{ for } i = 1, 2, 3, 4.
\end{align}

Note that besides the corners $E_1$, $E_2$, $E_3$, $E_4$ of $\Delta_4$, the points $E_{12} = \left(\frac12,\frac12,0,0\right)$ and $E_{34} = \left(0,0,\frac12,\frac12\right)$ are equilibria of the ODE~\eqref{eq:ode_A}. Also, each point on the line segment connecting $E_{12}$ and $E_{34}$ is an equilibrium. Further, a short calculation shows that $\det J \neq 0$ implies that there is no other equilibrium in $\Delta_4^\circ$, the relative interior of $\Delta_4$.

Let $c \in \mathbb{R}^4$ be as in \eqref{eq:c_Delta}, or explicitly,
\begin{align} \label{eq:c}
\begin{split}
c_1 &= a_3 b_2 - a_2 b_3 + a_2 b_4 - a_4 b_2 + a_4 b_3 - a_3 b_4, \\
c_2 &= a_1 b_3 - a_3 b_1 + a_4 b_1 - a_1 b_4 + a_3 b_4 - a_4 b_3, \\
c_3 &= a_2 b_1 - a_1 b_2 + a_1 b_4 - a_4 b_1 + a_4 b_2 - a_2 b_4, \\
c_4 &= a_1 b_2 - a_2 b_1 + a_3 b_1 - a_1 b_3 + a_2 b_3 - a_3 b_2.
\end{split}
\end{align}
Note that
\begin{align}
\label{eq:c1_c2_detJ} c_1 + c_2 &= +\det J, \\
\label{eq:c3_c4_detJ} c_3 + c_4 &= -\det J
\end{align}
and $c$ is perpendicular to the three vectors $a$, $b$, and $\mathbf{1}$ (a short calculation shows that $\det J \neq 0$ implies that $a$, $b$, and $\mathbf{1}$ are linearly independent). Then $c^\mathsf{T} A = 0$ and hence the function $Q:\Delta_4^\circ\to\mathbb{R}_+$ defined by $Q(x) = \prod_{i=1}^4 x_i^{c_i}$ is a constant of motion for the ODE~\eqref{eq:ode_A}. Indeed, by the choice of $c$,
\begin{align*}
(Q(x))^\cdot=Q(x)\sum_{i=1}^4 c_i\left[(Ax)_i - x^\mathsf{T} Ax\right]= Q(x)\left[c^\mathsf{T} Ax - x^\mathsf{T} Ax \sum_{i=1}^4 c_i\right] = 0.
\end{align*}
The planar S-system \eqref{eq:ode_exp} corresponds to the restriction of the ODE~\eqref{eq:ode_A} to the surface $\{Q = 1\}$. (The origin of the $(u,v)$-plane is mapped to $z = \mathbf{1}$ and $x = \frac{1}{4} \mathbf{1}$. Further, since $\sum_{i=1}^4 c_i = 0$, we have $Q\left(\frac{1}{4} \mathbf{1}\right) = 1$. See Figure~\ref{fig:int_eq} for an illustration.) The shape of the surface $\{Q = 1\}$ depends on the sign pattern of $c$. Let
\begin{align*}
\overline{S} &= \Delta_4 \cap \{Q=1\}, \\
S            &= \Delta_4^\circ \cap \{Q=1\}, \\
\partial S   &= \overline{S} \setminus S.
\end{align*}

\begin{figure}
\begin{center}
\begin{tikzpicture}[radius=0.05]
\coordinate (E1)  at (0   ,0   );
\coordinate (E2)  at (3   ,0   );
\coordinate (E3)  at (1.5 ,2.4 );
\coordinate (E4)  at (1.5*1.25 ,0.72*1.25);
\coordinate (E12) at (1.5 ,0   );
\coordinate (E34) at (1.5*1.125 ,1.65);
\coordinate (E1234) at (1.5*1.0625,0.825);
\draw [semithick] (E1) -- (E2);
\draw [semithick] (E3) -- (E4);
\draw [semithick] (E1) -- (E4);
\draw [semithick] (E4) -- (E2);
\draw [semithick] (E2) -- (E3);
\draw [semithick] (E3) -- (E1);
\draw [blue,semithick] (E12) -- (E34);
\node [below left]  at (E1) {\small $E_1$};
\node [below right] at (E2) {\small $E_2$};
\node [above]       at (E3) {\small $E_3$};
\node [right] at (E4) {\small $E_4$};
\node [below]       at (E12) {\small $E_{12}$};
\node [left]        at (E34) {\small $E_{34}$};
\node [left]        at (E1234) {\small $\frac{1}{4} \mathbf{1}$};
\draw [blue,fill] (E12) circle;
\draw [blue,fill] (E34) circle;
\draw [blue,fill] (E1234) circle;
\end{tikzpicture}
\end{center}
\caption{Each point on the line segment connecting $E_{12}$ and $E_{34}$ is an equilibrium. The planar S-system \eqref{eq:ode_exp} and the origin of the $(u,v)$-plane correspond to the surface $\prod_{i=1}^4 x_i^{c_i}=1$ and $x = \frac{1}{4} \mathbf{1}$ (the midpoint between $E_{12}$ and $E_{34}$), respectively.}
\label{fig:int_eq}
\end{figure}
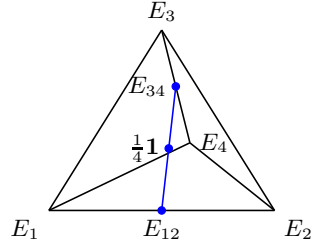

Note that the planar S-system \eqref{eq:ode_exp} is permanent if and only if the restriction of the ODE~\eqref{eq:ode_A} to the surface $\overline{S}$ is \emph{permanent}, meaning there exists a compact subset of $S$ that is forward invariant and attracts every point in $S$ (or, equivalently, $\partial S$ is a repeller in $\overline S$).

We illustrate the usefulness of this embedding by proving Theorem \ref{thm:heteroclinic}. We state and prove here case (A). Case (B) follows from case (A) by time reversal of the ODE~\eqref{eq:ode_exp}, which swaps $P_1$ with $P_2$ and $P_3$ with $P_4$.

\begin{lemma}
Assume that $\det J > 0$ and further that
\begin{align*}
\sgn J = \begin{pmatrix}*&-\\+&*\end{pmatrix} \text{ and }\left\{\begin{array}{l}
a_4 \leq \min(a_1,a_2) \leq \max(a_1,a_2) \leq a_3 \\
b_1 \leq \min(b_3,b_4) \leq \max(b_3,b_4) \leq b_2
\end{array}.\right.
\end{align*}
Let
\begin{align*}
L_\infty = (a_3 - a_1)(b_2 - b_3)(a_2 - a_4)(b_4 - b_1) - (b_1 - b_3)(a_2 - a_3)(b_4 - b_2)(a_4 - a_1).
\end{align*}
Then the following two statements hold.
\begin{enumerate}[(i)]
\item If $L_\infty > 0$ then the ODE~\eqref{eq:ode_exp} is permanent.
\item If $L_\infty < 0$ then the ODE~\eqref{eq:ode_exp} is not permanent (orbits with large initial conditions spiral outwards towards infinity).
\end{enumerate}
\end{lemma}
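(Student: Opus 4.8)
The plan is to exploit the embedding into the replicator equation~\eqref{eq:ode_A} and to read off the behaviour of~\eqref{eq:ode_exp} near infinity as the stability of a heteroclinic cycle on the boundary of the surface $\overline S$. First I would pin down the shape of $\overline S$. Under the stated orderings $a_4\le\min(a_1,a_2)\le\max(a_1,a_2)\le a_3$ and $b_1\le\min(b_3,b_4)\le\max(b_3,b_4)\le b_2$ the four points sit in convex position with $P_1$ lowest, $P_2$ highest, $P_3$ rightmost, $P_4$ leftmost; hence $\sgn c=(+,+,-,-)$ (case~(i) of Section~\ref{sec:planar_s_systems}) and $\overline S$ is a topological disk. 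Its boundary $\partial S$ is the union of the four simplex edges $E_1E_3$, $E_3E_2$, $E_2E_4$, $E_4E_1$ together with the corners $E_1,E_3,E_2,E_4$: as $(u,v)\to\infty$ along a ray, the coordinate $x_i$ whose point $P_i$ is maximal in that direction tends to $1$, and since the angular order of the hull vertices is $P_1,P_3,P_2,P_4$, the limit set on $\partial\Delta_4$ is exactly these edges and corners.

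Next I would verify that these edges carry a heteroclinic cycle $E_1\to E_3\to E_2\to E_4\to E_1$. The transverse eigenvalue of~\eqref{eq:ode_A} at $E_k$ in the direction of $E_i$ equals $A_{ik}$ (the diagonal of $A$ is zero). The hypotheses give the signs: the outgoing (expanding) rates $A_{31}=a_3-a_1$, $A_{23}=b_2-b_3$, $A_{42}=a_2-a_4$, $A_{14}=b_4-b_1$ are $\ge 0$, while the incoming (contracting) rates $A_{13}=b_1-b_3$, $A_{32}=a_2-a_3$, $A_{24}=b_4-b_2$, $A_{41}=a_4-a_1$ are $\le 0$. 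Near a corner $E_k$ the surface $\overline S$ is the graph of the third (slaved) coordinate over the two cycle-edge coordinates, with $Q=1$ fixing the slaved one to higher order, so each corner is a planar saddle whose two in-surface eigenvalues are precisely this expanding/contracting pair.

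Then I would apply the standard stability criterion for planar heteroclinic cycles via a saddle-passage / Poincar\'e-map computation as in \cite{HS88,HS98}: the passage near a saddle with contraction rate $\nu_k>0$ and expansion rate $\mu_k>0$ maps the distance $\delta$ to the cycle to $\mathrm{const}\cdot\delta^{\nu_k/\mu_k}$, so the return map is $\delta\mapsto\mathrm{const}\cdot\delta^{\rho}$ with
\begin{align*}
\rho=\frac{(b_3-b_1)(a_3-a_2)(b_2-b_4)(a_1-a_4)}{(b_2-b_3)(a_2-a_4)(b_4-b_1)(a_3-a_1)}.
\end{align*}
The cycle repels (so $\partial S$ is a repeller and~\eqref{eq:ode_exp} is permanent) when $\rho<1$, and it attracts (so orbits spiral out to infinity and the system is not permanent) when $\rho>1$. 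Writing $\rho=N/D$ with numerator $N=(b_3-b_1)(a_3-a_2)(b_2-b_4)(a_1-a_4)$ and denominator $D=(b_2-b_3)(a_2-a_4)(b_4-b_1)(a_3-a_1)$, the algebraic identity $L_\infty=D-N$ (the second product in $L_\infty$ equals $N$ after flipping all four signs) yields $\rho<1\iff L_\infty>0$ and $\rho>1\iff L_\infty<0$, which is exactly the claim.

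The main obstacle I expect is the rigorous global step rather than the eigenvalue bookkeeping: one must prove that $\partial S$ is precisely this cycle, i.e.\ that $\{Q=1\}$ limits onto exactly these four edges and corners of $\partial\Delta_4$ and nowhere else, and one must justify the saddle-passage asymptotics carefully — local linearization near each corner, control of the transition maps along the connections, and the fact that the slaved third coordinate contributes no extra eigenvalue to the in-surface flow. Degenerate sub-cases, where an ordering inequality is an equality and some $\nu_k$ or $\mu_k$ vanishes, would need separate attention (or a continuity argument from the strict case, which corresponds to the robust situation recorded in Corollary~\ref{cor:robust_permanence}).
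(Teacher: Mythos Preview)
Your proposal is correct and follows essentially the same route as the paper: identify $\sgn c=(+,+,-,-)$, read off $\partial S$ as the four edges $\mathcal{F}_{13},\mathcal{F}_{32},\mathcal{F}_{24},\mathcal{F}_{41}$ forming a heteroclinic cycle $E_1\to E_3\to E_2\to E_4\to E_1$, and apply the product-of-eigenvalues criterion for its stability. The only cosmetic differences are that the paper obtains $\partial S$ directly from the form $x_1^{|c_1|}x_2^{|c_2|}=x_3^{|c_3|}x_4^{|c_4|}$ rather than your ray-limit argument, and it invokes \cite[Theorem~3]{hofbauer:1981} in place of your explicit return-map computation $\rho=N/D$; the degenerate boundary cases you worry about are handled simply by noting that $L_\infty\neq 0$ forces the four points $P_i$ to be distinct.
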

\begin{proof}
First note that $L_\infty \neq 0$ implies that all the four points $P_1$, $P_2$, $P_3$, $P_4$ are distinct. Further, the assumptions $a_4 \leq \min(a_1,a_2) \leq \max(a_1,a_2) \leq a_3$ and $b_1 \leq \min(b_3,b_4) \leq \max(b_3,b_4) \leq b_2$ yield $\sgn c = (+,+,-,-)$, see \eqref{eq:c_Delta}. Then the surface $S$ is given by $\frac{x_1^{|c_1|}x_2^{|c_2|}}{x_3^{|c_3|}x_4^{|c_4|}}=1$ and thus, $\partial S$ consists of the four edges $\mathcal{F}_{13}$, $\mathcal{F}_{32}$, $\mathcal{F}_{24}$, $\mathcal{F}_{41}$, see Figure~\ref{fig:partialS_heteroclinic}.
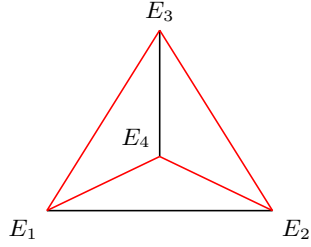
\begin{figure}
\begin{center}
\begin{tikzpicture}
\coordinate (E1)  at (0   ,0   );
\coordinate (E2)  at (3   ,0   );
\coordinate (E3)  at (1.5 ,2.4 );
\coordinate (E4)  at (1.5 ,0.72 );
\draw [semithick] (E1) -- (E2);
\draw [semithick] (E3) -- (E4);
\draw [red, semithick] (E1) -- (E4);
\draw [red, semithick] (E4) -- (E2);
\draw [red, semithick] (E2) -- (E3);
\draw [red, semithick] (E3) -- (E1);
\node [below left]  at (E1) {\small $E_1$};
\node [below right] at (E2) {\small $E_2$};
\node [above]       at (E3) {\small $E_3$};
\node [above left]  at (E4) {\small $E_4$};
\end{tikzpicture}
\end{center}
\caption{Under $\sgn c = (+,+,-,-)$, the set $\partial S$ consists of the four edges $\mathcal{F}_{13}$, $\mathcal{F}_{32}$, $\mathcal{F}_{24}$, $\mathcal{F}_{41}$.}
\label{fig:partialS_heteroclinic}
\end{figure}
There is no edge equilibrium and there is a heteroclinic cycle along $E_1$, $E_3$, $E_2$, $E_4$, $E_1$.
Indeed, on the surface $\overline{S}$, near $E_4$, the ODE~\eqref{eq:ode_A} is given by
\begin{align*}
\dot x_1 &= x_1 \left( b_4 - b_1+ f_1(x_1,x_2) \right), \\
\dot x_2 &= x_2 \left( b_4 - b_2 + f_2(x_1,x_2) \right) 
\end{align*}
with $b_4 - b_1 \geq 0$ and $b_4 - b_2 \leq 0$ (not both zero, because $b_1 < b_2$) being the eigenvalues at $E_4$ and $\lvert f_i(x_1,x_2) \rvert \to 0$ as $(x_1, x_2) \to (0,0)$. Similarly near the other corners. By \cite[Theorem 3]{hofbauer:1981}, this heteroclinic cycle is repelling if the product of the outgoing eigenvalues is larger than the product of the incoming eigenvalues along the cycle, i.e., if 
\begin{align*}
(a_3 - a_1)(b_2 - b_3)(a_2 - a_4)(b_4 - b_1) - (b_1 - b_3)(a_2 - a_3)(b_4 - b_2)(a_4 - a_1) > 0.
\end{align*}
Conversely, if $L_{\infty} < 0$ then the heteroclinic cycle is attracting.
\end{proof}

\section{Proof of Theorem \ref{thm:oppositesign}}\label{sec:proof_oppositesign}

This section is devoted to prove Theorem \ref{thm:oppositesign}. By Lemma \ref{lem:perm_implies_det>0}, permanence of the ODE~\eqref{eq:ode_exp} implies $\det J > 0$. Since $J_{11}J_{22}<0$ is assumed in Theorem \ref{thm:oppositesign}, all permanent systems fulfill $J_{12}J_{21}<0$. Thus, we are left with the four sign patterns
\begin{align*}
\begin{pmatrix}
+ & - \\ + & -
\end{pmatrix},
\begin{pmatrix}
+ & + \\ - & -
\end{pmatrix},
\begin{pmatrix}
- & - \\ + & +
\end{pmatrix},
\begin{pmatrix}
- & + \\ - & +
\end{pmatrix}
\end{align*}
for the Jacobian matrix $J$. As it is explained in \cite[Subsection 2.2]{boros:hofbauer:mueller:regensburger:2017b}, the family of ODEs~\eqref{eq:ode_exp} is invariant under the symmetry group of the square (i.e., the dihedral group $D_4$) which consists of the rotations $\mathbf{r}_0$ (by $0^\circ$), $\mathbf{r}_1$ (by $+90^\circ$), $\mathbf{r}_2$ (by $+180^\circ$), $\mathbf{r}_3$ (by $+270^\circ$) and the reflections $\mathbf{s}_0$ (along the $u$-axis), $\mathbf{s}_1$ (along the $u=v$ line), $\mathbf{s}_2$ (along the $v$-axis), $\mathbf{s}_3$ (along the $u = -v$ line). The list in \cite[Subsection 2.2]{boros:hofbauer:mueller:regensburger:2017b} reveals how the $a_i$ and $b_i$ are mapped under these eight operations. Figure~\ref{fig:plus_plus_minus_minus_patterns} illustrates how the above mentioned four sign patterns of $J$ are transformed into each other. The vector $(c_1,c_2,c_3,c_4)$ is mapped by the elements of the dihedral group $D_4$ as
\begin{align*}
\begin{aligned}
 (c_1,c_2,c_3,c_4) &\stackrel{\mathbf{r}_0}{\mapsto} (c_1,c_2,c_3,c_4),
&(c_1,c_2,c_3,c_4) &\stackrel{\mathbf{s}_0}{\mapsto} (c_1,c_2,c_4,c_3),\\
 (c_1,c_2,c_3,c_4) &\stackrel{\mathbf{r}_1}{\mapsto} (-c_4,-c_3,-c_1,-c_2),
&(c_1,c_2,c_3,c_4) &\stackrel{\mathbf{s}_1}{\mapsto} (-c_3,-c_4,-c_1,-c_2),\\
 (c_1,c_2,c_3,c_4) &\stackrel{\mathbf{r}_2}{\mapsto} (c_2,c_1,c_4,c_3),
&(c_1,c_2,c_3,c_4) &\stackrel{\mathbf{s}_2}{\mapsto} (c_2,c_1,c_3,c_4),\\
 (c_1,c_2,c_3,c_4) &\stackrel{\mathbf{r}_3}{\mapsto} (-c_3,-c_4,-c_2,-c_1),
&(c_1,c_2,c_3,c_4) &\stackrel{\mathbf{s}_3}{\mapsto} (-c_4,-c_3,-c_2,-c_1).\\
\end{aligned}
\end{align*}
Using these symmetries, once we prove case (C1) in Theorem \ref{thm:oppositesign}, the cases (C2), (C3), and (C4) follow by applying $\mathbf{s}_0$ or $\mathbf{s}_2$, $\mathbf{r}_1$ or $\mathbf{r}_3$, and $\mathbf{s}_1$ or $\mathbf{s}_3$, respectively.

Thus, from now on we mainly focus on characterizing permanence under $\sgn J = \begin{pmatrix}
+ & - \\ + & -
\end{pmatrix}$. Since we will make use of the rotation $\mathbf{r}_2$, we mention here that the ODE~\eqref{eq:ode_exp} is transformed by $\mathbf{r}_2$ into
\begin{align*}
\begin{split}
\dot u &= \e^{- a_2 u - b_2 v} - \e^{- a_1 u - b_1 v} , \\
\dot v &= \e^{- a_4 u - b_4 v} - \e^{- a_3 u - b_3 v}. 
\end{split}
\end{align*}
Further, the rotation $\mathbf{r}_2$ maps $(P_1,P_2,P_3,P_4)$ to $(-P_2,-P_1,-P_4,-P_3)$, hence, the tuple $(c_1,c_2,c_3,c_4)$ goes to the tuple $(c_2,c_1,c_4,c_3)$ (as already listed above).

\begin{figure}

\begin{center}

\begin{tikzpicture}[auto]

\node (A) at ( 4, 0) {$\begin{pmatrix} + & + \\ - & - \end{pmatrix}$};
\node (B) at ( 0, 0) {$\begin{pmatrix} + & - \\ + & - \end{pmatrix}$}; 
\node (C) at ( 0,-4) {$\begin{pmatrix} - & + \\ - & + \end{pmatrix}$};
\node (D) at ( 4,-4) {$\begin{pmatrix} - & - \\ + & + \end{pmatrix}$};

\draw[arrows={triangle 45-triangle 45}] (A) to node [above] {$\mathbf{s}_0, \mathbf{s}_2$} (B);
\draw[arrows={triangle 45-triangle 45}] (A) to node [near start] {$\mathbf{r}_1, \mathbf{r}_3$} (C);
\draw[arrows={triangle 45-triangle 45}] (A) to node [right] {$\mathbf{s}_1, \mathbf{s}_3$} (D);
\draw[arrows={triangle 45-triangle 45}] (B) to node [left] {$\mathbf{s}_1, \mathbf{s}_3$} (C);
\draw[arrows={triangle 45-triangle 45}] (B) to node [near start] {$\mathbf{r}_1, \mathbf{r}_3$} (D);
\draw[arrows={triangle 45-triangle 45}] (C) to node [below] {$\mathbf{s}_0, \mathbf{s}_2$} (D);

\tikzset{every loop/.style={min distance=8mm,in=60,out=30,looseness=8}}    
\path[arrows={-triangle 45}] (A) edge [loop above] node {$\mathbf{r}_0, \mathbf{r}_2$} ();
\tikzset{every loop/.style={min distance=8mm,in=120,out=150,looseness=8}}    
\path[arrows={-triangle 45}] (B) edge [loop above] node {$\mathbf{r}_0, \mathbf{r}_2$} ();
\tikzset{every loop/.style={min distance=8mm,in=240,out=210,looseness=8}}    
\path[arrows={-triangle 45}] (C) edge [loop above,below] node {$\mathbf{r}_0, \mathbf{r}_2$} ();
\tikzset{every loop/.style={min distance=8mm,in=300,out=-30,looseness=8}}    
\path[arrows={-triangle 45}] (D) edge [loop above,below] node {$\mathbf{r}_0, \mathbf{r}_2$} ();

\end{tikzpicture}

\end{center}
\caption{How the elements of the dihedral group $D_4$ transform the sign pattern of the Jacobian matrix $J$.}
\label{fig:plus_plus_minus_minus_patterns}
\end{figure}
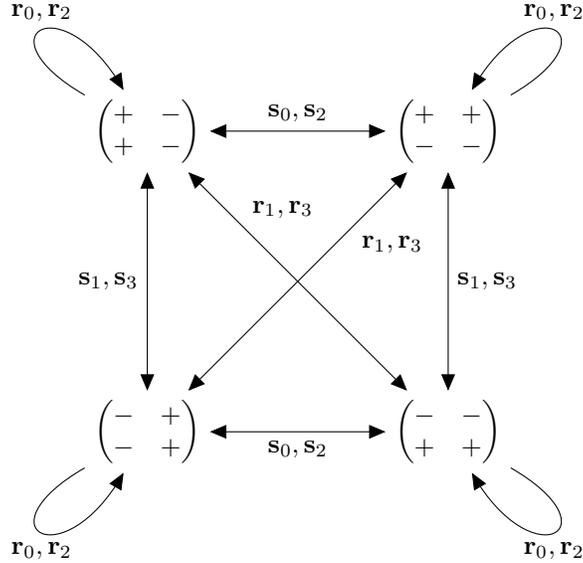

The following lemma gives a necessary condition for the ODE~\eqref{eq:ode_exp} with $\sgn J = \begin{pmatrix} + & - \\ + & - \end{pmatrix}$ to be permanent.

\begin{lemma} \label{lem:necess_for_perm}
Assume that $\sgn J = \begin{pmatrix}
+ & - \\ + & -
\end{pmatrix}$ and $\det J > 0$. Then permanence of the ODE~\eqref{eq:ode_exp} implies
\begin{enumerate}[(i)]
\item\label{it:ordering_of_as} $a_4 \leq a_2 < a_1 \leq a_3$ and
\item\label{it:c3_c4_nonpos} $c_3 \leq 0$, $c_4 \leq 0$, and $(c_3,c_4) \neq (0,0)$.
\end{enumerate}
\end{lemma}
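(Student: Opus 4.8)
The plan is to prove each part in contrapositive form: whenever an asserted inequality fails I will exhibit either an orbit of the ODE~\eqref{eq:ode_exp} that leaves every compact set, or, equivalently in the replicator picture of Section~\ref{sec:S2R}, a rest point on $\partial S$ that attracts an open set of orbits from $S$. Since permanence forces all forward orbits into one fixed compact set, either outcome is a contradiction. I would prove \eqref{it:ordering_of_as} first, working in the $(u,v)$-plane, and then use \eqref{it:ordering_of_as} while proving \eqref{it:c3_c4_nonpos} through the embedding into \eqref{eq:ode_A}.

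\textbf{Part \eqref{it:ordering_of_as}.} The sign pattern already gives $a_1>a_2$ and $a_3>a_4$, so the content is to rule out $a_1>a_3$ and $a_2<a_4$. If $a_1>a_3$, then $a_1=\max_i a_i$, and on each strip $\{u\ge u_0,\ |v|\le V\}$ the monomial $\e^{a_1u+b_1v}$ dominates the other three; thus $\dot u=\e^{a_1u+b_1v}\bigl(1-o(1)\bigr)>0$ while $|\dot v|=o(\dot u)$, so $\frac{dv}{du}\to 0$ and any orbit reaching large $u$ escapes to $u=+\infty$ with $v$ bounded. Symmetrically, $a_2<a_4$ would make $a_2=\min_i a_i$, so $\e^{a_2u+b_2v}$ dominates as $u\to-\infty$ and drives $\dot u<0$ to $u=-\infty$. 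Either case contradicts permanence, giving $a_4\le a_2<a_1\le a_3$.

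\textbf{Part \eqref{it:c3_c4_nonpos}.} From \eqref{eq:c3_c4_detJ} we have $c_3+c_4=-\det J<0$, which already yields $(c_3,c_4)\ne(0,0)$. Because $\mathbf r_2$ preserves $\sgn J$, preserves permanence, and sends $(c_1,c_2,c_3,c_4)$ to $(c_2,c_1,c_4,c_3)$, it suffices to prove $c_4\le 0$. Assuming $c_4>0$, one computes $c_4=(a_2-a_1)(b_3-b_1)-(b_2-b_1)(a_3-a_1)$, and since $a_2<a_1\le a_3$, $b_1<b_2$ and (by \eqref{it:ordering_of_as}) $a_3\ge a_1$, positivity forces $b_3<b_1$. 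Hence $b_3<b_1<b_2$, and as $P_3$ is the rightmost point it is a hull vertex; checking the remaining possibilities one finds the only sign pattern compatible with $c_4>0$, $c_3<0$, $c_1+c_2>0$ and this $b$-ordering is $\sgn c=(+,-,-,+)$, in which $P_1P_3$ is a hull edge with outward normal $n$ in the open first quadrant and $P_2,P_4$ strictly on its inner side. In the replicator system~\eqref{eq:ode_A} this pattern puts $\mathcal F_{13}$ into $\partial S$, and the equal-sign eigenvalues $A_{13}=b_1-b_3>0$, $A_{31}=a_3-a_1>0$ produce an interior rest point $\widetilde E_{13}$ on $\mathcal F_{13}$ that is attracting along the edge; its transverse eigenvalues $\lambda_j=(A\widetilde E_{13})_j-\widetilde E_{13}^{\mathsf T}A\widetilde E_{13}$ ($j=2,4$) should come out as positive multiples of $\langle P_j-P_1,n\rangle$, hence negative because $P_2,P_4$ lie inside. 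Thus $\widetilde E_{13}$ is a sink on $\partial S$, an open set of orbits in $S$ converges to it, the corresponding solutions of~\eqref{eq:ode_exp} escape to infinity, and permanence fails; so $c_4\le 0$, and $c_3\le 0$ follows by $\mathbf r_2$.

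\textbf{Main obstacle.} The delicate part is the final step of \eqref{it:c3_c4_nonpos}. First, one must justify the identification $\lambda_j=\kappa\,\langle P_j-P_1,n\rangle$ with $\kappa>0$ (equivalently, that $P_2,P_4$ interior to the hull edge $P_1P_3$ makes $\widetilde E_{13}$ a genuine sink rather than a saddle) and then convert ``sink on $\partial S$'' into an actual escaping orbit of~\eqref{eq:ode_exp}, using the invariance of $\{Q=1\}$ and its accumulation on $\mathcal F_{13}$. Second, one must handle the non-generic configurations that \eqref{it:c3_c4_nonpos} still allows: the boundary case $a_1=a_3$, where $\widetilde E_{13}$ collapses into the corner $E_1$ and the governing eigenvalue vanishes (needing a center-manifold or direct estimate), and the cases with some $c_i=0$ (three $P_i$ collinear), where $\partial S$ and the location of the attracting rest point change. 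These are precisely the places where the clean dominant-monomial argument of part \eqref{it:ordering_of_as} no longer applies and a finer study of the flow near $\partial S$ is required.
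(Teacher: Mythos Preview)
Your overall strategy is sound and parallels the paper closely. For part~\eqref{it:ordering_of_as}, however, you take a genuinely different route: you argue directly in the $(u,v)$-plane via a dominant-monomial escape, whereas the paper stays in the replicator picture and shows that $a_1>a_3$ makes all off-diagonal entries of the first column of $A$ negative, so the corner $E_1$ is asymptotically stable, and then checks (by a short sign analysis on $(c_2,c_3,c_4)$) that $E_1\in\partial S$. Your approach is more elementary and works once you make the strip argument precise: since $a_1$ is the strict maximum of the $a_i$, on $\{u\ge u_0,\ |v|\le V\}$ one has $|dv/du|\le C\,\e^{(\max(a_3,a_4)-a_1)u}$, which is integrable in $u$, so an orbit starting at $(u_0,0)$ with $u_0$ large stays in the strip and has $u\to+\infty$.

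For part~\eqref{it:c3_c4_nonpos} your argument is, up to the symmetry $\mathbf r_2$, exactly the paper's: the paper assumes $c_3>0$, deduces $\sgn c=(-,+,+,-)$, and exhibits the asymptotically stable edge equilibrium $E_{24}$ (or, in the degenerate case $a_4=a_2$, the corner $E_2$ via a center-manifold argument); you assume $c_4>0$, aim for $\sgn c=(+,-,-,+)$, and use $E_{13}$. Two remarks. First, your deduction of $\sgn c=(+,-,-,+)$ is too casual, but in fact the extra $c_i=0$ cases you list as obstacles do not arise: once $a_4\le a_2<a_1\le a_3$ and $c_4>0$ (which forces $b_3<b_1$), a direct look at $c_1=\Delta(243)$ and $c_2=\Delta(134)$ gives $c_1>0$ and $c_2<0$ strictly. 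Second, your geometric formula for the transverse eigenvalues is correct and is precisely the content of~\eqref{eq:Gamma_special}: with $n=(b_1-b_3,\,a_3-a_1)$ one gets $\Gamma_{13}^2=-c_4/|n|_1$ and $\Gamma_{13}^4=c_2/|n|_1$, both negative, so $E_{13}$ is a genuine sink on $\partial S$ and no further conversion is needed. The only remaining case is $a_1=a_3$; here the paper's treatment of the mirror case $a_4=a_2$ (eigenvalues at $E_2$ are negative, negative, and zero, with attracting flow on the one-dimensional center manifold along $\mathcal F_{24}$, hence $E_2$ is asymptotically stable by the reduction principle) carries over verbatim to $E_1$.
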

\begin{proof}
The inequalities $a_2 < a_1$ and $a_4 < a_3$ readily follow from the assumption on the sign pattern of the Jacobian matrix. 

Next, we prove that $a_1 \leq a_3$. Assume by contradiction that $a_3 < a_1$. Then all the non-diagonal entries in the first column of $A$ in the ODE~\eqref{eq:ode_A} are negative, and therefore the corner $E_1=(1,0,0,0)$ is asymptotically stable. We claim that
\begin{align} \label{eq:E1_in_partialS}
\text{there is both a positive and a negative number among } c_2, c_3, c_4.
\end{align}
Once we show that the claim \eqref{eq:E1_in_partialS} indeed holds, it follows that $E_1 \in \partial S$. This together with the asymptotic stability of $E_1$ contradicts the permanence of the ODE~\eqref{eq:ode_exp}. Thus, $a_1 \leq a_3$ follows. We now prove the claim \eqref{eq:E1_in_partialS}. By equation \eqref{eq:c3_c4_detJ}, if one of $c_3$ and $c_4$ is positive, the other must be negative. Observe that $c_3$ and $c_4$ cannot both be zero, because then $\det J = 0$ by equation \eqref{eq:c3_c4_detJ}. We argue that each of $\sgn(c_3,c_4) = (0,-)$, $\sgn(c_3,c_4) = (-,0)$, and $\sgn(c_3,c_4) = (-,-)$ implies $c_2 > 0$. Indeed, these follow immediately from $a_4 < a_3 < a_1$ and the geometric definition \eqref{eq:c_Delta} of $c_2$, $c_3$, $c_4$, see Figure~\ref{fig:proof_of_claim_c2_pos}. The claim \eqref{eq:E1_in_partialS} is therefore proven.
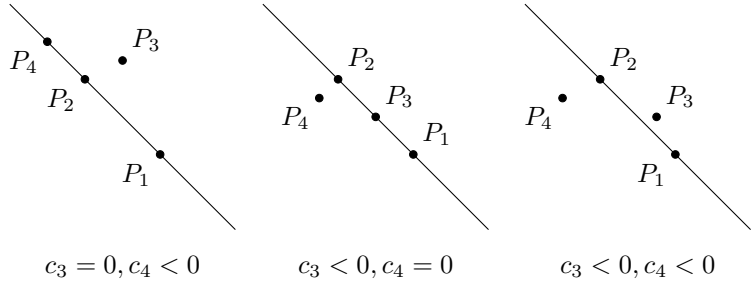
\begin{figure}
\begin{center}
\begin{tabular}{ccc}
\begin{tikzpicture}[radius=0.05]
\coordinate (P1) at (2  ,-2   );
\coordinate (P2) at (1  ,-1   );
\coordinate (P3) at (1.5,-0.75);
\coordinate (P4) at (0.5,-0.5 );
\coordinate (caption) at (1.5,-3.5);
\draw (0,0) -- (3,-3);
\draw [fill] (P1) circle; \node [below left]  at (P1) {$P_1$};
\draw [fill] (P2) circle; \node [below left]  at (P2) {$P_2$};
\draw [fill] (P3) circle; \node [above right] at (P3) {$P_3$};
\draw [fill] (P4) circle; \node [below left]  at (P4) {$P_4$};
\node at (caption) {$c_3 = 0, c_4 < 0$};
\end{tikzpicture}
&
\begin{tikzpicture}[radius=0.05]
\coordinate (P1) at (2   ,-2  );
\coordinate (P2) at (1   ,-1  );
\coordinate (P3) at (1.5,-1.5);
\coordinate (P4) at (0.75 ,-1.25);
\coordinate (caption) at (1.5,-3.5);
\draw (0,0) -- (3,-3);
\draw [fill] (P1) circle; \node [above right]  at (P1) {$P_1$};
\draw [fill] (P2) circle; \node [above right] at (P2) {$P_2$};
\draw [fill] (P3) circle; \node [above right] at (P3) {$P_3$};
\draw [fill] (P4) circle; \node [below left]  at (P4) {$P_4$};
\node at (caption) {$c_3 < 0, c_4 = 0$};
\end{tikzpicture}
&
\begin{tikzpicture}[radius=0.05]
\coordinate (P1) at (2   ,-2   );
\coordinate (P2) at (1   ,-1   );
\coordinate (P3) at (1.75,-1.5 );
\coordinate (P4) at (0.5 ,-1.25);
\coordinate (caption) at (1.5,-3.5);
\draw (0,0) -- (3,-3);
\draw [fill] (P1) circle; \node [below left]  at (P1) {$P_1$};
\draw [fill] (P2) circle; \node [above right] at (P2) {$P_2$};
\draw [fill] (P3) circle; \node [above right] at (P3) {$P_3$};
\draw [fill] (P4) circle; \node [below left]  at (P4) {$P_4$};
\node at (caption) {$c_3 < 0, c_4 < 0$};
\end{tikzpicture}
\\
\end{tabular}
\end{center}
\caption{Illustration of the proof of the claim \eqref{eq:E1_in_partialS}.}
\label{fig:proof_of_claim_c2_pos}
\end{figure}

The inequality $a_4 \leq a_2$ follows in a similar way: $a_2 < a_4$ would imply that $E_2 = (0,1,0,0)$ is asymptotically stable (and, one can show that $E_2 \in \partial S$). Alternatively, using the rotation $\mathbf{r}_2$ and the just proved fact that permanence implies $a_1 \leq a_3$, one immediately finds that permanence implies $-a_2 \leq -a_4$. Or, equivalently, $a_4 \leq a_2$. This concludes the proof of \eqref{it:ordering_of_as}.

Assume from now on that $a_4 \leq a_2 < a_1 \leq a_3$. As we already mentioned, $c_3 = c_4 = 0$ would imply that $\det J = 0$. Thus, $(c_3,c_4) \neq (0,0)$.

Next, we prove that $c_3\leq0$. Assume by contradiction that $c_3>0$. Then, by equation \eqref{eq:c3_c4_detJ}, $c_4 < 0$. Further, $c_1 < 0$ and $c_2 > 0$ follow immediately, see Figure~\ref{fig:proof_of_c1_neg_c2_pos}. Thus, $\sgn c = (-,+,+,-)$, and hence, the set $\partial S$ is the union of the four edges $\mathcal{F}_{12}$, $\mathcal{F}_{24}$, $\mathcal{F}_{43}$, $\mathcal{F}_{31}$, see the left panel in Figure~\ref{fig:partialS_c3_pos}. As can be read from Figure~\ref{fig:proof_of_c1_neg_c2_pos}, we have $b_2 < b_4$.

If $a_4 < a_2$ also holds, there exists an equilibrium $E_{24}$ on the edge $\mathcal{F}_{24}$, see the middle panel in Figure~\ref{fig:partialS_c3_pos}. By the equations \eqref{eq:Gamma_special} in Appendix \ref{sec:app_special},
\begin{align*}
\sgn \Gamma_{24}^1 &= -\sgn c_3 = -1, \\
\sgn \Gamma_{24}^3 &= +\sgn c_1 = -1,
\end{align*}
i.e., both of the external eigenvalues at $E_{24}$ are negative. Thus, $E_{24}$ is asymptotically stable, contradicting that the flow on $S$ is permanent.

To obtain a contradiction, it is left to show that the flow on $S$ is not permanent when $a_4 = a_2$. The eigenvalues at $E_2$ in the directions $E_1$, $E_3$, and $E_4$ are negative, negative, and zero, respectively. Further, the flow on the edge $\mathcal{F}_{24}$ goes from $E_4$ to $E_2$. See the right panel in Figure~\ref{fig:partialS_c3_pos}. 

Therefore, the stable manifold at $E_2$ is $2$-dimensional (and is contained in the facet $\mathcal{F}_{123}$), the center manifold at $E_2$ is $1$-dimensional (and is contained in the edge $\mathcal{F}_{24}$), and since the flow on the edge $\mathcal{F}_{24}$ goes from $E_4$ to $E_2$, $E_2$ is attracting on the center manifold. By the reduction principle (see~\cite[Theorem~5.2]{kuznetsov:2004}), $E_2$ is asymptotically stable in $\Delta_4$, contradicting that the flow on $S$ is permanent. Therefore, we conclude that permanence implies $c_3 \leq 0$.

\begin{figure}
\begin{center}
\begin{tikzpicture}[radius=0.05]
\coordinate (P1) at (2  ,-2   );
\coordinate (P2) at (1  ,-1   );
\coordinate (P3) at (2.5,-1.5 );
\coordinate (P4) at (0.5,-0.25);
\draw (0,0) -- (3,-3);
\draw (1,0) -- (1,-3);
\draw (2,0) -- (2,-3);
\draw [fill] (P1) circle; \node [below left] at (P1) {$P_1$};
\draw [fill] (P2) circle; \node [below left] at (P2) {$P_2$};
\draw [fill] (P3) circle; \node [right]      at (P3) {$P_3$};
\draw [fill] (P4) circle; \node [above]      at (P4) {$P_4$};
\end{tikzpicture}
\end{center}
\caption{Illustration of the proof that $c_3 > 0$ implies $c_1 < 0$ and $c_2 > 0$.}
\label{fig:proof_of_c1_neg_c2_pos}
\end{figure}
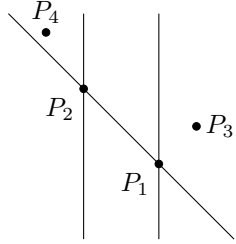

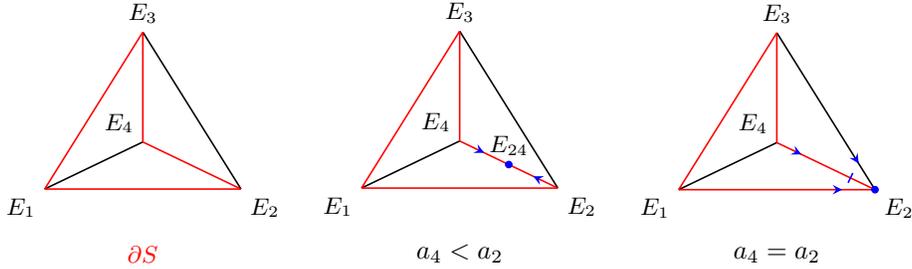
\begin{figure}
\begin{center}
\begin{tabular}{ccc}
\begin{tikzpicture}[scale=.87]
\coordinate (E1)  at (0   ,0   );
\coordinate (E2)  at (3   ,0   );
\coordinate (E3)  at (1.5 ,2.4 );
\coordinate (E4)  at (1.5 ,0.72 );
\coordinate (caption) at (1.5,-1);
\draw [semithick] (E1) -- (E4);
\draw [semithick] (E2) -- (E3);
\draw [red, semithick] (E1) -- (E2);
\draw [red, semithick] (E2) -- (E4);
\draw [red, semithick] (E4) -- (E3);
\draw [red, semithick] (E3) -- (E1);
\node [below left]  at (E1) {\small $E_1$};
\node [below right] at (E2) {\small $E_2$};
\node [above]       at (E3) {\small $E_3$};
\node [above left]  at (E4) {\small $E_4$};
\node at (caption) {\small \textcolor{red}{$\partial S$}};
\end{tikzpicture}
&
\begin{tikzpicture}[scale=.87]
\coordinate (E1)  at (0   ,0   );
\coordinate (E2)  at (3   ,0   );
\coordinate (E3)  at (1.5 ,2.4 );
\coordinate (E4)  at (1.5 ,0.72 );
\coordinate (E24) at (2.25,0.36);
\coordinate (arr42) at (1.875,0.54);
\coordinate (arr24) at (2.625,0.18);
\coordinate (caption) at (1.5,-1);
\draw [semithick] (E1) -- (E4);
\draw [semithick] (E2) -- (E3);
\draw [red, semithick] (E1) -- (E2);
\draw [red, semithick] (E2) -- (E4);
\draw [red, semithick] (E4) -- (E3);
\draw [red, semithick] (E3) -- (E1);
\node [below left]  at (E1) {\small $E_1$};
\node [below right] at (E2) {\small $E_2$};
\node [above]       at (E3) {\small $E_3$};
\node [above left]  at (E4) {\small $E_4$};
\draw [fill,blue] (E24) circle [radius=0.05];
\node [above] at (E24) {\small $E_{24}$};
\draw [blue,draw opacity=0,semithick,-stealth] (E4) -- (arr42);
\draw [blue,draw opacity=0,semithick,-stealth] (E2) -- (arr24);
\node at (caption) {$a_4 < a_2$};
\end{tikzpicture}
&
\begin{tikzpicture}[scale=.87]
\coordinate (E1)  at (0   ,0   );
\coordinate (E2)  at (3   ,0   );
\coordinate (E3)  at (1.5 ,2.4 );
\coordinate (E4)  at (1.5 ,0.72);
\coordinate (arr42) at (1.875,0.54);
\coordinate (arr24a) at (2.625+.036,0.18+.075);
\coordinate (arr24b) at (2.625-.036,0.18-.075);
\coordinate (arr21) at (2.5,0);
\coordinate (arr23) at (2.75,0.4);
\coordinate (caption) at (1.5,-1);
\draw [semithick] (E1) -- (E4);
\draw [semithick] (E2) -- (E3);
\draw [red, semithick] (E1) -- (E2);
\draw [red, semithick] (E2) -- (E4);
\draw [red, semithick] (E4) -- (E3);
\draw [red, semithick] (E3) -- (E1);
\node [below left]  at (E1) {\small $E_1$};
\node [below right] at (E2) {\small $E_2$};
\node [above]       at (E3) {\small $E_3$};
\node [above left]  at (E4) {\small $E_4$};
\draw [fill,blue] (E2) circle [radius=0.05];
\draw [blue,draw opacity=0,semithick,-stealth] (E4) -- (arr42);
\draw [blue,draw opacity=0,semithick,-stealth] (E1) -- (arr21);
\draw [blue,draw opacity=0,semithick,-stealth] (E3) -- (arr23);
\draw [blue,semithick,-] (arr24a) -- (arr24b);
\node at (caption) {$a_4 = a_2$};
\end{tikzpicture}
\\
\end{tabular}
\end{center}
\caption{Under $\sgn c = (-,+,+,-)$, the set $\partial S$ (left panel), the asymptotically stable equilibrium $E_{24}$ (middle panel), and the asymptotically stable equilibrium $E_2$ (right panel).}
\label{fig:partialS_c3_pos}
\end{figure}

Finally, proving that $c_4$ is also non-positive can be done in a similar way, but it is more elegant to apply the rotation $\mathbf{r}_2$: it maps the tuple $(c_1,c_2,c_3,c_4)$ to the tuple $(c_2,c_1,c_4,c_3)$. This concludes the proof of \eqref{it:c3_c4_nonpos}.
\end{proof}

The following lemma reveals further connections between the signs of $c_1$, $c_2$, $c_3$, $c_4$.

\begin{lemma} \label{lem:c3_or_c4_zero_implies_sgn_c1_and_c2}
Assume that $\sgn J = \begin{pmatrix}
+ & - \\ + & -
\end{pmatrix}$, $\det J > 0$, and $a_4 \leq a_2 < a_1 \leq a_3$. Then
\begin{enumerate}[(i)]
\item\label{it:c3_zero_c4_neg} $c_3 = 0$ and $c_4 < 0$ imply $c_1 \leq 0$ and $c_2 > 0$,
\item\label{it:c3_neg_c4_zero} $c_3 < 0$ and $c_4 = 0$ imply $c_1 > 0$ and $c_2 \leq 0$.
\end{enumerate}
\end{lemma}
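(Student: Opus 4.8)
The plan is to prove part~(i) directly from the collinearity that $c_3=0$ forces, and then to obtain part~(ii) for free from the rotation $\mathbf{r}_2$. For part~(i), the hypothesis $c_3=\Delta(142)=0$ says that $P_1$, $P_2$, $P_4$ lie on a common line $\ell$; since $a_2<a_1$, this line is non-vertical. The idea is to exploit that each of $c_1=\Delta(243)$, $c_2=\Delta(134)$, $c_4=\Delta(123)$ is the signed area of a triangle whose two base vertices lie on $\ell$ and whose apex is the off-line point $P_3$. I would write $P_i=P_0+t_iw$ for $i\in\{1,2,4\}$, with $w$ a direction vector of $\ell$ having positive first coordinate, so that the ordering $a_4\le a_2<a_1$ translates into $t_4\le t_2<t_1$.

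The one computation that makes everything work is that $\det(w,P_3-P_i)$ is independent of $i\in\{1,2,4\}$ (because $P_i-P_0$ is parallel to $w$); calling this common value $D$, a one-line expansion of the three determinants gives
\[
c_1=(t_4-t_2)\,D,\qquad c_2=(t_1-t_4)\,D,\qquad c_4=(t_2-t_1)\,D,
\]
which incidentally re-proves $c_1+c_2+c_4=0$ as a sanity check.

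The crux is then the sign of $D$. From $c_4=(t_2-t_1)D<0$ together with $t_2-t_1<0$ I read off $D>0$, and then $c_1=(t_4-t_2)D<0$ and $c_2=(t_1-t_4)D>0$, as claimed. The degenerate subcase $a_4=a_2$ needs a separate word: collinearity of $P_1$, $P_2$, $P_4$ with $a_4=a_2<a_1$ forces $b_4=b_2$, i.e. $P_4=P_2$, whence $c_1=\Delta(243)=0$ and $c_2=\det J-c_1=\det J>0$ by~\eqref{eq:c1_c2_detJ}. In both subcases $c_1\le0$ and $c_2>0$, which is part~(i).

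Part~(ii) then follows by applying $\mathbf{r}_2$: it preserves $\sgn J$ and $\det J$, it sends $(a_1,a_2,a_3,a_4)$ to $(-a_2,-a_1,-a_4,-a_3)$, for which the required ordering $a_4\le a_2<a_1\le a_3$ again holds, and it sends $(c_1,c_2,c_3,c_4)$ to $(c_2,c_1,c_4,c_3)$. A system satisfying the hypotheses of part~(ii) is thereby carried to one satisfying those of part~(i) (its new $c_3=c_4=0$ and its new $c_4=c_3<0$); applying part~(i) to the image gives new $c_1\le0$ and new $c_2>0$, i.e. $c_2\le0$ and $c_1>0$ for the original system, which is exactly part~(ii). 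The only genuinely delicate point is fixing the orientation, i.e. deciding on which side of $\ell$ the point $P_3$ lies; this is precisely where the hypothesis $c_4<0$ enters (equivalently $\det J>0$, since $c_3=0$ already forces $c_4=c_3+c_4=-\det J$ by~\eqref{eq:c3_c4_detJ}). Everything else is routine oriented-area bookkeeping, and the main thing to watch is the boundary case $a_4=a_2$, where the three-point parametrization degenerates and must be argued by hand as above.
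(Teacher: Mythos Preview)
Your proof is correct and follows essentially the same approach as the paper's: both argue geometrically from the collinearity of $P_1,P_2,P_4$ forced by $c_3=0$, and both derive part~(ii) from part~(i) via the rotation $\mathbf{r}_2$. The paper's proof is a one-liner that refers to a figure and invokes the oriented-area definition~\eqref{eq:c_Delta} directly, whereas you make the same geometric reasoning explicit by parametrizing the line $\ell$ and reducing each $c_i$ to $(t_j-t_k)D$; your treatment of the boundary case $a_4=a_2$ (forcing $P_4=P_2$) is exactly the equality case the paper notes.
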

\begin{proof}
Under $c_3 = 0$ and $c_4 < 0$, the configuration of the points $P_1$, $P_2$, $P_3$, $P_4$ is as shown in Figure~\ref{fig:c3_zero_c4_neg}. Using \eqref{eq:c_Delta}, one immediately obtains $c_1 \leq 0$ (with equality if and only if $a_2 = a_4$ (or, equivalently, $P_2 = P_4$)) and $c_2 > 0$. This concludes the proof of \eqref{it:c3_zero_c4_neg}.

One can prove \eqref{it:c3_neg_c4_zero} in a similar way. Alternatively, one may apply the rotation $\mathbf{r}_2$, it maps the tuple $(c_1,c_2,c_3,c_4)$ to the tuple $(c_2,c_1,c_4,c_3)$. Thus, the statement \eqref{it:c3_neg_c4_zero} follows from \eqref{it:c3_zero_c4_neg}.

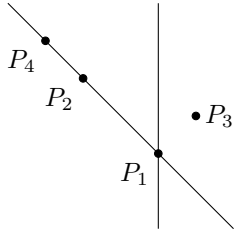
\begin{figure}
\begin{center}
\begin{tikzpicture}[radius=0.05]
\coordinate (P1) at (2  ,-2   );
\coordinate (P2) at (1  ,-1   );
\coordinate (P3) at (2.5,-1.5 );
\coordinate (P4) at (0.5,-0.5);
\draw (0,0) -- (3,-3);
\draw (2,0) -- (2,-3);
\draw [fill] (P1) circle; \node [below left] at (P1) {$P_1$};
\draw [fill] (P2) circle; \node [below left] at (P2) {$P_2$};
\draw [fill] (P3) circle; \node [right]      at (P3) {$P_3$};
\draw [fill] (P4) circle; \node [below left] at (P4) {$P_4$};
\end{tikzpicture}
\end{center}
\caption{Illustration of the proof that $c_3 = 0$ and $c_4 < 0$ imply $c_1 \leq 0$ and $c_2 > 0$.}
\label{fig:c3_zero_c4_neg}
\end{figure}
\end{proof}

By the inequality \eqref{eq:c1_c2_detJ} and Lemmata \ref{lem:necess_for_perm} and \ref{lem:c3_or_c4_zero_implies_sgn_c1_and_c2}, there are only $9$ possible sign patterns of $c$ that a permanent ODE~\eqref{eq:ode_exp} with $\sgn J = \begin{pmatrix}
+ & - \\ + & -
\end{pmatrix}$ can lead to, see Figure~\ref{fig:9_cs}. See Figure~\ref{fig:9_P1P2P3P4} for the relative positions of the four points $P_1$, $P_2$, $P_3$, $P_4$ in the $9$ cases.

If $\sgn c = (+,+,-,-)$ then the surface $S$ is given by $\frac{x_1^{|c_1|}x_2^{|c_2|}}{x_3^{|c_3|}x_4^{|c_4|}}=1$ and thus, $\partial S$ consists of the four edges $\mathcal{F}_{14}$, $\mathcal{F}_{42}$, $\mathcal{F}_{23}$, $\mathcal{F}_{31}$. The four points $P_1$, $P_2$, $P_3$, $P_4$ form a quadrangle with diagonals $P_1P_2$ and $P_3P_4$.

If $\sgn c = (+,-,-,-)$ then the surface $S$ is given by $\frac{x_1^{|c_1|}}{x_2^{|c_2|}x_3^{|c_3|}x_4^{|c_4|}}=1$ and thus, $\partial S$ consists of the three edges $\mathcal{F}_{23}$, $\mathcal{F}_{34}$, $\mathcal{F}_{42}$. The three points $P_2$, $P_3$, $P_4$ form a triangle, and $P_1$ lies inside.

If $\sgn c = (+,0,-,-)$ then the surface $S$ is given by $\frac{x_1^{|c_1|}}{x_3^{|c_3|}x_4^{|c_4|}}=1$ and thus, $\partial S$ consists of the two edges $\mathcal{F}_{42}$, $\mathcal{F}_{23}$ and the curve
\begin{align*}
\mathcal{C}_{34}^1 = \{x \in \Delta_4~|~ x_2 = 0 \text{ and } x_1^{|c_1|} = x_3^{|c_3|}x_4^{|c_4|}\} \subseteq \mathcal{F}_{134},
\end{align*}
which connects $E_3$ and $E_4$. The three points $P_2$, $P_3$, $P_4$ form a triangle, and $P_1$ lies inside the edge $P_3P_4$.

If $\sgn c = (-,+,-,-)$ then the surface $S$ is given by $\frac{x_2^{|c_2|}}{x_1^{|c_1|}x_3^{|c_3|}x_4^{|c_4|}}=1$ and thus, $\partial S$ consists of the three edges $\mathcal{F}_{13}$, $\mathcal{F}_{34}$, $\mathcal{F}_{41}$. The three points $P_1$, $P_3$, $P_4$ form a triangle, and $P_2$ lies inside.

If $\sgn c = (0,+,-,-)$ then the surface $S$ is given by $\frac{x_2^{|c_2|}}{x_3^{|c_3|}x_4^{|c_4|}}=1$ and thus, $\partial S$ consists of the two edges $\mathcal{F}_{41}$, $\mathcal{F}_{13}$ and the curve
\begin{align*}
\mathcal{C}_{34}^2 = \{x \in \Delta_4~|~ x_1 = 0 \text{ and } x_2^{|c_2|} = x_3^{|c_3|}x_4^{|c_4|}\} \subseteq \mathcal{F}_{234},
\end{align*}
which connects $E_3$ and $E_4$. The three points $P_1$, $P_3$, $P_4$ form a triangle, and $P_2$ lies inside the edge $P_3P_4$.

If $\sgn c = (+,-,-,0)$ then the surface $S$ is given by $\frac{x_1^{|c_1|}}{x_2^{|c_2|}x_3^{|c_3|}}=1$ and thus, $\partial S$ consists of the two edges $\mathcal{F}_{34}$, $\mathcal{F}_{42}$ and the curve
\begin{align*}
\mathcal{C}_{23}^1 = \{x \in \Delta_4~|~ x_4 = 0 \text{ and } x_1^{|c_1|} = x_2^{|c_2|}x_3^{|c_3|}\} \subseteq \mathcal{F}_{123},
\end{align*}
which connects $E_2$ and $E_3$. The three points $P_2$, $P_3$, $P_4$ form a triangle, and $P_1$ lies inside the edge $P_2P_3$.

If $\sgn c = (+,0,-,0)$ then the surface $S$ is given by $x_1=x_3$, i.e., $S$ is the triangle with vertices $E_2$, $E_4$, $E_m = \frac12 E_1 + \frac12 E_3$. The three points $P_2$, $P_3$, $P_4$ form a triangle, and $P_1$ coincides with $P_3$.

If $\sgn c = (-,+,0,-)$ then the surface $S$ is given by $\frac{x_2^{|c_2|}}{x_1^{|c_1|}x_4^{|c_4|}}=1$ and thus, $\partial S$ consists of the two edges $\mathcal{F}_{43}$, $\mathcal{F}_{31}$ and the curve
\begin{align*}
\mathcal{C}_{14}^2 = \{x \in \Delta_4~|~ x_3 = 0 \text{ and } x_2^{|c_2|} = x_1^{|c_1|}x_4^{|c_4|}\} \subseteq \mathcal{F}_{124},
\end{align*}
which connects $E_1$ and $E_4$. The three points $P_1$, $P_3$, $P_4$ form a triangle, and $P_2$ lies inside the edge $P_1P_4$.

If $\sgn c = (0,+,0,-)$ then the surface $S$ is given by $x_2=x_4$, i.e., $S$ is the triangle with vertices $E_1$, $E_3$, $E_m = \frac12 E_2 + \frac12 E_4$. The three points $P_1$, $P_3$, $P_4$ form a triangle, and $P_2$ coincides with $P_4$.

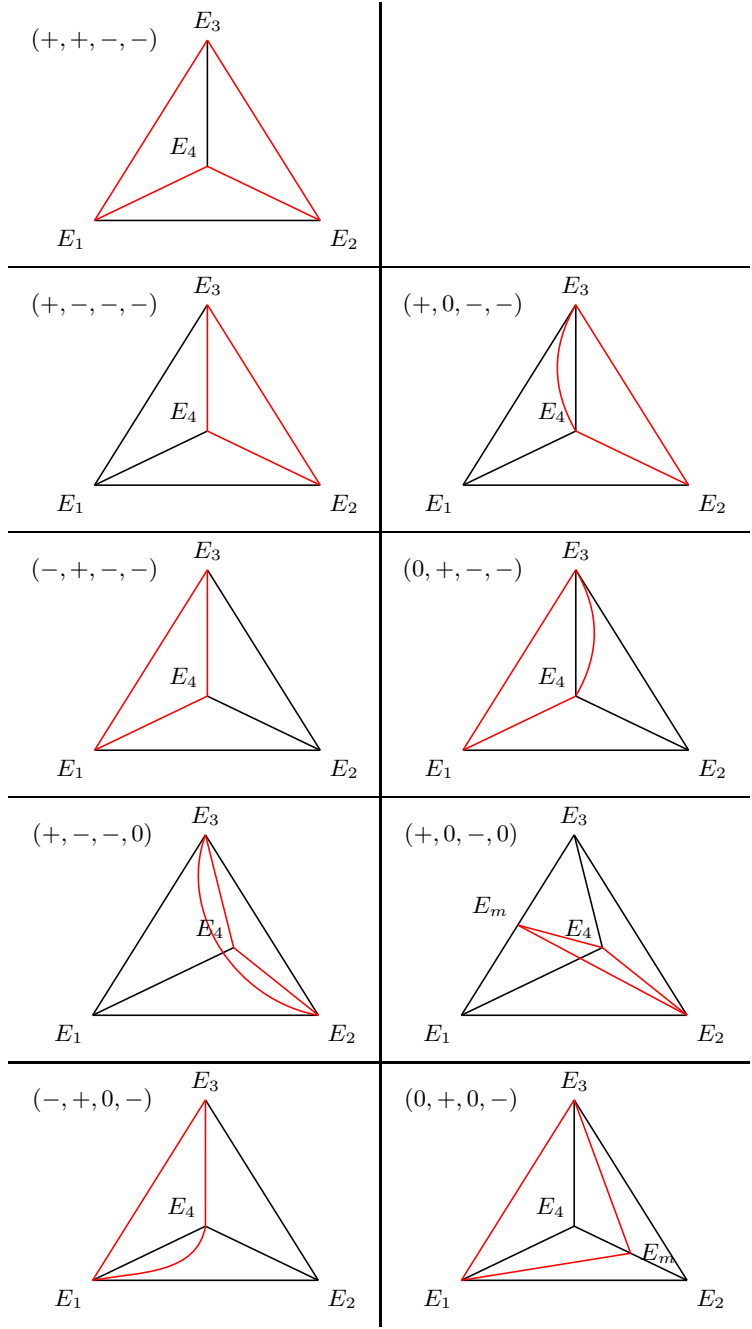
\begin{figure}
\captionsetup{singlelinecheck=false}
\begin{center}
\begin{tabular}{c|c}
\begin{tikzpicture}
\coordinate (E1)  at (0   ,0   );
\coordinate (E2)  at (3   ,0   );
\coordinate (E3)  at (1.5 ,2.4 );
\coordinate (E4)  at (1.5 ,0.72 );
\coordinate (caption) at (0,2.4);
\draw [semithick] (E1) -- (E2);
\draw [semithick] (E3) -- (E4);
\draw [red, semithick] (E1) -- (E4);
\draw [red, semithick] (E4) -- (E2);
\draw [red, semithick] (E2) -- (E3);
\draw [red, semithick] (E3) -- (E1);
\node [below left]  at (E1) {\small $E_1$};
\node [below right] at (E2) {\small $E_2$};
\node [above]       at (E3) {\small $E_3$};
\node [above left]  at (E4) {\small $E_4$};
\node at (caption) {\small $(+,+,-,-)$};
\end{tikzpicture}
&
\\
\hline
\begin{tikzpicture}
\coordinate (E1)  at (0   ,0   );
\coordinate (E2)  at (3   ,0   );
\coordinate (E3)  at (1.5 ,2.4 );
\coordinate (E4)  at (1.5 ,0.72 );
\coordinate (caption) at (0,2.4);
\draw [semithick] (E1) -- (E2);
\draw [semithick] (E1) -- (E3);
\draw [semithick] (E1) -- (E4);
\draw [red, semithick] (E4) -- (E2);
\draw [red, semithick] (E2) -- (E3);
\draw [red, semithick] (E3) -- (E4);
\node [below left]  at (E1) {\small $E_1$};
\node [below right] at (E2) {\small $E_2$};
\node [above]       at (E3) {\small $E_3$};
\node [above left]  at (E4) {\small $E_4$};
\node at (caption) {\small $(+,-,-,-)$};
\end{tikzpicture}
&
\begin{tikzpicture}
\coordinate (E1)  at (0   ,0   );
\coordinate (E2)  at (3   ,0   );
\coordinate (E3)  at (1.5 ,2.4 );
\coordinate (E4)  at (1.5 ,0.72 );
\coordinate (caption) at (0,2.4);
\draw [semithick] (E1) -- (E2);
\draw [semithick] (E1) -- (E3);
\draw [semithick] (E1) -- (E4);
\draw [semithick] (E3) -- (E4);
\draw [red, semithick] (E4) -- (E2);
\draw [red, semithick] (E2) -- (E3);
\draw [red, semithick] (E3) to [out=-120,in=120] (E4);
\node [below left]  at (E1) {\small $E_1$};
\node [below right] at (E2) {\small $E_2$};
\node [above]       at (E3) {\small $E_3$};
\node [above left]  at (E4) {\small $E_4$};
\node at (caption) {\small $(+,0,-,-)$};
\end{tikzpicture}
\\
\hline
\begin{tikzpicture}
\coordinate (E1)  at (0   ,0   );
\coordinate (E2)  at (3   ,0   );
\coordinate (E3)  at (1.5 ,2.4 );
\coordinate (E4)  at (1.5 ,0.72 );
\coordinate (caption) at (0,2.4);
\draw [semithick] (E2) -- (E1);
\draw [semithick] (E2) -- (E3);
\draw [semithick] (E2) -- (E4);
\draw [red, semithick] (E1) -- (E4);
\draw [red, semithick] (E4) -- (E3);
\draw [red, semithick] (E3) -- (E1);
\node [below left]  at (E1) {\small $E_1$};
\node [below right] at (E2) {\small $E_2$};
\node [above]       at (E3) {\small $E_3$};
\node [above left]  at (E4) {\small $E_4$};
\node at (caption) {\small $(-,+,-,-)$};
\end{tikzpicture}
&
\begin{tikzpicture}
\coordinate (E1)  at (0   ,0   );
\coordinate (E2)  at (3   ,0   );
\coordinate (E3)  at (1.5 ,2.4 );
\coordinate (E4)  at (1.5 ,0.72 );
\coordinate (caption) at (0,2.4);
\draw [semithick] (E2) -- (E1);
\draw [semithick] (E2) -- (E3);
\draw [semithick] (E2) -- (E4);
\draw [semithick] (E3) -- (E4);
\draw [red, semithick] (E1) -- (E3);
\draw [red, semithick] (E1) -- (E4);
\draw [red, semithick] (E3) to [out=-60,in=60] (E4);
\node [below left]  at (E1) {\small $E_1$};
\node [below right] at (E2) {\small $E_2$};
\node [above]       at (E3) {\small $E_3$};
\node [above left]  at (E4) {\small $E_4$};
\node at (caption) {\small $(0,+,-,-)$};
\end{tikzpicture}
\\
\hline
\begin{tikzpicture}
\coordinate (E1)  at (0   ,0   );
\coordinate (E2)  at (3   ,0   );
\coordinate (E3)  at (1.5 ,2.4 );
\coordinate (E4)  at (1.5*1.25 ,0.72*1.25);
\coordinate (caption) at (0,2.4);
\draw [semithick] (E1) -- (E2);
\draw [semithick] (E1) -- (E3);
\draw [semithick] (E1) -- (E4);
\draw [semithick] (E2) -- (E3);
\draw [red, semithick] (E4) -- (E2);
\draw [red, semithick] (E2) to [out=170,in=-110] (E3);
\draw [red, semithick] (E3) -- (E4);
\node [below left]  at (E1) {\small $E_1$};
\node [below right] at (E2) {\small $E_2$};
\node [above]       at (E3) {\small $E_3$};
\node [above left]  at (E4) {\small $E_4$};
\node at (caption) {\small $(+,-,-,0)$};
\end{tikzpicture}
&
\begin{tikzpicture}
\coordinate (E1)  at (0   ,0   );
\coordinate (E2)  at (3   ,0   );
\coordinate (E3)  at (1.5 ,2.4 );
\coordinate (E4)  at (1.5*1.25 ,0.72*1.25 );
\coordinate (Em)  at (0.75,1.2);
\coordinate (caption) at (0,2.4);
\draw [semithick] (E1) -- (E2);
\draw [semithick] (E1) -- (E3);
\draw [semithick] (E1) -- (E4);
\draw [semithick] (E2) -- (E3);
\draw [semithick] (E3) -- (E4);
\draw [red, semithick] (E4) -- (E2);
\draw [red, semithick] (E2) -- (Em);
\draw [red, semithick] (Em) -- (E4);
\node [below left]  at (E1) {\small $E_1$};
\node [below right] at (E2) {\small $E_2$};
\node [above]       at (E3) {\small $E_3$};
\node [above left]  at (E4) {\small $E_4$};
\node [above left]  at (Em) {\small $E_m$};
\node at (caption) {\small $(+,0,-,0)$};
\end{tikzpicture}
\\
\hline
\begin{tikzpicture}
\coordinate (E1)  at (0   ,0   );
\coordinate (E2)  at (3   ,0   );
\coordinate (E3)  at (1.5 ,2.4 );
\coordinate (E4)  at (1.5 ,0.72 );
\coordinate (caption) at (0,2.4);
\draw [semithick] (E2) -- (E1);
\draw [semithick] (E2) -- (E3);
\draw [semithick] (E2) -- (E4);
\draw [semithick] (E1) -- (E4);
\draw [red, semithick] (E1) to [out=10,in=-100] (E4);
\draw [red, semithick] (E4) -- (E3);
\draw [red, semithick] (E3) -- (E1);
\node [below left]  at (E1) {\small $E_1$};
\node [below right] at (E2) {\small $E_2$};
\node [above]       at (E3) {\small $E_3$};
\node [above left]  at (E4) {\small $E_4$};
\node at (caption) {\small $(-,+,0,-)$};
\end{tikzpicture}
&
\begin{tikzpicture}
\coordinate (E1) at (0   ,0   );
\coordinate (E2) at (3   ,0   );
\coordinate (E3) at (1.5 ,2.4 );
\coordinate (E4) at (1.5 ,0.72);
\coordinate (Em) at (2.25,0.36);
\coordinate (caption) at (0,2.4);
\draw [semithick] (E2) -- (E1);
\draw [semithick] (E2) -- (E3);
\draw [semithick] (E2) -- (E4);
\draw [semithick] (E3) -- (E4);
\draw [semithick] (E1) -- (E4);
\draw [red, semithick] (E1) -- (E3);
\draw [red, semithick] (E1) -- (Em);
\draw [red, semithick] (E3) -- (Em);
\node [below left]  at (E1) {\small $E_1$};
\node [below right] at (E2) {\small $E_2$};
\node [above]       at (E3) {\small $E_3$};
\node [above left]  at (E4) {\small $E_4$};
\node [right] at (Em) {\small $E_m$};
\node at (caption) {\small $(0,+,0,-)$};
\end{tikzpicture}
\\
\end{tabular}
\end{center}
\caption[]{The $9$ sign patterns of $c$ and the corresponding $\partial S$ (red) that a permanent ODE~\eqref{eq:ode_exp} with $\sgn J = \begin{pmatrix} + & - \\ + & - \end{pmatrix}$ can lead to.}
\label{fig:9_cs}
\end{figure}

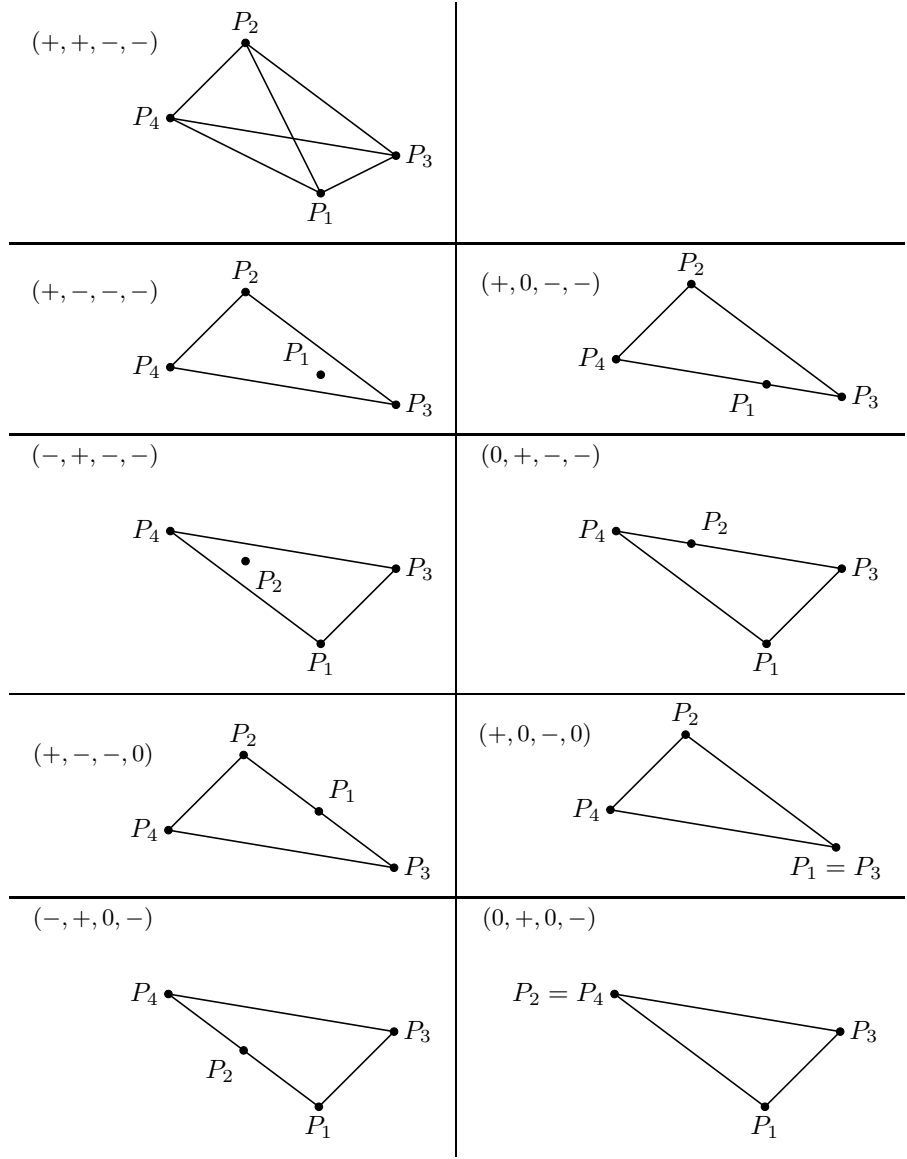
\begin{figure}
\begin{center}
\begin{tabular}{c|c}
\begin{tikzpicture}[radius=0.05]
\coordinate (P1)  at (2  ,-2   );
\coordinate (P2)  at (1  ,0   );
\coordinate (P3)  at (3 ,-1.5 );
\coordinate (P4)  at (0 ,-1);
\coordinate (caption) at (-1,0);
\draw [semithick] (P1) -- (P2);
\draw [semithick] (P1) -- (P3);
\draw [semithick] (P1) -- (P4);
\draw [semithick] (P2) -- (P3);
\draw [semithick] (P2) -- (P4);
\draw [semithick] (P3) -- (P4);
\draw [fill] (P1) circle; \node [below] at (P1) {$P_1$};
\draw [fill] (P2) circle; \node [above] at (P2) {$P_2$};
\draw [fill] (P3) circle; \node [right] at (P3) {$P_3$};
\draw [fill] (P4) circle; \node [left]  at (P4) {$P_4$};
\node at (caption) {\small $(+,+,-,-)$};
\end{tikzpicture}
&
\\
\hline
\begin{tikzpicture}[radius=0.05]
\coordinate (P1)  at (2  ,-1.1   );
\coordinate (P2)  at (1  ,0   );
\coordinate (P3)  at (3 ,-1.5 );
\coordinate (P4)  at (0 ,-1);
\coordinate (caption) at (-1,0);
\draw [semithick] (P2) -- (P3);
\draw [semithick] (P2) -- (P4);
\draw [semithick] (P3) -- (P4);
\draw [fill] (P1) circle; \node [above left] at (P1) {$P_1$};
\draw [fill] (P2) circle; \node [above] at (P2) {$P_2$};
\draw [fill] (P3) circle; \node [right] at (P3) {$P_3$};
\draw [fill] (P4) circle; \node [left]  at (P4) {$P_4$};
\node at (caption) {\small $(+,-,-,-)$};
\end{tikzpicture}
&
\begin{tikzpicture}[radius=0.05]
\coordinate (P1)  at (2  ,-1.33333   );
\coordinate (P2)  at (1  ,0   );
\coordinate (P3)  at (3 ,-1.5 );
\coordinate (P4)  at (0 ,-1);
\coordinate (caption) at (-1,0);
\draw [semithick] (P2) -- (P3);
\draw [semithick] (P2) -- (P4);
\draw [semithick] (P3) -- (P4);
\draw [fill] (P1) circle; \node [below left] at (P1) {$P_1$};
\draw [fill] (P2) circle; \node [above] at (P2) {$P_2$};
\draw [fill] (P3) circle; \node [right] at (P3) {$P_3$};
\draw [fill] (P4) circle; \node [left]  at (P4) {$P_4$};
\node at (caption) {\small $(+,0,-,-)$};
\end{tikzpicture}
\\
\hline
\begin{tikzpicture}[radius=0.05]
\coordinate (P1)  at (2  ,-2.5   );
\coordinate (P2)  at (1  ,-1.4   );
\coordinate (P3)  at (3 ,-1.5 );
\coordinate (P4)  at (0 ,-1);
\coordinate (caption) at (-1,0);
\draw [semithick] (P1) -- (P3);
\draw [semithick] (P1) -- (P4);
\draw [semithick] (P3) -- (P4);
\draw [fill] (P1) circle; \node [below] at (P1) {$P_1$};
\draw [fill] (P2) circle; \node [below right] at (P2) {$P_2$};
\draw [fill] (P3) circle; \node [right] at (P3) {$P_3$};
\draw [fill] (P4) circle; \node [left]  at (P4) {$P_4$};
\node at (caption) {\small $(-,+,-,-)$};
\end{tikzpicture}
&
\begin{tikzpicture}[radius=0.05]
\coordinate (P1)  at (2  ,-2.5  );
\coordinate (P2)  at (1  ,-1.16666   );
\coordinate (P3)  at (3 ,-1.5 );
\coordinate (P4)  at (0 ,-1);
\coordinate (caption) at (-1,0);
\draw [semithick] (P1) -- (P3);
\draw [semithick] (P1) -- (P4);
\draw [semithick] (P3) -- (P4);
\draw [fill] (P1) circle; \node [below] at (P1) {$P_1$};
\draw [fill] (P2) circle; \node [above right] at (P2) {$P_2$};
\draw [fill] (P3) circle; \node [right] at (P3) {$P_3$};
\draw [fill] (P4) circle; \node [left]  at (P4) {$P_4$};
\node at (caption) {\small $(0,+,-,-)$};
\end{tikzpicture}
\\
\hline
\begin{tikzpicture}[radius=0.05]
\coordinate (P1)  at (2  ,-0.75);
\coordinate (P2)  at (1  ,0   );
\coordinate (P3)  at (3 ,-1.5 );
\coordinate (P4)  at (0 ,-1);
\coordinate (caption) at (-1,0);
\draw [semithick] (P2) -- (P3);
\draw [semithick] (P2) -- (P4);
\draw [semithick] (P3) -- (P4);
\draw [fill] (P1) circle; \node [above right] at (P1) {$P_1$};
\draw [fill] (P2) circle; \node [above] at (P2) {$P_2$};
\draw [fill] (P3) circle; \node [right] at (P3) {$P_3$};
\draw [fill] (P4) circle; \node [left]  at (P4) {$P_4$};
\node at (caption) {\small $(+,-,-,0)$};
\end{tikzpicture}
&
\begin{tikzpicture}[radius=0.05]
\coordinate (P2)  at (1  ,0   );
\coordinate (P3)  at (3 ,-1.5 );
\coordinate (P4)  at (0 ,-1);
\coordinate (caption) at (-1,0);
\draw [semithick] (P2) -- (P3);
\draw [semithick] (P2) -- (P4);
\draw [semithick] (P3) -- (P4);
\draw [fill] (P2) circle; \node [above] at (P2) {$P_2$};
\draw [fill] (P3) circle; \node [below] at (P3) {$P_1=P_3$};
\draw [fill] (P4) circle; \node [left]  at (P4) {$P_4$};
\node at (caption) {\small $(+,0,-,0)$};
\end{tikzpicture}
\\
\hline
\begin{tikzpicture}[radius=0.05]
\coordinate (P1)  at (2  ,-2.5   );
\coordinate (P2)  at (1  ,-1.75   );
\coordinate (P3)  at (3 ,-1.5 );
\coordinate (P4)  at (0 ,-1);
\coordinate (caption) at (-1,0);
\draw [semithick] (P1) -- (P3);
\draw [semithick] (P1) -- (P4);
\draw [semithick] (P3) -- (P4);
\draw [fill] (P1) circle; \node [below] at (P1) {$P_1$};
\draw [fill] (P2) circle; \node [below left] at (P2) {$P_2$};
\draw [fill] (P3) circle; \node [right] at (P3) {$P_3$};
\draw [fill] (P4) circle; \node [left]  at (P4) {$P_4$};
\node at (caption) {\small $(-,+,0,-)$};
\end{tikzpicture}
&
\begin{tikzpicture}[radius=0.05]
\coordinate (P1)  at (2  ,-2.5  );
\coordinate (P3)  at (3 ,-1.5 );
\coordinate (P4)  at (0 ,-1);
\coordinate (caption) at (-1,0);
\draw [semithick] (P1) -- (P3);
\draw [semithick] (P1) -- (P4);
\draw [semithick] (P3) -- (P4);
\draw [fill] (P1) circle; \node [below] at (P1) {$P_1$};
\draw [fill] (P3) circle; \node [right] at (P3) {$P_3$};
\draw [fill] (P4) circle; \node [left]  at (P4) {$P_2=P_4$};
\node at (caption) {\small $(0,+,0,-)$};
\end{tikzpicture}
\\
\end{tabular}
\end{center}
\caption[]{The $9$ sign patterns of $c$ and the corresponding relative positions of the four points $P_1$, $P_2$, $P_3$, $P_4$ that a permanent ODE~\eqref{eq:ode_exp} with $\sgn J = \begin{pmatrix}
+ & - \\ + & -
\end{pmatrix}$ can lead to.}
\label{fig:9_P1P2P3P4}
\end{figure}

To prove Theorem \ref{thm:oppositesign} (C1), our task is to investigate permanence of the ODE~\eqref{eq:ode_exp} under the $9$ sign patterns of $c$ listed in Figure~\ref{fig:9_cs}. Due to the next lemma, none of the corners of the simplex $\Delta_4$ can attract an orbit from $S$.

\begin{lemma}\label{lem:corners_doesnt_attract}
Assume that $\sgn J = \begin{pmatrix}
+ & - \\ + & -
\end{pmatrix}$, $\det J > 0$, and $a_4 \leq a_2 < a_1 \leq a_3$. Consider any of the $9$ cases in Figure~\ref{fig:9_cs}. Then, for each $i \in \{1,2,3,4\}$, if $E_i \in \partial S$, it cannot attract an orbit from $S$.
\end{lemma}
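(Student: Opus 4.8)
The plan is to read off the transversal eigenvalues of the replicator equation \eqref{eq:ode_A} at the four corners and invoke an elementary repulsion argument, treating the single non-hyperbolic configuration separately via the reduction principle.

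The basic tool is the following observation. Since the matrix $A$ in \eqref{eq:A} has zero diagonal, the eigenvalue of \eqref{eq:ode_A} at the corner $E_i$ in the direction of $E_j$ equals $A_{ji}$. If $A_{ji}>0$ for some $j\neq i$, then along any orbit $\frac{d}{dt}\log x_j=(Ax)_j-x^{\mathsf{T}}Ax\to A_{ji}>0$ as $x\to E_i$, so $x_j$ is strictly increasing in a neighborhood of $E_i$; hence no orbit with $x_j>0$ can converge to $E_i$. As $S\subseteq\Delta_4^\circ$, such a corner cannot attract an orbit from $S$.

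I would first apply this to $E_3$ and $E_4$. From $\sgn J=\bigl(\begin{smallmatrix}+&-\\+&-\end{smallmatrix}\bigr)$ we have $J_{22}=b_3-b_4<0$, and the entries $A_{43}=A_{34}=b_4-b_3>0$ are precisely the eigenvalues at $E_3$ toward $E_4$ and at $E_4$ toward $E_3$. Thus $E_3$ and $E_4$ always carry a positive transversal eigenvalue and cannot attract an orbit from $S$. For $E_1$ the eigenvalue toward $E_3$ is $A_{31}=a_3-a_1\geq0$, and for $E_2$ the eigenvalue toward $E_4$ is $A_{42}=a_2-a_4\geq0$; whenever these are strictly positive the observation above finishes the argument.

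The only remaining configurations are the non-hyperbolic ones $a_1=a_3$ (for $E_1$) and $a_2=a_4$ (for $E_2$). By the rotation $\mathbf{r}_2$, which preserves the sign pattern of $J$, the ordering $a_4\leq a_2<a_1\leq a_3$, and $\det J>0$ while interchanging the configuration $(E_2,\,a_2=a_4)$ with $(E_1,\,a_1=a_3)$, it suffices to treat $E_1$ with $a_1=a_3$. Then the eigenvalues toward $E_2$ and $E_4$ are $a_2-a_1<0$ and $a_4-a_1<0$, while the eigenvalue toward $E_3$ is $0$, so the invariant edge $\mathcal{F}_{13}$ (where $x_2=x_4=0$) is a center manifold. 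On this edge a direct computation gives
\begin{align*}
\dot x_3=x_3\bigl[(Ax)_3-x^{\mathsf{T}}Ax\bigr]=x_1x_3^2\,(b_3-b_1).
\end{align*}
Since $E_1\in\partial S$ excludes $P_1=P_3$, and since $c_4\leq0$ holds in each of the nine cases, the identity $c_4=(a_2-a_1)(b_3-b_1)$ (valid when $a_1=a_3$) together with $a_2-a_1<0$ forces $b_3-b_1>0$. Hence $\dot x_3>0$ on $\mathcal{F}_{13}$, i.e.\ $E_1$ is repelling on its center manifold. By the reduction principle \cite[Theorem~5.2]{kuznetsov:2004}, the stable set of $E_1$ is then contained in the invariant face $\mathcal{F}_{124}=\{x_3=0\}$; as every orbit in $S$ satisfies $x_3>0$, none of them converges to $E_1$.

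I expect the main obstacle to be exactly this non-hyperbolic case: the elementary eigenvalue argument degenerates precisely when $a_1=a_3$ (or $a_2=a_4$), and one must instead establish that the center-manifold flow along the edge points away from the corner. The crux is the sign $b_3>b_1$, which is not available from the Jacobian alone but is forced by the combination $E_1\in\partial S$ (ruling out $P_1=P_3$) and $c_4\leq0$.
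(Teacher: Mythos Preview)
Your proof is correct and follows essentially the same approach as the paper: exhibit a strictly positive transversal eigenvalue at each corner (immediate for $E_3,E_4$ via $b_4-b_3>0$, and for $E_1,E_2$ via $a_3-a_1>0$, $a_2-a_4>0$ when strict), and in the single degenerate case use the reduction principle along the invariant edge as center manifold. The only cosmetic difference is that the paper treats $E_2$ with $a_2=a_4$ directly (deducing $b_4<b_2$ geometrically from $c_3<0$) and obtains $E_1$ via $\mathbf{r}_2$, whereas you treat $E_1$ with $a_1=a_3$ directly (deducing $b_3>b_1$ algebraically from $c_4=(a_2-a_1)(b_3-b_1)\le 0$ and $P_1\neq P_3$) and obtain $E_2$ via $\mathbf{r}_2$.
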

\begin{proof}
If $a_4 < a_2$ then the eigenvalue at $E_2$ in the direction $E_4$ is positive. Thus, by Lemma \ref{lem:saturated}, $E_2$ cannot attract an orbit from $\Delta_4^\circ$, and hence, from $S$.

If $a_4 = a_2$ and $E_2 \in \partial S$ then $c_3 < 0$ (see Figure~\ref{fig:9_cs}). Since $c_3 = \Delta(142)$ (see \eqref{eq:c_Delta}), we obtain $b_4 < b_2$ (see the left panel in Figure~\ref{fig:no_approach_E2}). The eigenvalues at $E_2$ in the directions $E_1$, $E_3$, and $E_4$ are negative, negative, and zero, respectively. Since $b_4 < b_2$, the flow on the edge $\mathcal{F}_{24}$ goes from $E_2$ to $E_4$. See the right panel in Figure~\ref{fig:no_approach_E2}. Therefore, the stable manifold at $E_2$ is $2$-dimensional (and is contained in the facet $\mathcal{F}_{123}$), the center manifold at $E_2$ is $1$-dimensional (and is contained in the edge $\mathcal{F}_{24}$), and since the flow on the edge $\mathcal{F}_{24}$ goes from $E_2$ to $E_4$, $E_2$ is repelling on the center manifold. By the reduction principle (see~\cite[Theorem~5.2]{kuznetsov:2004}), $E_2$ is topologically a saddle in $\Delta_4$, and hence, cannot attract any orbit from the interior of $\Delta_4$. Therefore, we conclude that the statement of the lemma holds for the corner $E_2$.

\begin{figure}
\begin{center}
\begin{tabular}{cc}
\begin{tikzpicture}[radius=0.05]
\coordinate (P1) at (2  ,-2   );
\coordinate (P2) at (1  ,-1   );
\coordinate (P4) at (1,-1.5 );
\draw (0,0) -- (3,-3);
\draw (1,0) -- (1,-3);
\draw [fill] (P1) circle; \node [above right] at (P1) {$P_1$};
\draw [fill] (P2) circle; \node [above right] at (P2) {$P_2$};
\draw [fill] (P4) circle; \node [left]       at (P4) {$P_4$};
\end{tikzpicture}
&
\begin{tikzpicture}[scale=1]
\coordinate (E1)  at (0   ,0   );
\coordinate (E2)  at (3   ,0   );
\coordinate (E3)  at (1.5 ,2.4 );
\coordinate (E4)  at (1.5 ,0.72);
\coordinate (arr42) at (1.875,0.54);
\coordinate (arr24a) at (2.625+.036,0.18+.075);
\coordinate (arr24b) at (2.625-.036,0.18-.075);
\coordinate (arr21) at (2.5,0);
\coordinate (arr23) at (2.75,0.4);
\draw [semithick] (E1) -- (E4);
\draw [semithick] (E2) -- (E3);
\draw [semithick] (E1) -- (E2);
\draw [semithick] (E2) -- (E4);
\draw [semithick] (E4) -- (E3);
\draw [semithick] (E3) -- (E1);
\node [below left]  at (E1) {\small $E_1$};
\node [below right] at (E2) {\small $E_2$};
\node [above]       at (E3) {\small $E_3$};
\node [above left]  at (E4) {\small $E_4$};
\draw [fill,blue] (E2) circle [radius=0.05];
\draw [blue,draw opacity=0,semithick,-stealth] (E2) -- (arr42);
\draw [blue,draw opacity=0,semithick,-stealth] (E1) -- (arr21);
\draw [blue,draw opacity=0,semithick,-stealth] (E3) -- (arr23);
\draw [blue,semithick,-] (arr24a) -- (arr24b);
\end{tikzpicture}
\\
\end{tabular}
\end{center}
\caption{Illustration of the proof that $a_4 = a_2$ and $c_3 < 0$ imply $b_4 < b_2$ (left panel) and no orbit from the interior of $\Delta_4$ can converge to $E_2$ (right panel).}
\label{fig:no_approach_E2}
\end{figure}
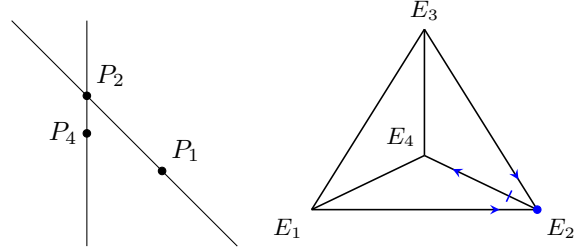

By the application of the rotation $\mathbf{r}_2$, we immediately obtain that the statement of the lemma holds also for $E_1$.

Since $b_3 < b_4$, the eigenvalue at $E_3$ (respectively, $E_4$) in the direction $E_4$ (respectively, $E_3$) is positive. Thus, by Lemma \ref{lem:saturated}, none of $E_3$ and $E_4$ can attract an orbit from $\Delta_4^\circ$, and hence, from $S$.
\end{proof}

Next, we describe the behaviour on the facets of the tetrahedron $\Delta_4$. We are especially interested in the behaviour around the edge equilibria $E_{12}$ and $E_{34}$.

\begin{lemma}\label{lem:facets}
The following four statements hold true.
\begin{enumerate}[(i)]
\item\label{it:facet234} If $c_1 \neq 0$, there is no equilibrium in the open facet $\mathcal{F}_{234}^\circ$ and there, in a neighbourhood of $E_{34}$, we have $\sgn \dot{x}_2 = \sgn(-c_1)\sgn(b_4-b_3)$.
\item\label{it:facet134} If $c_2 \neq 0$, there is no equilibrium in the open facet $\mathcal{F}_{134}^\circ$ and there, in a neighbourhood of $E_{34}$, we have $\sgn \dot{x}_1 = \sgn(-c_2)\sgn(b_4-b_3)$.
\item\label{it:facet124} If $c_3 \neq 0$, there is no equilibrium in the open facet $\mathcal{F}_{124}^\circ$ and there, in a neighbourhood of $E_{12}$, we have $\sgn \dot{x}_4 = \sgn(+c_3)\sgn(a_2-a_1)$.
\item\label{it:facet123} If $c_4 \neq 0$, there is no equilibrium in the open facet $\mathcal{F}_{123}^\circ$ and there, in a neighbourhood of $E_{12}$, we have $\sgn \dot{x}_3 = \sgn(+c_4)\sgn(a_2-a_1)$.
\end{enumerate}
\end{lemma}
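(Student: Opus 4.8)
The plan is to prove \eqref{it:facet234} in full and to obtain \eqref{it:facet134}, \eqref{it:facet124}, \eqref{it:facet123} from it through the dihedral symmetry set up above: the reflection swapping the labels $1$ and $2$ (which sends $c\mapsto(c_2,c_1,c_3,c_4)$) carries $\mathcal F_{234}$, $E_{34}$, $x_2$, $c_1$ to $\mathcal F_{134}$, $E_{34}$, $x_1$, $c_2$, and the $90^\circ$ rotations (together with a reflection) carry the same data to the $E_{12}$-statements \eqref{it:facet124} and \eqref{it:facet123}, with $c_1$ replaced by $c_3$, $c_4$ and $b_4-b_3$ by $a_2-a_1$. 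So I work on $\mathcal F_{234}$, where $x_1=0$ and the flow is the replicator equation of the principal submatrix of $A$ on the indices $2,3,4$. For the first half I would show that any zero $p=(0,p_2,p_3,p_4)$ of this vector field with $p_2,p_3,p_4>0$ satisfies $(Ap)_2=(Ap)_3=(Ap)_4$; subtracting these gives $(a_4-a_3)p_2=(b_4-b_3)(p_3-p_4)$ and $(b_2-b_3)(p_3-p_4)=(a_2-a_3)p_2$, and eliminating $p_3-p_4$ while using $p_2>0$ forces $(b_2-b_3)(a_4-a_3)=(a_2-a_3)(b_4-b_3)$, which is exactly $c_1=0$. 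Hence $c_1\neq0$ excludes an interior equilibrium (the subcase $b_3=b_4$ being read off directly from the first relation).

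For the sign of $\dot x_2$ I would first note that $E_{34}$ is a non-hyperbolic equilibrium of the facet flow. On the edge $\mathcal F_{34}$ one computes $\dot x_3=x_3x_4(b_4-b_3)(x_4-x_3)$, so the eigenvalue along the edge is $-(b_4-b_3)/2$, whereas the external eigenvalue toward $E_2$, namely $(AE_{34})_2-E_{34}^{\mathsf T}AE_{34}$, is zero. Thus the $x_2$-direction is a center direction, and the transverse behaviour is governed by the one-dimensional center manifold at $E_{34}$. I would compute its tangent (its slope in the edge coordinate $\eta=(x_3-x_4)/2$ equals $(a_4-a_3)/(2(b_4-b_3))$, read off from the single nonzero off-diagonal entry of the linearization), substitute $\eta=h(x_2)$ into $\dot x_2=x_2[(Ax)_2-x^{\mathsf T}Ax]$, and retain the quadratic term.

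A short expansion then shows that the reduced equation is
\begin{align*}
\dot x_2=\frac{-c_1}{b_4-b_3}\,x_2^2+O(x_2^3),
\end{align*}
since the quadratic coefficient simplifies, using $(a_3+a_4-2a_2)(b_4-b_3)+(a_4-a_3)(2b_2-b_3-b_4)=-2c_1$, to $-c_1/(b_4-b_3)$. As $x_2>0$ on $\mathcal F_{234}^\circ$, this gives $\sgn\dot x_2=\sgn(-c_1)\sgn(b_4-b_3)$ on the center manifold, which by the reduction principle dictates whether $E_{34}$ attracts or repels transversally. I expect the degeneracy to be the main obstacle: both the value and the transverse linear part of $\dot x_2$ vanish at $E_{34}$, so $\dot x_2$ is in fact not sign-definite in a full facet-neighbourhood ($E_{34}$ is a saddle--node), and the statement must be understood as the sign on the center manifold; the real work is the center-manifold reduction and verifying that the emerging quadratic coefficient collapses to $-c_1/(b_4-b_3)$.
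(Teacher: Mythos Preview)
Your argument is essentially correct, but the paper's proof is considerably simpler and avoids the center-manifold machinery altogether. The paper introduces the function
\[
V(x_2,x_3,x_4)=(b_4-b_3)\log x_2+(b_2-b_4)\log x_3+(b_3-b_2)\log x_4
\]
on $\mathcal F_{234}^\circ$ and checks in one line that $\dot V=-c_1\,x_2$. This single identity gives both halves of (i) at once: since $\dot V$ has constant sign when $c_1\neq 0$, there can be no interior rest point; and since near $E_{34}$ the terms $\log x_3,\log x_4$ are bounded while $\log x_2\to-\infty$, the monotonicity of $V$ forces $x_2$ to move in the direction dictated by $\sgn(-c_1)\sgn(b_4-b_3)$. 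Parts (ii)--(iv) are then obtained by applying $\mathbf r_2$, $\mathbf r_3$, $\mathbf r_1$ respectively.

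Your observation that $\dot x_2$ is not pointwise sign-definite in a full facet-neighbourhood of $E_{34}$ is correct and well spotted; the lemma's phrasing is indeed a little loose on this point. But note that the Lyapunov identity $\dot V=-c_1 x_2$ resolves this more cleanly than your center-manifold computation: $\dot V$ \emph{is} sign-definite on the entire open facet, and this is what is actually used downstream (in the proof that $E_{34}$ does not attract from $S$). Your reduced equation $\dot x_2=-\tfrac{c_1}{b_4-b_3}x_2^2+O(x_2^3)$ recovers the same conclusion on the center manifold, but at the cost of computing the tangent slope, substituting, and simplifying the quadratic coefficient. The Lyapunov route buys you a global monotone quantity with no local expansion at all; your route buys a more explicit local normal form. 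For the purposes of this lemma the former is both shorter and stronger.
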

\begin{proof}
First, we prove \eqref{it:facet234}. The dynamics on the facet $\mathcal{F}_{234}$ is given by the replicator dynamics with matrix
\begin{align*}
B = \begin{pmatrix}
0       & b_2-b_3 & b_4-b_2 \\
a_2-a_3 & 0       & b_4-b_3 \\
a_2-a_4 & b_4-b_3 & 0
\end{pmatrix}
\end{align*}
and variable $\widetilde{x}=(x_2,x_3,x_4)$. We claim that the function $V:\mathcal{F}_{234}^\circ\to\mathbb{R}$, defined by
\begin{align*}
V(x_2,x_3,x_4) = (b_4-b_3)\log x_2 + (b_2-b_4)\log x_3 + (b_3-b_2)\log x_4,
\end{align*}
is a Lyapunov function. Indeed,
\begin{align*}
(V(\widetilde{x}))^{\cdot} = (b_4-b_3)(B\widetilde{x})_2 + (b_2-b_4)(B\widetilde{x})_3 + (b_3-b_2)(B\widetilde{x})_4 = x_2(-c_1).
\end{align*}
Thus, there is no equilibrium in $\mathcal{F}_{234}^\circ$, and in a neighbourhood of $E_{34}$ in $\mathcal{F}_{234}^\circ$, we have $\sgn \dot{x}_2 = \sgn(-c_1)\sgn(b_4-b_3)$.

One can prove \eqref{it:facet134}, \eqref{it:facet124}, and \eqref{it:facet123} in a similar way. However, it is more elegant to say that \eqref{it:facet134}, \eqref{it:facet124}, and \eqref{it:facet123} follow from \eqref{it:facet234} by the application of the rotations $\mathbf{r}_2$, $\mathbf{r}_3$, and $\mathbf{r}_1$, respectively.
\end{proof}

Using (among other things) the previous lemma, we now show that none of the edge equilibria can attract an orbit from $S$.

\begin{lemma}\label{lem:edge_eq_doesnt_attract}
Assume that $\sgn J = \begin{pmatrix}
+ & - \\ + & -
\end{pmatrix}$, $\det J > 0$, and $a_4 \leq a_2 < a_1 \leq a_3$. Consider any of the $9$ cases in Figure~\ref{fig:9_cs}. Then, for each $i,j \in \{1,2,3,4\}$ with $i\neq j$, if $\mathcal{F}_{ij} \subseteq \partial S$ and $E_{ij}$ exists, it cannot attract an orbit from $S$.
\end{lemma}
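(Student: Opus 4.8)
The plan is to run through the six edges of $\Delta_4$ and, for each sign pattern in Figure~\ref{fig:9_cs} in which a given edge lies in $\partial S$, either exhibit a positive transversal eigenvalue at the edge equilibrium (so that Lemma~\ref{lem:saturated} forbids attraction from $\Delta_4^\circ\supseteq S$), or show the edge equilibrium does not exist, or—for $E_{34}$—run a separate Lyapunov argument. Reading off the nine pictures, the edge $\mathcal F_{12}$ never belongs to $\partial S$ (on its interior $x_3=x_4=0$, and since $c_3,c_4\le0$ with not both zero, $Q=+\infty$ there), so $E_{12}$ never needs to be considered; what remain are the ``diagonal'' edges $\mathcal F_{13},\mathcal F_{24}$, the ``side'' edges $\mathcal F_{14},\mathcal F_{23}$, and the edge $\mathcal F_{34}$. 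Throughout we use $a_4\le a_2<a_1\le a_3$, $b_1<b_2$, $b_3<b_4$.

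For $\mathcal F_{24}$: in each case where $\mathcal F_{24}\subseteq\partial S$ (namely $\sgn c\in\{(+,+,-,-),(+,-,-,-),(+,0,-,-),(+,-,-,0),(+,0,-,0)\}$) one reads $c_3<0$ from Figure~\ref{fig:9_cs}, and by the appendix computation \eqref{eq:Gamma_special} the transversal eigenvalue at $E_{24}$ satisfies $\sgn\Gamma_{24}^1=-\sgn c_3>0$; Lemma~\ref{lem:saturated} then shows $E_{24}$ cannot attract from $\Delta_4^\circ$. The edge $\mathcal F_{13}$ is dealt with identically: wherever it lies in $\partial S$ one has $c_4<0$, so $\sgn\Gamma_{13}^2=-\sgn c_4>0$ (this identity being the $\mathbf r_2$-image of the previous one). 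For the side edge $\mathcal F_{14}$, every case with $\mathcal F_{14}\subseteq\partial S$ has $c_2>0$; since $c_2=\Delta(134)$ and $a_4\le a_2<a_1\le a_3$, a short computation forces $b_1<b_4$, so the off-diagonal entries $A_{14}=b_4-b_1$ and $A_{41}=a_4-a_1$ of the edge subsystem have opposite signs and there is no equilibrium in the interior of $\mathcal F_{14}$—the hypothesis ``$E_{14}$ exists'' is vacuous. Symmetrically, wherever $\mathcal F_{23}\subseteq\partial S$ one has $c_1>0$, which forces $b_3<b_2$ and again rules out an interior edge equilibrium.

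The only genuinely degenerate case is $E_{34}$, which lies in $\partial S$ precisely for $\sgn c\in\{(+,-,-,-),(+,-,-,0),(-,+,-,-),(-,+,0,-)\}$; by $\mathbf r_2$ it suffices to treat the patterns with $c_2<0$, i.e.\ the first two. Here both transversal eigenvalues $(AE_{34})_1-E_{34}^\mathsf T AE_{34}$ and $(AE_{34})_2-E_{34}^\mathsf T AE_{34}$ vanish, so Lemma~\ref{lem:saturated} is useless and I instead use the Lyapunov function $V_{134}$ of Lemma~\ref{lem:facets}. The same computation as in that lemma, carried out on the full replicator system rather than on the facet, gives the identity $\dot V_{134}=c_2(x_2-x_1)$ (restricting to $\mathcal F_{134}$ recovers $\dot V_{134}=-c_2x_1$). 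Near $E_{34}$ the surface $S$ is the graph $x_2=O\!\left(x_1^{\,c_1/|c_2|}\right)$, and $c_1/|c_2|>1$ because $c_1+c_2=\det J>0$ by \eqref{eq:c1_c2_detJ}; hence $x_2<x_1$ on $S$ close to $E_{34}$ and $\dot V_{134}=c_2(x_2-x_1)>0$ there. Since $V_{134}=(b_4-b_3)\log x_1+\cdots\to-\infty$ as $x_1\to0$, an orbit of $S$ converging to $E_{34}$ would force $V_{134}\to-\infty$ while $V_{134}$ is eventually increasing, a contradiction. The patterns $(-,+,-,-),(-,+,0,-)$ then follow by $\mathbf r_2$ (equivalently, using $V_{234}$ and $c_1<0$).

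The hard part is this last step: $E_{34}$ sits on the segment of equilibria joining $E_{12}$ and $E_{34}$ and has a fully degenerate transverse linearization, so no eigenvalue argument is available. What rescues it is the combination of the global identity $\dot V_{134}=c_2(x_2-x_1)$ with the fact that $\det J>0$ pins down on which side of the equilibrium segment the surface $S$ lies (equivalently, that $S$ hugs exactly the facet along which $V_{134}$ increases). Finally, the degenerations in which an edge equilibrium collapses onto a corner (e.g.\ $a_2=a_4$) cause no trouble, since those corners are already excluded as attractors by Lemma~\ref{lem:corners_doesnt_attract}.
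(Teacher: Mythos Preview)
Your argument is correct and, for the edges $\mathcal F_{12}$, $\mathcal F_{13}$, $\mathcal F_{24}$, $\mathcal F_{14}$, $\mathcal F_{23}$, it follows exactly the paper's line (one positive external eigenvalue via \eqref{eq:Gamma_special}, respectively nonexistence of the edge equilibrium). The only substantive difference is your treatment of $E_{34}$.

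For $E_{34}$ the paper invokes center manifold theory: it identifies a $2$-dimensional center manifold transversal to $\mathcal F_{34}$, observes that the level surfaces $\{Q=d\}$ meet it in curves tangent to the facet $\mathcal F_{134}$ (because $-c_1/c_2>1$), uses Lemma~\ref{lem:facets}\eqref{it:facet134} plus continuity to get $\dot x_1>0$ on the intersection $\sigma=S\cap\{\text{center manifold}\}$, and then concludes via the reduction principle that $E_{34}$ is a topological saddle in $\overline S$. Your route bypasses all of this: you compute the global identity $\dot V_{134}=c_2(x_2-x_1)$ for the Lyapunov function of Lemma~\ref{lem:facets}, note that on $S$ near $E_{34}$ one has $x_2\sim C\,x_1^{\,c_1/|c_2|}=o(x_1)$ (again using $c_1+c_2=\det J>0$), and derive the contradiction ``$V_{134}\to-\infty$ while $\dot V_{134}>0$''. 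This is strictly more elementary---no center manifolds, no reduction principle---and the asymptotic $x_2=o(x_1)$ is exactly the same estimate the paper uses to establish tangency to $\mathcal F_{134}$, so nothing is lost. The paper's approach, on the other hand, yields the slightly stronger qualitative statement that $E_{34}$ is a topological saddle in $\overline S$, which feeds more directly into the phase-portrait discussion; your Lyapunov argument gives only ``not in any $\omega$-limit set from $S$'', which is however all the lemma claims.
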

\begin{proof}
By Figure~\ref{fig:9_cs}, if $\mathcal{F}_{13} \subseteq \partial S$ then $c_2 > 0$ and $c_4 < 0$. Thus, if $E_{13}$ exists, both of the external eigenvalues $\Gamma_{13}^2$ and $\Gamma_{13}^4$ are positive, see the equations \eqref{eq:Gamma_special}. Thus, by Lemma \ref{lem:saturated}, $E_{13}$ cannot attract an orbit from $\Delta_4^\circ$, and hence, from $S$.

By Figure~\ref{fig:9_cs}, if $\mathcal{F}_{24} \subseteq \partial S$ then $c_1 > 0$ and $c_3 < 0$. Thus, if $E_{24}$ exists, both of the external eigenvalues $\Gamma_{24}^1$ and $\Gamma_{24}^3$ are positive, see the equations \eqref{eq:Gamma_special}. Thus, by Lemma \ref{lem:saturated}, $E_{24}$ cannot attract an orbit from $\Delta_4^\circ$, and hence, from $S$. (Alternatively, one could prove the statement for $E_{24}$ by applying the rotation $\mathbf{r}_2$ to the statement for $E_{13}$.)

By Figure~\ref{fig:9_cs}, if $\mathcal{F}_{14} \subseteq \partial S$ then $c_2 > 0$. Since $c_2 = \Delta(134)$ (see \eqref{eq:c_Delta}), we obtain $b_1 < b_4$ (see the left panel in Figure~\ref{fig:no_eq_F14_F_23}). Since $a_4 < a_1$ also holds, the flow on $\mathcal{F}_{14}$ goes from $E_4$ to $E_1$ and there is no equilibrium in $\mathcal{F}_{14}^\circ$.

By Figure~\ref{fig:9_cs}, if $\mathcal{F}_{23} \subseteq \partial S$ then $c_1 > 0$. Since $c_1 = \Delta(243)$ (see \eqref{eq:c_Delta}), we obtain $b_3 < b_2$ (see the right panel in Figure~\ref{fig:no_eq_F14_F_23}). Since $a_2 < a_3$ also holds, the flow on $\mathcal{F}_{23}$ goes from $E_3$ to $E_2$ and there is no equilibrium in $\mathcal{F}_{23}^\circ$. (Alternatively, one could prove the non-existence of $E_{23}$ by applying the rotation $\mathbf{r}_2$ to the non-existence of $E_{14}$.)

\begin{figure}
\begin{center}
\begin{tabular}{cc}
\begin{tikzpicture}[radius=0.05]
\coordinate (P1) at (1.5,-2  );
\coordinate (P3) at (2.5,-2.5);
\coordinate (P4) at (0.5,-0.5);
\draw (0,0) -- (3,-3);
\draw (0.5,0) -- (0.5,-3);
\draw (2.5,0) -- (2.5,-3);
\draw [fill] (P1) circle; \node [below left]  at (P1) {$P_1$};
\draw [fill] (P3) circle; \node [above right] at (P3) {$P_3$};
\draw [fill] (P4) circle; \node [above right] at (P4) {$P_4$};
\end{tikzpicture}
&
\begin{tikzpicture}[radius=0.05]
\coordinate (P2) at (1.5,-1  );
\coordinate (P3) at (2.5,-2.5);
\coordinate (P4) at (0.5,-0.5);
\draw (0,0) -- (3,-3);
\draw (0.5,0) -- (0.5,-3);
\draw (2.5,0) -- (2.5,-3);
\draw [fill] (P2) circle; \node [above right] at (P2) {$P_2$};
\draw [fill] (P3) circle; \node [below left]  at (P3) {$P_3$};
\draw [fill] (P4) circle; \node [below left]  at (P4) {$P_4$};
\end{tikzpicture}
\\
\end{tabular}
\end{center}
\caption{Illustration of the proof of the facts that $c_2 > 0$ implies $b_1 < b_4$ (left panel) and $c_1 > 0$ implies $b_3 < b_2$ (right panel).}
\label{fig:no_eq_F14_F_23}
\end{figure}
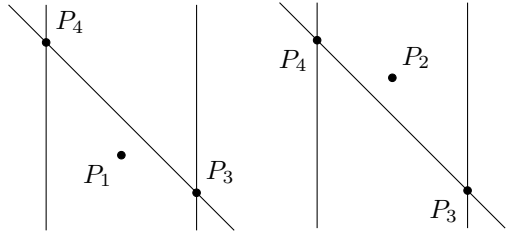

The edge $\mathcal{F}_{12}$ is not part of $\partial S$ in any of the $9$ cases in Figure~\ref{fig:9_cs}.

Assume now that $\mathcal{F}_{34} \subseteq \partial S$. Since $b_4-b_3 > 0$, the edge equilibrium $E_{34}$ is stable within the edge $\mathcal{F}_{34}$, with eigenvalue $-\frac{b_4-b_3}{2} < 0$, see Appendix \ref{sec:app_replicator}. Both of the external eigenvalues $\Gamma^1_{34}$ and $\Gamma^2_{34}$ are zero. This follows from the general formula in Appendix \ref{sec:app_special}, but also from the existence of the line of equilibria $\left(\frac{\eps}{2}, \frac{\eps}{2},  \frac{1-\eps}{2}, \frac{1-\eps}{2}\right)$ for $0 \leq \eps \leq 1$. Therefore, the stable manifold at $E_{34}$ is $1$-dimensional (and is contained in the edge $\mathcal{F}_{34}$) and there is a (not necessarily unique) $2$-dimensional center manifold through $E_{34}$, transversal to the edge $\mathcal{F}_{34}$, and containing the equilibria $\left(\frac{\eps}{2}, \frac{\eps}{2}, \frac{1-\eps}{2}, \frac{1-\eps}{2}\right)$ for small $\eps>0$.

Now assume further that $\sgn(c_1,c_2) = (+,-)$. The invariant surfaces $\{Q=d\}$ can be expressed as $x_2 = \left(d x_1^{-c_1}x_3^{-c_3}x_4^{-c_4}\right)^{\frac{1}{c_2}}$, which is approximately $d'x_1^{-\frac{c_1}{c_2}}$ near $E_{34}$. Since, by the inequality \eqref{eq:c1_c2_detJ}, we have $-\frac{c_1}{c_2}>1$, the invariant surfaces $\{Q=d\}$ are tangent to the facet $\mathcal{F}_{134}$ at $E_{34}$. On the facet $\mathcal{F}_{134}^\circ$ near $E_{34}$, by Lemma \ref{lem:facets} \eqref{it:facet134}, we have $\sgn \dot{x}_1 = \sgn(-c_2)\sgn(b_4-b_3)$, which is positive. The invariant surfaces $\{Q=d\}$ intersect the $2$-dimensional center manifold in a family of curves tangent to the facet $\mathcal{F}_{134}$. Then, by continuity, $\dot{x}_1>0$ holds on these curves near $E_{34}$ (because the only equilibria in $\Delta_4^\circ$ are on the line segment from $E_{12}$ to $E_{34}$), in particular, $\dot{x}_1>0$ on the intersection $\sigma$ of $S$ with the center manifold. Hence, the flow on $\sigma$ moves away from $E_{34}$ (see Figure~\ref{fig:center_manif_E34}). Then, by the reduction principle (see~\cite[Theorem~5.2]{kuznetsov:2004}), applied to the $2$-dimensional flow on $\overline{S}$, $E_{34}$ is topologically a saddle in $\overline{S}$, and hence, does not attract any orbit from $S$. To arrive at the same conclusion in case $\sgn(c_1,c_2) = (-,+)$, one can apply the rotation $\mathbf{r}_2$.

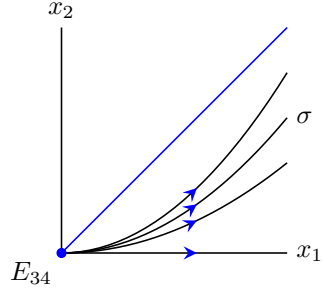
\begin{figure}
\begin{center}
\begin{tikzpicture}[scale=3]
\coordinate (00) at (0,0);
\coordinate (10) at (1,0);
\coordinate (01) at (0,1);
\coordinate (11) at (1,1);
\coordinate (sigma) at (1,0.6);
\coordinate (arraxis) at (0.6,0);
\coordinate (arr1tail) at (0.6-.1,0.4*0.6^2-.2*0.4*0.6);
\coordinate (arr1head) at (0.6   ,0.4*0.6^2           );
\coordinate (arr2tail) at (0.6-.1,0.6*0.6^2-.2*0.6*0.6);
\coordinate (arr2head) at (0.6   ,0.6*0.6^2           );
\coordinate (arr3tail) at (0.6-.1,0.8*0.6^2-.2*0.8*0.6);
\coordinate (arr3head) at (0.6   ,0.8*0.6^2           );
\draw [blue,semithick] (00) -- (11);
\draw [semithick] (00) -- (10);
\draw [semithick] (00) -- (01);
\draw [semithick,domain=0:1] plot (\x,0.4*\x^2);
\draw [semithick,domain=0:1] plot (\x,0.6*\x^2);
\draw [semithick,domain=0:1] plot (\x,0.8*\x^2);
\node [below left] at (00) {$E_{34}$};
\node [right] at (10) {$x_1$};
\node [above] at (01) {$x_2$};
\node [right] at (sigma) {$\sigma$};
\draw [blue,fill,radius=0.02] (00) circle;
\draw [blue,draw opacity=0,very thick,-stealth] (00) -- (arraxis);
\draw [blue,draw opacity=0,very thick,-stealth] (arr1tail) -- (arr1head);
\draw [blue,draw opacity=0,very thick,-stealth] (arr2tail) -- (arr2head);
\draw [blue,draw opacity=0,very thick,-stealth] (arr3tail) -- (arr3head);
\end{tikzpicture}
\end{center}
\caption{The flow on the center manifold at $E_{34}$ near $\mathcal{F}_{134}$.}
\label{fig:center_manif_E34}
\end{figure}
\end{proof}

The following theorem is a special case of \cite[Theorem 3.1]{butler:waltman:1986}.

\begin{theorem} \label{thm:butler}
Consider the ODE~\eqref{eq:ode_A} restricted to $\overline{S}$ with $\det J > 0$. If
\begin{enumerate}[(i)]
\item there are only finitely many equilibria in $\partial S$,
\item no equilibrium in $\partial S$ attracts an orbit from $S$, and
\item $\partial S$ does not form a heteroclinic cycle (between the equilibria in $\partial{S}$)
\end{enumerate}
then the ODE~\eqref{eq:ode_A} restricted to $\overline S$ is permanent.
\end{theorem}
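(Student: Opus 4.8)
The plan is to deduce the statement from the abstract uniform-persistence theorem of Butler and Waltman \cite{butler:waltman:1986}, by checking its hypotheses for the semiflow that \eqref{eq:ode_A} induces on the compact surface $\overline S$, with the invariant set $\partial S$ playing the role of the ``extinction boundary''.

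First I would fix the topological frame. Since $Q$ is a constant of motion and $\overline S = \Delta_4 \cap \{Q=1\}$ is a closed subset of the compact simplex $\Delta_4$, the set $\overline S$ is compact and invariant, and both its relative interior $S$ and its relative boundary $\partial S$ are invariant. The induced semiflow on the compact space $\overline S$ is therefore automatically dissipative, with a global attractor contained in $\overline S$; this supplies the ambient dissipativity required by \cite{butler:waltman:1986}. (The standing assumption $\det J > 0$ guarantees that the surface is genuinely two-dimensional and that, apart from the boundary equilibria and the single interior equilibrium on $S$, no further rest points interfere.)

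The decisive step is to describe the flow on $\partial S$. Being the intersection of the two-dimensional surface $\overline S$ with the two-dimensional boundary of $\Delta_4$, the set $\partial S$ is a one-dimensional invariant set: a finite union of arcs (edges $\mathcal{F}_{ij}$ of the simplex together with the connecting curves $\mathcal{C}$) whose endpoints are equilibria. By hypothesis (i) these equilibria $M_1,\dots,M_n$ are finite in number. Removing them from $\partial S$ leaves finitely many open arcs of compact closure, each carrying a one-dimensional flow with no interior rest point; such a flow is monotone, so every trajectory on it converges to an endpoint equilibrium. Hence $\Omega(\partial S) = \bigcup_{k=1}^n \{M_k\}$, i.e. the finite, pairwise disjoint, isolated equilibria furnish an isolated covering of the boundary limit set. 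Hypothesis (iii), the absence of a heteroclinic cycle among the $M_k$, is precisely the acyclicity of this covering, while hypothesis (ii), that no $M_k$ attracts an orbit from $S$, says exactly that each $M_k$ is a weak repeller, $W^s(M_k)\cap S = \emptyset$.

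With dissipativity together with an acyclic isolated covering of $\Omega(\partial S)$ by weak repellers, \cite[Theorem 3.1]{butler:waltman:1986} yields uniform persistence of the semiflow with respect to $\partial S$. On the compact space $\overline S$, uniform persistence is equivalent to $\partial S$ being a uniform repeller, which (as observed in Section~\ref{sec:S2R}) is exactly permanence of the restricted system. I expect the only genuine work to lie in the structural claim that every boundary trajectory converges to an equilibrium — that the one-dimensional flow on $\partial S$ is gradient-like — so that $\Omega(\partial S)$ reduces to the finite equilibrium set; once this is in place, matching (ii) and (iii) to the weak-repeller and acyclicity conditions of \cite{butler:waltman:1986} is routine bookkeeping.
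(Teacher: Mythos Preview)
Your proposal is correct and takes essentially the same approach as the paper: the paper does not give a proof at all but simply states the theorem as ``a special case of \cite[Theorem 3.1]{butler:waltman:1986}'', so your verification of the Butler--Waltman hypotheses (dissipativity from compactness of $\overline S$, the gradient-like one-dimensional boundary flow giving a finite isolated covering of $\Omega(\partial S)$, and the translation of (ii) and (iii) into the weak-repeller and acyclicity conditions) is exactly the elaboration the paper omits.
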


The following lemma is obvious.

\begin{lemma}\label{lem:at_most_one_edge_eq}
Fix $i,j \in \{1,2,3,4\}$ with $i \neq j$. Then the following two statements hold.
\begin{enumerate}[(i)]
\item If $P_i \neq P_j$ then there is at most one equilibrium in the edge $\mathcal{F}_{ij}^\circ$.
\item If $P_i = P_j$ then $c_k = 0$ for all $k \in \{1,2,3,4\} \setminus \{i,j\}$.
\end{enumerate}
\end{lemma}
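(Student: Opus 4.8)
The plan is to reduce both parts to elementary facts: part (i) to the one-dimensional replicator dynamics on a single edge, and part (ii) to the signed-area reading \eqref{eq:c_Delta} of the numbers $c_k$.

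For (i), I would first note that on the edge $\mathcal{F}_{ij}$ only $x_i$ and $x_j$ are nonzero, so the restriction of \eqref{eq:ode_A} to $\mathcal{F}_{ij}$ is a one-dimensional replicator equation governed by the two off-diagonal entries $A_{ij}$ and $A_{ji}$ of \eqref{eq:A} (the diagonal of $A$ vanishes). Writing $x_j = 1-x_i$, a point of $\mathcal{F}_{ij}^\circ$ is an equilibrium exactly where the payoff difference
\begin{align*}
(Ax)_i - (Ax)_j = A_{ij}x_j - A_{ji}x_i = A_{ij} - (A_{ij}+A_{ji})\,x_i
\end{align*}
vanishes. Being affine in $x_i$, this has at most one zero unless it is identically zero, which happens precisely when $A_{ij}=A_{ji}=0$; in that degenerate case the whole open edge is a line of equilibria. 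Thus (i) is equivalent to the implication $P_i\neq P_j \Rightarrow (A_{ij},A_{ji})\neq(0,0)$.

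The key step is to read the pairs $(A_{ij},A_{ji})$ off \eqref{eq:A}. For the four ``mixed'' edges $\{1,3\}$, $\{1,4\}$, $\{2,3\}$, $\{2,4\}$ one gets entries such as $A_{13}=b_1-b_3$ and $A_{31}=a_3-a_1$, so that $A_{ij}=A_{ji}=0$ holds iff $a_i=a_j$ and $b_i=b_j$, i.e.\ iff $P_i=P_j$; hence $P_i\neq P_j$ already forces at most one interior equilibrium. The delicate---and only nontrivial---point concerns the two ``aligned'' edges, where $A_{12}=A_{21}=a_2-a_1$ and $A_{34}=A_{43}=b_4-b_3$: here the vanishing of both entries is strictly weaker than $P_i=P_j$, so it cannot be excluded by $P_i\neq P_j$ alone and must instead be ruled out by the standing hypotheses $a_2<a_1$ and $b_3<b_4$ of this section, under which these entries are nonzero and the unique interior equilibria are $E_{12}$ and $E_{34}$.

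For (ii), I would use that by \eqref{eq:c_Delta} each $c_k$ is twice the signed area of the triangle on the three points $\{P_1,P_2,P_3,P_4\}\setminus\{P_k\}$. If $P_i=P_j$, then for every $k\notin\{i,j\}$ this triple still contains both $P_i$ and $P_j$; since they coincide, the triangle degenerates to a segment and $c_k=\Delta(\cdots)=0$. (The two remaining values $c_i$ and $c_j$ need not vanish.) This settles (ii) at once.
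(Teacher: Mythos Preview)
Your argument is correct and, in fact, more careful than the paper's: the paper declares the lemma ``obvious'' and gives no proof at all. Your reduction of (i) to the affine payoff difference on the edge and your reading of (ii) directly from the signed-area definition \eqref{eq:c_Delta} are exactly the natural one-line arguments the authors presumably had in mind.

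Your observation about the edges $\{1,2\}$ and $\{3,4\}$ is a genuine point that the paper glosses over. As you note, $A_{12}=A_{21}=a_2-a_1$ depends only on the first coordinates of $P_1,P_2$, so $P_1\neq P_2$ alone does not prevent the entire edge $\mathcal{F}_{12}$ from consisting of equilibria when $a_1=a_2$ but $b_1\neq b_2$. The lemma is really being asserted under the standing hypotheses of Section~\ref{sec:proof_oppositesign} (in particular $a_2<a_1$ and $b_3<b_4$, coming from $\sgn J = \begin{pmatrix}+&-\\+&-\end{pmatrix}$), and your proof makes this dependence explicit. Since the lemma is only invoked in Lemmata~\ref{lem:quadrangle_not_heteroclinic} and~\ref{lem:triangle} under precisely those hypotheses, nothing downstream is affected.
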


The next lemma covers the case $\sgn c = (+,+,-,-)$, i.e., the situation in the $1$st row in Figure~\ref{fig:9_cs}, the quadrangle case. It is permanent.

\begin{lemma}\label{lem:quadrangle_not_heteroclinic}
Assume $\sgn J = \begin{pmatrix}
+ & - \\ + & -
\end{pmatrix}$, $\det J > 0$, and that $a_4 \leq a_2 < a_1 \leq a_3$ and $\sgn c = (+,+,-,-)$. Further, assume that at least one of the two inequalities $b_1 \leq b_3$ and $b_4 \leq b_2$ is violated. Then the ODE~\eqref{eq:ode_exp} is permanent.
\end{lemma}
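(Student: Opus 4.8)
The plan is to deduce permanence from the Butler--Waltman criterion, Theorem~\ref{thm:butler}, applied to the flow on $\overline S$; the three hypotheses of that theorem are what I would verify. Since $\sgn c=(+,+,-,-)$, the boundary $\partial S$ is the $4$-cycle of edges $\mathcal{F}_{14}$, $\mathcal{F}_{42}$, $\mathcal{F}_{23}$, $\mathcal{F}_{31}$ (top row of Figure~\ref{fig:9_cs}), so the equilibria it can contain are the four corners together with possible edge equilibria on $\mathcal{F}_{13}$ and $\mathcal{F}_{24}$; the interior segment of equilibria from $E_{12}$ to $E_{34}$ meets $\{Q=1\}$ only at the interior point $\tfrac14\mathbf1\in S$ and so contributes no boundary equilibria. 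Hypothesis~(i) (finiteness) then follows from Lemma~\ref{lem:at_most_one_edge_eq} (at most one equilibrium on each of $\mathcal{F}_{13}^\circ$, $\mathcal{F}_{24}^\circ$, the four points being distinct) together with the fact, established inside the proof of Lemma~\ref{lem:edge_eq_doesnt_attract}, that $\mathcal{F}_{14}^\circ$ and $\mathcal{F}_{23}^\circ$ carry no equilibria. Hypothesis~(ii) is precisely the content of Lemmas~\ref{lem:corners_doesnt_attract} and~\ref{lem:edge_eq_doesnt_attract}.

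The real work is hypothesis~(iii), that $\partial S$ is not a heteroclinic cycle, and here I would argue directly from the edge eigenvalues read off the matrix $A$ of~\eqref{eq:A}, using that on an edge $\mathcal{F}_{ij}$ the eigenvalue at $E_i$ in the direction $E_j$ is $A_{ji}$. On $\mathcal{F}_{23}$ we have $A_{32}=a_2-a_3<0$ and $A_{23}=b_2-b_3>0$ (the sign of $b_2-b_3$ coming from $c_1>0$, as in Lemma~\ref{lem:edge_eq_doesnt_attract}), giving a monotone connection $E_3\to E_2$; on $\mathcal{F}_{14}$ we have $A_{41}=a_4-a_1<0$ and $A_{14}=b_4-b_1>0$ (from $c_2>0$), giving $E_4\to E_1$. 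These two connections fix the orientation, so the only candidate heteroclinic cycle is $E_1\to E_3\to E_2\to E_4\to E_1$, and whether it closes is decided on $\mathcal{F}_{13}$, where $A_{31}=a_3-a_1\ge0$ and $A_{13}=b_1-b_3$, and on $\mathcal{F}_{24}$, where $A_{42}=a_2-a_4\ge0$ and $A_{24}=b_4-b_2$.

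Now the hypothesis enters. Suppose $b_1>b_3$, so $A_{13}>0$. A short evaluation of $c_2=\Delta(134)$ shows that $a_1=a_3$ would force $c_2<0$, contradicting $c_2>0$; hence $a_1<a_3$ and $A_{31}>0$ as well. Thus both corner eigenvalues on $\mathcal{F}_{13}$ are positive, an interior equilibrium $E_{13}$ exists and is a sink within the edge, and the edge carries $E_1\to E_{13}$ and $E_3\to E_{13}$ rather than any connection between $E_1$ and $E_3$. Symmetrically, if $b_4>b_2$ then $c_1>0$ forces $a_2>a_4$ and $E_{24}$ is an interior sink of $\mathcal{F}_{24}$. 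By assumption at least one of $b_1\le b_3$, $b_4\le b_2$ fails, so at least one of $E_{13}$, $E_{24}$ is present as a sink of the boundary flow; the matching leg of the candidate cycle is therefore absent, every chain of boundary connections terminates at such a sink, and no heteroclinic cycle exists. This is hypothesis~(iii), and Theorem~\ref{thm:butler} gives permanence of the flow on $\overline S$, hence of the ODE~\eqref{eq:ode_exp}.

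The crux, and the only genuinely delicate point, is this last step: one must notice that a violated $b$-inequality does not merely reverse a boundary connection but makes the two corner eigenvalues on the relevant edge simultaneously positive, which produces an interior sink and is what actually destroys the cyclic structure. The small sign computations linking $b_1>b_3$ to $a_1<a_3$ via $c_2>0$ (and the symmetric one) are what guarantee this, and they are where I would be most careful.
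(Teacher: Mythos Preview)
Your proposal is correct and follows essentially the same route as the paper: both invoke Theorem~\ref{thm:butler}, verify hypotheses~(i) and~(ii) via Lemmata~\ref{lem:at_most_one_edge_eq}, \ref{lem:corners_doesnt_attract}, \ref{lem:edge_eq_doesnt_attract}, and then argue that $\partial S$ cannot be a heteroclinic cycle. The only difference is in the level of detail for hypothesis~(iii): the paper simply observes from the edge flow directions (Figure~\ref{fig:quadrangle_not_heteroclinic}) that when $b_3<b_1$ the corner $E_3$ has two outgoing edges in $\partial S$, so no cycle can pass through it (and symmetrically for $b_2<b_4$ at $E_4$), whereas you go one step further and show that the violated $b$-inequality together with the sign of $c_2$ (resp.\ $c_1$) forces both corner eigenvalues on $\mathcal{F}_{13}$ (resp.\ $\mathcal{F}_{24}$) to be strictly positive, producing an interior sink that terminates all boundary chains. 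Your extra care in excluding $a_1=a_3$ (resp.\ $a_2=a_4$) is a nice touch that the paper's figure-based argument leaves implicit.
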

\begin{proof}
We apply Theorem \ref{thm:butler}. By Lemma \ref{lem:at_most_one_edge_eq}, there are only finitely many equilibria on $\partial S$. By Lemmata \ref{lem:corners_doesnt_attract} and \ref{lem:edge_eq_doesnt_attract}, no boundary equilibrium attracts an orbit from $S$. As can be read from Figure~\ref{fig:quadrangle_not_heteroclinic}, neither in case $b_3 < b_1$ nor in case $b_2 < b_4$ the boundary of $\partial S$ forms a heteroclinic cycle.

\begin{figure}
\begin{center}
\begin{tabular}{cc}
\begin{tikzpicture}
\coordinate (E1)  at (0   ,0   );
\coordinate (E2)  at (3   ,0   );
\coordinate (E3)  at (1.5 ,2.4 );
\coordinate (E4)  at (1.5 ,0.72 );
\coordinate (arr14) at (0.375,0.18);
\coordinate (arr23) at (2.625,0.6);
\coordinate (arr31) at (1.125,1.8);
\coordinate (caption) at (0,2.4);
\draw [semithick] (E1) -- (E2);
\draw [semithick] (E3) -- (E4);
\draw [red, semithick] (E1) -- (E4);
\draw [red, semithick] (E4) -- (E2);
\draw [red, semithick] (E2) -- (E3);
\draw [red, semithick] (E3) -- (E1);
\node [below left]  at (E1) {\small $E_1$};
\node [below right] at (E2) {\small $E_2$};
\node [above]       at (E3) {\small $E_3$};
\node [above left]  at (E4) {\small $E_4$};
\draw [blue,draw opacity=0,semithick,-stealth] (E4) -- (arr14);
\draw [blue,draw opacity=0,semithick,-stealth] (E3) -- (arr23);
\draw [blue,draw opacity=0,semithick,-stealth] (E3) -- (arr31);
\node at (caption) {\small if $b_3 < b_1$};
\end{tikzpicture}
&
\begin{tikzpicture}
\coordinate (E1)  at (0   ,0   );
\coordinate (E2)  at (3   ,0   );
\coordinate (E3)  at (1.5 ,2.4 );
\coordinate (E4)  at (1.5 ,0.72 );
\coordinate (arr14) at (0.375,0.18);
\coordinate (arr23) at (2.625,0.6);
\coordinate (arr42) at (1.875,0.54);
\coordinate (caption) at (0,2.4);
\draw [semithick] (E1) -- (E2);
\draw [semithick] (E3) -- (E4);
\draw [red, semithick] (E1) -- (E4);
\draw [red, semithick] (E4) -- (E2);
\draw [red, semithick] (E2) -- (E3);
\draw [red, semithick] (E3) -- (E1);
\node [below left]  at (E1) {\small $E_1$};
\node [below right] at (E2) {\small $E_2$};
\node [above]       at (E3) {\small $E_3$};
\node [above left]  at (E4) {\small $E_4$};
\draw [blue,draw opacity=0,semithick,-stealth] (E4) -- (arr14);
\draw [blue,draw opacity=0,semithick,-stealth] (E3) -- (arr23);
\draw [blue,draw opacity=0,semithick,-stealth] (E4) -- (arr42);
\node at (caption) {\small if $b_2 < b_4$};
\end{tikzpicture}
\\
\end{tabular}
\end{center}
\caption{Under the assumptions of Lemma \ref{lem:quadrangle_not_heteroclinic}, $\partial S$ does not form a heteroclinic cycle.}
\label{fig:quadrangle_not_heteroclinic}
\end{figure}
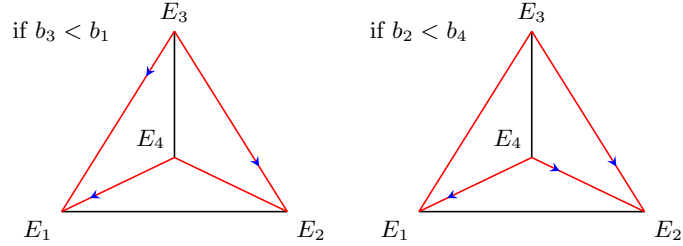
\end{proof}

The next lemma covers the cases, where $\sgn c$ is one of $(+,-,-,-)$, $(+,0,-,-)$, $(-,+,-,-)$, $(0,+,-,-)$, i.e., the situations in the $2$nd and $3$rd rows in Figure~\ref{fig:9_cs}. They are all permanent. Lemmata \ref{lem:quadrangle_not_heteroclinic} and \ref{lem:triangle} together conclude the proof of the fact that $\sgn(c_3,c_4) = (-,-)$ implies permanence, it is the case (C1a) in Theorem \ref{thm:oppositesign}.

\begin{lemma}\label{lem:triangle}
Assume $\sgn J = \begin{pmatrix}
+ & - \\ + & -
\end{pmatrix}$, $\det J > 0$, and that $a_4 \leq a_2 < a_1 \leq a_3$. Further, assume that $\sgn(c_3,c_4) = (-,-)$ and $\sgn(c_1,c_2)\neq(+,+)$. Then the ODE~\eqref{eq:ode_exp} is permanent.
\end{lemma}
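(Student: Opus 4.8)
The plan is to apply the Butler--Waltman criterion, Theorem~\ref{thm:butler}, to the flow of the ODE~\eqref{eq:ode_A} on $\overline S$. By the inequality \eqref{eq:c1_c2_detJ} we have $c_1+c_2=\det J>0$, so the hypotheses $\sgn(c_3,c_4)=(-,-)$ and $\sgn(c_1,c_2)\neq(+,+)$ leave exactly the four sign patterns $(+,-,-,-)$, $(-,+,-,-)$, $(+,0,-,-)$, $(0,+,-,-)$, i.e.\ the $2$nd and $3$rd rows of Figure~\ref{fig:9_cs}. Since the rotation $\mathbf r_2$ sends $(c_1,c_2,c_3,c_4)$ to $(c_2,c_1,c_4,c_3)$, it interchanges $(+,-,-,-)$ with $(-,+,-,-)$ and $(+,0,-,-)$ with $(0,+,-,-)$; hence it suffices to treat $\sgn c=(+,-,-,-)$ and $\sgn c=(+,0,-,-)$, and to verify the three hypotheses of Theorem~\ref{thm:butler}.

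First I would check finiteness of the boundary equilibria and that none attracts from $S$. Lemma~\ref{lem:at_most_one_edge_eq} bounds the number of edge equilibria, and Lemma~\ref{lem:facets} rules out equilibria in the open facets, \emph{except} on $\mathcal F_{134}$ in the degenerate case $(+,0,-,-)$: there $c_2=0$ makes $P_1,P_3,P_4$ collinear and the facet inherits a whole line $L$ of equilibria (the common solution of $(Ax)_1=(Ax)_3=(Ax)_4$). Nonetheless $\partial S\cap\mathcal F_{134}=\mathcal C_{34}^1$ meets $L$ in the single point $E^*$, since $Q$ increases strictly from $0$ to $\infty$ along $L$; thus $\partial S$ carries only finitely many equilibria. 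That the corners do not attract from $S$ is Lemma~\ref{lem:corners_doesnt_attract}, and for the edge equilibria it is Lemma~\ref{lem:edge_eq_doesnt_attract}, so the only genuinely new point is $E^*$.

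The crux is the transverse eigenvalue at $E^*$. As $E^*\in L$, the relations $(AE^*)_1=(AE^*)_3=(AE^*)_4$ hold, whence the eigenvalue in the $x_2$-direction is $\Gamma_2=(AE^*)_2-(E^*)^{\mathsf T}AE^*=(AE^*)_2-(AE^*)_1=(a_2-a_1)E^*_1+(b_2-b_1)(E^*_3-E^*_4)$. Substituting $E^*_3-E^*_4=\frac{a_3-a_4}{b_4-b_3}E^*_1$, which comes from $(AE^*)_3=(AE^*)_4$, one obtains the clean identity
\begin{align*}
\Gamma_2=\frac{E^*_1}{b_4-b_3}\bigl[(a_2-a_1)(b_4-b_3)+(a_3-a_4)(b_2-b_1)\bigr]=\frac{E^*_1}{b_4-b_3}\det J>0,
\end{align*}
so $E^*$ repels into $S$ and, by Lemma~\ref{lem:saturated}, cannot attract an orbit from $S$.

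It remains to verify that $\partial S$ is not a heteroclinic cycle, which I would read off from the flow exactly as in Lemma~\ref{lem:quadrangle_not_heteroclinic}. On $\mathcal F_{23}$ the flow runs $E_3\to E_2$ (from $c_1>0$, giving $b_3<b_2$, and $a_2<a_3$), while the side joining $E_3$ and $E_4$ carries an interior sink: $E_{34}$ on $\mathcal F_{34}$ in case $(+,-,-,-)$ (stable within the edge because $b_4-b_3>0$), and $E^*$ on $\mathcal C_{34}^1$ in case $(+,0,-,-)$, where both arcs flow into $E^*$ ($E_3$ being a source on $\mathcal F_{134}$ and $L$ transversally attracting). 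Hence within $\partial S$ no orbit joins $E_3$ to $E_4$, $E_3$ has no incoming boundary orbit, and the remaining vertices $E_2,E_4$ lie on the single edge $\mathcal F_{24}$, so no heteroclinic cycle can close up. All three hypotheses of Theorem~\ref{thm:butler} then give permanence. The main obstacle is the degenerate case $(+,0,-,-)$, where the facet line of equilibria $L$ and the non-hyperbolic boundary point $E^*$ must be controlled; the identity $\Gamma_2=\frac{E^*_1}{b_4-b_3}\det J$ is what resolves it, showing $E^*$ is transversally repelling despite its degeneracy.
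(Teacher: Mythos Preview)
Your proof is correct and follows the same overall strategy as the paper: reduce to the two cases $\sgn c=(+,-,-,-)$ and $(+,0,-,-)$ via $\mathbf r_2$, then verify the three hypotheses of Theorem~\ref{thm:butler}. The one genuine difference is your treatment of the transverse eigenvalue at the facet equilibrium $E^*$ (the paper's $E_{34}^1$): you compute it directly and obtain the clean identity $\Gamma_2=\tfrac{E^*_1}{b_4-b_3}\det J>0$, whereas the paper argues that the external eigenvalue in direction $E_2$ varies linearly along the line of equilibria from $\Gamma_{34}^2=0$ at $E_{34}$ to $\Gamma_{13}^2>0$ at $E_{13}$ (using~\eqref{eq:Gamma_special} and $\sgn c_4=-1$), and is therefore positive at the intermediate point. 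Your formula is more direct and makes the role of $\det J>0$ explicit; the paper's interpolation argument, on the other hand, avoids any computation by appealing to already-tabulated signs of the edge eigenvalues. A minor point: your claim that $Q$ is strictly monotone along the line $L$ is not actually needed---finiteness of $L\cap\mathcal C_{34}^1$ follows from analyticity, and your eigenvalue identity applies to every intersection point.
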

\begin{proof}
By the inequality \eqref{eq:c1_c2_detJ} and the assumption $\sgn(c_1,c_2)\neq(+,+)$, $\sgn(c_1,c_2)$ is one of $(+,-)$, $(+,0)$, $(-,+)$, $(0,+)$.

To prove permanence in case $\sgn c = (+,-,-,-)$, we apply Theorem \ref{thm:butler}. By Lemma \ref{lem:at_most_one_edge_eq}, there are only finitely many equilibria on $\partial S$. By Lemmata \ref{lem:corners_doesnt_attract} and \ref{lem:edge_eq_doesnt_attract}, no boundary equilibrium attracts an orbit from $S$. Since there exists an edge equilibrium on $\partial S$, namely $E_{34}$, $\partial S$ does not form a heteroclinic cycle.

To prove permanence in case $\sgn c = (-,+,-,-)$, one can argue similarly as in the above paragraph. Alternatively, one may apply the rotation $\mathbf{r}_2$ to the previous case.

Next, we prove permanence in case $\sgn c = (+,0,-,-)$. Then $\partial S$ consists of the two edges $\mathcal{F}_{42}$, $\mathcal{F}_{23}$ and the curve
\begin{align*}
\mathcal{C}_{34}^1 = \{x \in \Delta_4~|~ x_2 = 0 \text{ and } x_1^{|c_1|} = x_3^{|c_3|}x_4^{|c_4|}\} \subseteq \mathcal{F}_{134},
\end{align*}
which connects $E_3$ and $E_4$.
Note that $a_1 = a_3$ is not possible, because $c_2 = 0$ and $c_4 \neq 0$. Since $a_4 < a_1 < a_3$ together with $b_3 < b_4$ and $c_2 = 0$ (i.e., the three points $P_1$, $P_3$, $P_4$ lie on a line) imply that $b_3 < b_1 < b_4$, see the left panel in Figure~\ref{fig:triangle_134}. Thus, the sign of the matrix corresponding to the facet $\mathcal{F}_{134}$ is
\begin{align*}
\begin{pmatrix}
0 & + & + \\
+ & 0 & + \\
- & + & 0 
\end{pmatrix}.
\end{align*}
Hence, $E_3$ is repelling both in the direction of $E_1$ and of $E_4$, and, similarly, $E_4$ is repelling both in the direction of $E_1$ and of $E_3$. Thus, the flow on $\mathcal{C}_{34}^1$ goes away both from $E_3$ and from $E_4$. Furthermore, there is no equilibrium on $\mathcal{F}_{14}$, the flow goes from $E_4$ to $E_1$, and there exists an equilibrium $E_{13}$ on the edge $\mathcal{F}_{13}$. Using $c_2 = 0$, a short calculation shows that there is a line of equilibria connecting $E_{13}$ and $E_{34}$, and this line intersects $\mathcal{C}_{34}^1$ at a unique equilibrium $E_{34}^1$. (All equilibria in $\mathcal{F}_{134}^\circ$ lie on this line.) See the right panel in Figure~\ref{fig:triangle_134} for the dynamics on the facet $\mathcal{F}_{134}$. The external eigenvalue at $E_{13}$ in the direction $E_2$ is positive (because $\sgn \Gamma_{13}^2  = - \sgn c_4  = +1$), while the external eigenvalue at $E_{34}$ in the direction $E_2$ is zero, see Appendix \ref{sec:app_special}. It is a general fact that the eigenvalue in the direction of $E_2$ changes linearly from $\Gamma_{34}^2$ to $\Gamma_{13}^2$ while travelling on the line of equilibria from $E_{34}$ to $E_{13}$. Thus, the external eigenvalue at $E_{34}^1$ is positive, and hence, $E_{34}^1$ is not saturated. Therefore, by Lemma \ref{lem:saturated}, it cannot attract an orbit from $\Delta_4^\circ$, and hence, from $S$. Permanence in case $\sgn c = (+,0,-,-)$ then follows immediately from Theorem \ref{thm:butler}.

To prove permanence in case $\sgn c = (0,+,-,-)$, one can argue similarly as in the above paragraph. Alternatively, one may apply the rotation $\mathbf{r}_2$ to the previous case.

\begin{figure}
\begin{center}
\begin{tabular}{cc}
\begin{tikzpicture}[radius=0.05]
\coordinate (P1) at (1.5,-1.5);
\coordinate (P3) at (2.5,-2.5);
\coordinate (P4) at (0.5,-0.5);
\draw (0,0) -- (3,-3);
\draw [fill] (P1) circle; \node [above right] at (P1) {$P_1$};
\draw [fill] (P3) circle; \node [above right] at (P3) {$P_3$};
\draw [fill] (P4) circle; \node [above right] at (P4) {$P_4$};
\end{tikzpicture}
&
\begin{tikzpicture}[scale=1,radius=0.05]
\coordinate (E1)  at (0,0);
\coordinate (E3)  at (3,3);
\coordinate (E4)  at (3,0);
\coordinate (E34)  at (3,1.5);
\coordinate (E13)  at (1.25,1.25);
\coordinate (arr34) at (3,2.25);
\coordinate (arr43) at (3,0.75);
\coordinate (arr14) at (0.75,0);
\coordinate (arr41) at (2.25,0);
\coordinate (arr13) at (0.75,0.75);
\coordinate (arr31) at (2.25,2.25);
\coordinate (arr3curvetail) at (-0.9239*2.1213+4.5+0.4081,0.3827*2.1213+1.5+0.9129);
\coordinate (arr3curvehead) at (-0.9239*2.1213+4.5       ,0.3827*2.1213+1.5       );
\coordinate (arr4curvetail) at (-0.9239*2.1213+4.5+0.4081,-0.3827*2.1213+1.5-0.9129);
\coordinate (arr4curvehead) at (-0.9239*2.1213+4.5       ,-0.3827*2.1213+1.5       );
\coordinate (E341) at (2.3805,2.3805/7+15/14);
\draw [semithick] (E1) -- (E4);
\draw [semithick] (E4) -- (E3);
\draw [semithick] (E3) -- (E1);
\draw [blue,semithick] (E34) -- (E13);
\draw [red, semithick] (E3) to [out=-135,in=135] (E4);
\node [below left]  at (E1) {\small $E_1$};
\node [above right] at (E3) {\small $E_3$};
\node [below right] at (E4) {\small $E_4$};
\node [right]       at (E34) {\small $E_{34}$};
\node [above left]  at (E13) {\small $E_{13}$};
\node [below left]  at (E341) {\small $E_{34}^1$};
\draw [fill,blue] (E1) circle;
\draw [fill,blue] (E3) circle;
\draw [fill,blue] (E4) circle;
\draw [fill,blue] (E34) circle;
\draw [fill,blue] (E13) circle;
\draw [fill,blue] (E341) circle;
\draw [blue,draw opacity=0,semithick,-stealth] (E3) -- (arr34);
\draw [blue,draw opacity=0,semithick,-stealth] (E4) -- (arr43);
\draw [blue,draw opacity=0,semithick,-stealth] (E4) -- (arr14);
\draw [blue,draw opacity=0,semithick,-stealth] (E4) -- (arr41);
\draw [blue,draw opacity=0,semithick,-stealth] (E1) -- (arr13);
\draw [blue,draw opacity=0,semithick,-stealth] (E3) -- (arr31);
\draw [blue,draw opacity=0,semithick,-stealth] (arr3curvetail) -- (arr3curvehead);
\draw [blue,draw opacity=0,semithick,-stealth] (arr4curvetail) -- (arr4curvehead);
\end{tikzpicture}
\\
\end{tabular}
\end{center}
\caption{The relative position of the three points $P_1$, $P_3$, $P_4$ and the behaviour on the facet $\mathcal{F}_{134}$ when $\sgn c = (+,0,-,-)$ and $a_4 \leq a_2 < a_1 \leq a_3$.}
\label{fig:triangle_134}
\end{figure}
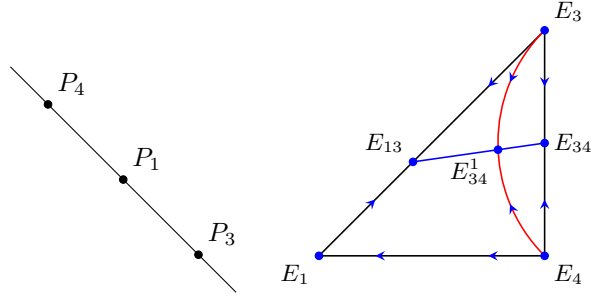
\end{proof}

The next lemma covers the cases, where $\sgn c$ is one of $(+,-,-,0)$ and $(-,+,0,-)$, i.e., the situations in the left panels in the $4$th and $5$th rows in Figure~\ref{fig:9_cs}. It is the case (C1b) in Theorem \ref{thm:oppositesign}.

\begin{lemma} \label{lem:c3_or_c4_zero}
Assume $\sgn J = \begin{pmatrix}
+ & - \\ + & -
\end{pmatrix}$, $\det J > 0$, and that $a_4 \leq a_2 < a_1 \leq a_3$ and either $\sgn c = (+,-,-,0)$ or $\sgn c = (-,+,0,-)$. Then the ODE~\eqref{eq:ode_exp} is permanent if and only if
\begin{align} \label{eq:c3_or_c4_zero_permanence_condition}
-\frac{a_1-a_2}{b_1-b_2} < \frac{(L+1)^{L+1}}{L^L},
\text{ where } L = \begin{cases} \frac{c_2}{c_3}, & \text{ if } \sgn c = (+,-,-,0), \\ \frac{c_1}{c_4}, & \text{ if } \sgn c = (-,+,0,-).\end{cases}
\end{align}
\end{lemma}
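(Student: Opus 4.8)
The plan is to work entirely in the replicator picture on the invariant surface $\overline S$, and to collapse the two sign patterns to one. Since the rotation $\mathbf r_2$ sends $(c_1,c_2,c_3,c_4)$ to $(c_2,c_1,c_4,c_3)$, it interchanges $\sgn c=(+,-,-,0)$ with $\sgn c=(-,+,0,-)$ and carries $L=c_2/c_3$ to $L=c_1/c_4$, so it suffices to treat $\sgn c=(+,-,-,0)$. Here $c_4=\Delta(123)=0$ means $P_1$ lies on the segment $P_2P_3$, and writing $P_1=\tfrac{L}{L+1}P_2+\tfrac1{L+1}P_3$ identifies $L=c_2/c_3>0$ with the barycentric position of $P_1$. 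I would first record the geometry of $\overline S$: it is the graph $x_4=1-x_1-x_2-x_3$ over the (scale‑invariant) cone $x_1^{c_1}=x_2^{|c_2|}x_3^{|c_3|}$, i.e.\ a curved triangle with corners $E_2,E_3,E_4$ whose sides are the edges $\mathcal F_{34}$, $\mathcal F_{24}$ and the heteroclinic arc $\mathcal C_{23}^1$ running from $E_3$ to $E_2$, with the degenerate equilibrium $E_{34}$ sitting in the interior of $\mathcal F_{34}$ and the positive equilibrium $\tfrac14\mathbf 1$ lying in $S$.

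By Lemmata \ref{lem:corners_doesnt_attract} and \ref{lem:edge_eq_doesnt_attract} none of the corners or edge equilibria attracts an orbit from $S$, and $E_{34}$ is a non‑hyperbolic saddle of the flow on $\overline S$ that repels into $S$. What is left is purely transverse: the flow rotates around $\tfrac14\mathbf1$, and the question is whether large orbits spiral outward onto $\partial S$ or back inward. Because $E_{34}$ lies on the line of equilibria its transverse eigenvalue vanishes, so Theorem \ref{thm:butler} and a naive average Lyapunov function $x_1^{\rho}x_4^{\sigma}$ (which would give $\dot P/P=0$ at $E_{34}$) are inconclusive; the decision is made by the rate at which an outward‑spiralling orbit is squeezed past the degenerate side $\mathcal C_{23}^1$. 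Accordingly I would set up a cross‑section $\Sigma$ transverse to $\partial S$ and study the transition map an orbit undergoes while shadowing $E_3\to E_2$, incorporating the slow passage near $E_{34}$ and the cone point $E_4$.

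The core computation is to evaluate, for an orbit crossing $\mathcal C_{23}^1$ at relative position $s\in(0,1)$ along the side, the multiplier controlling its distance to $\partial S$ over one near‑boundary transit. Using the constant of motion $Q$ to linearise the in‑surface motion and the corner eigenvalues (all differences of the $a_i$ and $b_i$ recorded above), this multiplier should come out proportional to $-\tfrac{a_1-a_2}{b_1-b_2}\,s^{L}(1-s)$. Exactly as the eigenvalue‑ratio test of \cite{hofbauer:1981} underlying Theorem \ref{thm:heteroclinic}, $\partial S$ is repelling—and, together with the already‑established repulsion at $E_3$ and $E_{34}$, the system permanent—precisely when this quantity is below $1$, while a value above $1$ produces an orbit with large initial data spiralling outward onto $\partial S$. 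The sharp threshold is obtained by optimising over the continuum of entry positions: since $\max_{s\in(0,1)}s^{L}(1-s)=\tfrac{L^{L}}{(L+1)^{L+1}}$ (attained at $s=\tfrac{L}{L+1}$), the condition $-\tfrac{a_1-a_2}{b_1-b_2}\max_{s}s^{L}(1-s)<1$ is exactly \eqref{eq:c3_or_c4_zero_permanence_condition}.

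The step I expect to be the main obstacle is this transition‑map computation at the degenerate side. Because $E_{34}$ sits on a line of equilibria and $E_4$ is a cone point of $\overline S$, the passage is non‑hyperbolic, so the plain product‑of‑eigenvalues bookkeeping of \cite{hofbauer:1981} does not transfer verbatim; instead of a single eigenvalue ratio at a hyperbolic vertex one obtains a whole family indexed by $s$, which is why a supremum rather than a product appears. Making the factor $s^{L}(1-s)$ rigorous—controlling the higher‑order center‑manifold terms at $E_{34}$, handling the cone geometry at $E_4$, and proving that the supremum over $s$ is genuinely the sharp threshold in both the permanent and the non‑permanent direction—is where the real work lies.
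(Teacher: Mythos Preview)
Your proposal misses the key structural feature that drives the paper's argument, and the mechanism you describe for non-permanence is not the actual one.

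Because $c_4=\Delta(123)=0$, the facet $\mathcal F_{123}$ carries not just the isolated edge equilibria $E_{12}$ and $E_{13}$ but an entire \emph{line of equilibria} $\mathcal E$ joining them (this is the replicator analogue of the fact that $P_1,P_2,P_3$ are collinear). The curve $\mathcal C_{23}^1\subset\mathcal F_{123}$ is therefore \emph{not} in general a single heteroclinic arc from $E_3$ to $E_2$: it may meet $\mathcal E$ in $0$, $1$, or $2$ points, and each intersection is an equilibrium on $\partial S$. The paper's proof is then almost immediate: if $\mathcal C_{23}^1\cap\mathcal E=\varnothing$ there are only finitely many equilibria on $\partial S$, none attracts from $S$, and there is no heteroclinic cycle (the presence of $E_{34}$ already blocks one), so Theorem~\ref{thm:butler} gives permanence. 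If they do intersect at some $E_{23}^1$, its external eigenvalue toward $E_4$ is a convex combination of $\Gamma_{12}^4=0$ and $\Gamma_{13}^4<0$, hence negative, so the Stable Manifold Theorem on $\overline S$ produces an interior orbit converging to $E_{23}^1$ and permanence fails. The inequality \eqref{eq:c3_or_c4_zero_permanence_condition} is simply the condition $Q\neq 1$ along $\mathcal E$, obtained by parametrising $\mathcal E$ and minimising $\log Q$; the extremum produces exactly the $L^L/(L+1)^{L+1}$ factor.

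Your picture of orbits ``spiralling outward onto $\partial S$'' is therefore the wrong failure mode: when the system is not permanent, an orbit converges to a fixed boundary equilibrium, and in particular your proposed transition map past $\mathcal C_{23}^1$ is not even defined there. The $s^L(1-s)$ optimisation you arrive at is formally the same computation as the paper's minimisation of $\log Q$ along $\mathcal E$ (your $s$ is essentially the barycentric parameter on $\mathcal E$), but interpreted as a return-map multiplier rather than as an intersection test it lacks a mechanism. The hard non-hyperbolic passage analysis you flag as the main obstacle is not needed at all.
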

\begin{proof}
Assume first that $\sgn c = (+,-,-,0)$. Then $\partial S$ consists of the two edges $\mathcal{F}_{34}$, $\mathcal{F}_{42}$ and the curve
\begin{align*}
\mathcal{C}_{23}^1 = \{x \in \Delta_4~|~ x_4 = 0 \text{ and } x_1^{|c_1|} = x_2^{|c_2|}x_3^{|c_3|}\} \subseteq \mathcal{F}_{123},
\end{align*}
which connects $E_2$ and $E_3$. Note that $a_1 = a_3$ is not possible, because $c_4 = 0$ and $c_2 \neq 0$. Since $a_2 < a_1 < a_3$ together with $b_1 < b_2$ and $c_4 = 0$ (i.e., the three points $P_1$, $P_2$, $P_3$ lie on a line) imply that $b_3 < b_1 < b_2$, see the upper left panel in Figure~\ref{fig:triangle_123}. Thus, the sign of the matrix corresponding to the triangle $\mathcal{F}_{123}$ is
\begin{align*}
\begin{pmatrix}
0 & - & + \\
- & 0 & + \\
+ & - & 0 
\end{pmatrix}.
\end{align*}
Hence, $E_1$ is a saddle, $E_2$ is an attractor, and $E_3$ is a repeller. Thus, the flow on $\mathcal{C}_{23}^1$ goes away from $E_3$ and goes towards $E_2$. Furthermore, there is no equilibrium on $\mathcal{F}_{23}$, the flow goes from $E_3$ to $E_2$, and there exist equilibria $E_{13}$ and $E_{12}$ on the edges $\mathcal{F}_{13}$ and $\mathcal{F}_{12}$, respectively. Using $c_4 = 0$, a short calculation shows that there is a line of equilibria $\mathcal{E}$ connecting $E_{12}$ and $E_{13}$. (All equilibria in $\mathcal{F}_{123}^\circ$ lie on $\mathcal{E}$.) The line $\mathcal{E}$ intersects $\mathcal{C}_{23}^1$ at either $0$, $1$, or $2$ points. See Figure~\ref{fig:triangle_123} for the dynamics on the facet $\mathcal{F}_{123}$ in these three cases.

If the intersection of $\mathcal{E}$ and $\mathcal{C}_{23}^1$ is empty then, by Theorem \ref{thm:butler}, permanence of the ODE~\eqref{eq:ode_A} follows. On the other hand, if $\mathcal{E}$ and $\mathcal{C}_{23}^1$ intersect each other, say at $E_{23}^1$, then the system is not permanent. Indeed, in this case the external eigenvalue at $E_{23}^1$ in the direction $E_4$ is negative, since it is a convex combination of $\Gamma_{12}^4=0$ and $\Gamma_{13}^4$, the latter being negative (because, by \eqref{eq:Gamma_special}, $\sgn \Gamma_{13}^4 = +\sgn c_2 = -1$). Applying the Stable Manifold Theorem to the flow restricted to $\overline{S}$, there exists at least one orbit in $S$ that converges to $E_{23}^1$. Thus, the ODE~\eqref{eq:ode_exp} is permanent if only if $\mathcal{C}_{23}^1$ does not intersect $\mathcal{E}$. The latter is  equivalent to
\begin{align*}
x_1^{c_1}x_2^{c_2}x_3^{c_3} \not= 1 \text{ for all } x \in \mathcal{E},
\end{align*}
or 
\begin{align} \label{eq:empty_intersection}
c_1 \log x_1 + c_2 \log x_2 + c_3 \log x_3  \not= 0 \text{ for all } x \in \mathcal{E}.
\end{align}
With $\lambda = \frac{b_1-b_3}{a_3-a_1 + b_1-b_3}$, we have $E_{13} = (\lambda,0,1-\lambda, 0)$. Since $E_{12} = \left(\frac{1}{2},\frac{1}{2},0,0\right)$, the statement \eqref{eq:empty_intersection} is equivalent to $g(\eps) \neq 0$ for all $0<\eps<1$, where
\begin{align} \label{eq:eps_lambda}
g(\eps)= c_1  \log \left(\eps\lambda+\frac{1-\eps}{2}\right) + c_2 \log \left(\frac{1-\eps}{2}\right) + c_3 \log \left(\eps(1-\lambda)\right).
\end{align}
Since $g(\eps) \to + \infty$ as $\eps \to 0$ or $1$, this is equivalent to $g(\eps) > 0$ for all $ 0 < \eps < 1$. A short calculation shows that $g$ attains its minimum at $\eps^* = \frac{c_3}{c_3 + 2\lambda c_2}$. Note that
\begin{align*}
g(\eps^*) = c_3 \log \frac{1-\lambda}{\lambda} + \log \frac{(-c_2)^{c_2}(-c_3)^{c_3}}{(-c_2-c_3)^{c_2+c_3}}.
\end{align*}
Since $-\frac{a_1-a_2}{b_1-b_2}=-\frac{a_1-a_3}{b_1-b_3}=\frac{1-\lambda}{\lambda}$, the fact 
$g(\eps^*)$ is positive is equivalent to the upper case in \eqref{eq:c3_or_c4_zero_permanence_condition}.

To prove the statement in case $\sgn c = (-,+,0,-)$, one can argue similarly. Alternatively, one may apply the rotation $\mathbf{r}_2$ to the case $\sgn c = (+,-,-,0)$.

\begin{figure}
\begin{center}
\begin{tabular}{cc}
\begin{tikzpicture}[radius=0.05]
\coordinate (P1) at (1.5,-1.5);
\coordinate (P2) at (0.5,-0.5);
\coordinate (P3) at (2.5,-2.5);
\draw (0,0) -- (3,-3);
\draw [fill] (P1) circle; \node [above right] at (P1) {$P_1$};
\draw [fill] (P2) circle; \node [above right] at (P2) {$P_2$};
\draw [fill] (P3) circle; \node [above right] at (P3) {$P_3$};
\end{tikzpicture}
&
\begin{tikzpicture}[scale=1,radius=0.05]
\coordinate (E1)  at (0,0);
\coordinate (E2)  at (3,0);
\coordinate (E3)  at (1.5,2.5981);
\coordinate (E12)  at (0.5*3,0);
\coordinate (E13)  at (0.5*1.5,0.5*2.5981);
\coordinate (arr32) at (0.25*3+0.75*1.5,0.75*2.5981);
\coordinate (arr23) at (0.75*3+0.25*1.5,0.25*2.5981);
\coordinate (arr13) at (0.125*3,0.25*2.5981);
\coordinate (arr12) at (0.25*3,0);
\coordinate (arr21) at (0.75*3,0);
\coordinate (arr31) at (0.375*3,0.75*2.5981);
\coordinate (arr3curvetail) at (1.315+.2165,1.9486+1.7185);
\coordinate (arr3curvehead) at (1.315,1.9486);
\coordinate (arr2curvetail) at (2.3-1,0.21+0.5);
\coordinate (arr2curvehead) at (2.3  ,0.21);
\draw [semithick] (E1) -- (E2);
\draw [semithick] (E2) -- (E3);
\draw [semithick] (E3) -- (E1);
\draw [blue,semithick] (E12) -- (E13);
\draw [red, semithick] (E3) to [out=-120,in=180] (E2);
\node [below left]  at (E1) {\small $E_1$};
\node [below right] at (E2) {\small $E_2$};
\node [above]       at (E3) {\small $E_3$};
\node [below]       at (E12) {\small $E_{12}$};
\node [above left]  at (E13) {\small $E_{13}$};
\draw [fill,blue] (E1) circle;
\draw [fill,blue] (E2) circle;
\draw [fill,blue] (E3) circle;
\draw [fill,blue] (E12) circle;
\draw [fill,blue] (E13) circle;
\draw [blue,draw opacity=0,semithick,-stealth] (E3) -- (arr32);
\draw [blue,draw opacity=0,semithick,-stealth] (E3) -- (arr23);
\draw [blue,draw opacity=0,semithick,-stealth] (E2) -- (arr12);
\draw [blue,draw opacity=0,semithick,-stealth] (E1) -- (arr13);
\draw [blue,draw opacity=0,semithick,-stealth] (E3) -- (arr31);
\draw [blue,draw opacity=0,semithick,-stealth] (E1) -- (arr21);
\draw [blue,draw opacity=0,semithick,-stealth] (arr3curvetail) -- (arr3curvehead);
\draw [blue,draw opacity=0,semithick,-stealth] (arr2curvetail) -- (arr2curvehead);
\end{tikzpicture}
\\
\begin{tikzpicture}[scale=1,radius=0.05]
\coordinate (E1)  at (0,0);
\coordinate (E2)  at (3,0);
\coordinate (E3)  at (1.5,2.5981);
\coordinate (E12)  at (0.7*3,0);
\coordinate (E13)  at (0.7*1.5,0.7*2.5981);
\coordinate (aux) at (0.7*2.25,0.7*2.5981/2);
\coordinate (arr32) at (0.25*3+0.75*1.5,0.75*2.5981);
\coordinate (arr23) at (0.75*3+0.25*1.5,0.25*2.5981);
\coordinate (arr13) at (0.125*3,0.25*2.5981);
\coordinate (arr12) at (0.25*3,0);
\coordinate (arr21) at (0.82*3,0);
\coordinate (arr31) at (0.8*3/2,0.8*2.5981);
\coordinate (arr3curvetail) at (1.295+.2,1.9486+1.7);
\coordinate (arr3curvehead) at (1.295,1.9486);
\coordinate (arr2curvetail) at (2.3-1,0.16+0.5);
\coordinate (arr2curvehead) at (2.3  ,0.16);
\draw [semithick] (E1) -- (E2);
\draw [semithick] (E2) -- (E3);
\draw [semithick] (E3) -- (E1);
\draw [blue,semithick] (E12) -- (E13);
\draw [red, semithick] (E3) to [out=-120,in=120] (aux);
\draw [red, semithick] (aux) to [out=-60,in=180] (E2);
\node [below left]  at (E1) {\small $E_1$};
\node [below right] at (E2) {\small $E_2$};
\node [above]       at (E3) {\small $E_3$};
\node [below]       at (E12) {\small $E_{12}$};
\node [above left]  at (E13) {\small $E_{13}$};
\draw [fill,blue] (E1) circle;
\draw [fill,blue] (E2) circle;
\draw [fill,blue] (E3) circle;
\draw [fill,blue] (E12) circle;
\draw [fill,blue] (E13) circle;
\draw [fill,blue] (aux) circle;
\draw [blue,draw opacity=0,semithick,-stealth] (E3) -- (arr32);
\draw [blue,draw opacity=0,semithick,-stealth] (E3) -- (arr23);
\draw [blue,draw opacity=0,semithick,-stealth] (E2) -- (arr12);
\draw [blue,draw opacity=0,semithick,-stealth] (E1) -- (arr13);
\draw [blue,draw opacity=0,semithick,-stealth] (E3) -- (arr31);
\draw [blue,draw opacity=0,semithick,-stealth] (E1) -- (arr21);
\draw [blue,draw opacity=0,semithick,-stealth] (arr3curvetail) -- (arr3curvehead);
\draw [blue,draw opacity=0,semithick,-stealth] (arr2curvetail) -- (arr2curvehead);
\end{tikzpicture}
&
\begin{tikzpicture}[scale=1,radius=0.05]
\coordinate (E1)  at (0,0);
\coordinate (E2)  at (3,0);
\coordinate (E3)  at (1.5,2.5981);
\coordinate (E12)  at (0.7*3,0);
\coordinate (E13)  at (0.7*1.5,0.7*2.5981);
\coordinate (Enew1) at (0.2*0.7*3 + 0.8*0.7*1.5,0.8*0.7*2.5981);
\coordinate (Enew2) at (0.8*0.7*3 + 0.2*0.7*1.5,0.2*0.7*2.5981);
\coordinate (aux) at (0.66*2.25,0.66*2.5981/2);
\coordinate (auxtail) at (0.66*2.25+1.5/10,0.66*2.5981/2-2.5981/10);
\coordinate (arr32) at (0.25*3+0.75*1.5,0.75*2.5981);
\coordinate (arr23) at (0.75*3+0.25*1.5,0.25*2.5981);
\coordinate (arr13) at (0.125*3,0.25*2.5981);
\coordinate (arr12) at (0.25*3,0);
\coordinate (arr21) at (0.82*3,0);
\coordinate (arr31) at (0.8*3/2,0.8*2.5981);
\coordinate (arr3curvetail) at (1.25+.2,1.9+1.5);
\coordinate (arr3curvehead) at (1.25,1.9);
\coordinate (arr2curvetail) at (2.3-1,0.12+0.5);
\coordinate (arr2curvehead) at (2.3  ,0.12);
\draw [semithick] (E1) -- (E2);
\draw [semithick] (E2) -- (E3);
\draw [semithick] (E3) -- (E1);
\draw [blue,semithick] (E12) -- (E13);
\draw [red, semithick] (E3) to [out=-120,in=120] (aux);
\draw [red, semithick] (aux) to [out=-60,in=180] (E2);
\node [below left]  at (E1) {\small $E_1$};
\node [below right] at (E2) {\small $E_2$};
\node [above]       at (E3) {\small $E_3$};
\node [below]       at (E12) {\small $E_{12}$};
\node [above left]  at (E13) {\small $E_{13}$};
\draw [fill,blue] (E1) circle;
\draw [fill,blue] (E2) circle;
\draw [fill,blue] (E3) circle;
\draw [fill,blue] (E12) circle;
\draw [fill,blue] (E13) circle;
\draw [fill,blue] (Enew1) circle;
\draw [fill,blue] (Enew2) circle;
\draw [blue,draw opacity=0,semithick,-stealth] (E3) -- (arr32);
\draw [blue,draw opacity=0,semithick,-stealth] (E3) -- (arr23);
\draw [blue,draw opacity=0,semithick,-stealth] (E2) -- (arr12);
\draw [blue,draw opacity=0,semithick,-stealth] (E1) -- (arr13);
\draw [blue,draw opacity=0,semithick,-stealth] (E3) -- (arr31);
\draw [blue,draw opacity=0,semithick,-stealth] (E1) -- (arr21);
\draw [blue,draw opacity=0,semithick,-stealth] (auxtail) -- (aux);
\draw [blue,draw opacity=0,semithick,-stealth] (arr3curvetail) -- (arr3curvehead);
\draw [blue,draw opacity=0,semithick,-stealth] (arr2curvetail) -- (arr2curvehead);
\end{tikzpicture}
\\
\end{tabular}
\end{center}
\caption{The relative position of the three points $P_1$, $P_2$, $P_3$ and the three possible behaviours on the facet $\mathcal{F}_{123}$ when $\sgn c = (+,-,-,0)$ and $a_4 \leq a_2 < a_1 \leq a_3$.}
\label{fig:triangle_123}
\end{figure}
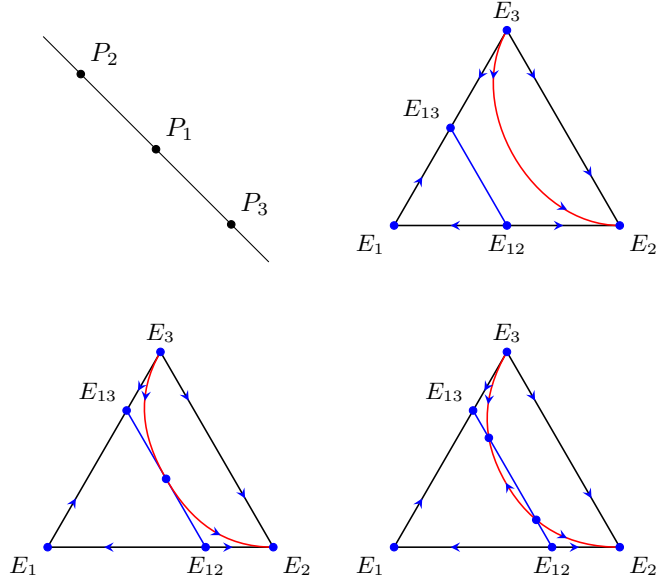
\end{proof}

Note that the function $(0,\infty) \ni L \mapsto \frac{(L+1)^{L+1}}{L^L} \in (1,\infty)$ is monotonically increasing. Since for $\sgn c = (+,-,-,0)$ (respectively, for $\sgn c = (-,+,0,-)$), we have $L= \frac{c_2}{c_3} = \frac{\Delta(341)}{\Delta(214)} = \frac{a_1-a_2}{a_3 - a_1}$ (respectively, $L= \frac{c_1}{c_4} = \frac{\Delta(243)}{\Delta(123)} = \frac{a_1-a_2}{a_2 - a_4}$), the condition \eqref{eq:c3_or_c4_zero_permanence_condition} holds whenever $P_1$ (respectively, $P_2$) is close enough to $P_3$ (respectively, to $P_4$). Further, the condition \eqref{eq:c3_or_c4_zero_permanence_condition} holds whenever the slope of $P_1P_2$ is at most $-1$.

The next lemma covers the cases, where $\sgn c$ is one of $(+,0,-,0)$ and $(0,+,0,-)$, i.e., the situations in the right panels in the $4$th and $5$th rows in Figure~\ref{fig:9_cs}. It is the case (C1c) in Theorem \ref{thm:oppositesign}. Once this lemma is proven, it also concludes the proof of Theorem \ref{thm:oppositesign}.

\begin{lemma} \label{lem:P1=P3}
Assume $\sgn J = \begin{pmatrix}
+ & - \\ + & -
\end{pmatrix}$, $\det J > 0$, and that $a_4 \leq a_2 < a_1 \leq a_3$ and $\sgn c$ is either $(+,0,-,0)$ or $(0,+,0,-)$. Then the ODE~\eqref{eq:ode_exp} is permanent if and only if $a_1-a_2 + b_1 - b_2 \leq 0$ and $\tr J < 0$.
\end{lemma}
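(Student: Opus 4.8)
The plan is to analyse the replicator flow on $\overline S$, exactly as for the earlier cases. The sign pattern $\sgn c=(+,0,-,0)$ means $c_2=c_4=0$, which together with $\det J\neq0$ forces $P_1=P_3$, i.e.\ $a_3=a_1$ and $b_3=b_1$. Hence the edge $\mathcal F_{13}$ consists entirely of equilibria, $\overline S=\{x_1=x_3\}\cap\Delta_4$ is the triangle with corners $E_2$, $E_4$ and $E_m=\tfrac12E_1+\tfrac12E_3$ (the only point of $\mathcal F_{13}$ on $\overline S$), and the interior equilibrium $\tfrac14\mathbf1$ lies in $S$. I would first record the eigenvalues of the flow on $S$ at the three corners. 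At $E_2$ and $E_4$ they are the external eigenvalues; at $E_m$, since the zero-eigenvalue direction along $\mathcal F_{13}$ is transverse to $S$, the two eigenvalues tangent to $S$ are the invasion rates towards $E_2$ and $E_4$, namely
\begin{align*}
-\tfrac12\big((a_1-a_2)+(b_1-b_2)\big)\quad\text{and}\quad -\tfrac12\big((a_1-a_4)+(b_3-b_4)\big).
\end{align*}
The first is $\ge0$ precisely under the hypothesis $a_1-a_2+b_1-b_2\le0$.

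By Lemma~\ref{lem:corners_doesnt_attract} the corners $E_2,E_4$ never attract from $S$, so the only possible boundary attractors are $E_m$, an interior equilibrium on one of the three edges, or a heteroclinic cycle around $\partial S$. I would dispatch the edge equilibria by computing their transverse eigenvalue, which in each case turns out to be a nonzero multiple of $\pm\det J$. On $\mathcal F_{24}$ an equilibrium $E_{24}$ exists only for $a_4<a_2$, $b_2<b_4$, with eigenvalue $\det J/\big((b_4-b_2)+(a_2-a_4)\big)>0$; on $\mathcal F_{134}\cap S$ an equilibrium exists only for $(a_1-a_4)+(b_3-b_4)<0$, with $E_2$-eigenvalue $-\det J/\big((a_1-a_4)+3(b_3-b_4)\big)>0$. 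Both are unsaturated, hence do not attract from $S$ by Lemma~\ref{lem:saturated}. The decisive one is on $E_2E_m\subset\mathcal F_{123}$: it exists \emph{iff} $a_1-a_2+b_1-b_2>0$, and its $E_4$-eigenvalue equals $-\det J/\big(3(a_1-a_2)+(b_1-b_2)\big)<0$. So when $a_1-a_2+b_1-b_2>0$ this is a saddle of the flow on $\overline S$ whose stable manifold enters $S$; the incoming separatrix is an orbit in $S$ converging to $\partial S$, and the system is not permanent. This yields the necessity of $a_1-a_2+b_1-b_2\le0$, and also shows $E_m$ cannot attract from $S$ under this inequality (its unique stable direction then lies along $\partial S$).

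Assuming now $a_1-a_2+b_1-b_2\le0$, the flow on $\partial S$ runs $E_m\to E_2$ and $E_2E_m$ carries no equilibrium, so a heteroclinic cycle can only be $E_m\to E_2\to E_4\to E_m$; it is present exactly when $a_4<a_2$, $b_4<b_2$ and $(a_1-a_4)+(b_3-b_4)>0$ (otherwise $\mathcal F_{24}$ or $\mathcal F_{134}\cap S$ carries the non-attracting equilibrium just discussed and no cycle forms). For this cycle I would invoke the criterion of \cite{hofbauer:1981} already used in Section~\ref{sec:S2R}: it repels iff the product of the three outgoing eigenvalues exceeds that of the incoming ones. Reading the six rates off the corner eigenvalues above, the governing quantity is
\begin{align*}
&(a_1-a_2)(b_2-b_4)\cdot\tfrac12\big((a_1-a_4)+(b_3-b_4)\big)\\
&\qquad-(a_2-a_4)(b_4-b_1)\cdot\tfrac12\big((a_2-a_1)+(b_2-b_1)\big)=\tfrac12\,\tr J\,\det J,
\end{align*}
the left-hand side being (incoming product) minus (outgoing product). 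Since $\det J>0$, the cycle repels iff $\tr J<0$; this factorisation is the heart of the proof and the reason the trace enters.

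It remains to assemble the two implications. For sufficiency, under $a_1-a_2+b_1-b_2\le0$ and $\tr J<0$, no corner or edge equilibrium attracts from $S$ and any heteroclinic cycle repels, so Theorem~\ref{thm:butler} gives permanence. For necessity, suppose $a_1-a_2+b_1-b_2\le0$ and $\tr J>0$; then $b_4<b_2$ and $(a_1-a_4)+(b_3-b_4)>0$ hold automatically, and when additionally $a_4<a_2$ the cycle is present and, by the identity, attracting, so the system is not permanent. The case $\sgn c=(0,+,0,-)$ follows from the rotation $\mathbf r_2$. The main obstacle I expect lies in the marginal configurations: $\tr J=0$, where the two eigenvalue products coincide and the cycle's stability is decided only by higher-order (return-map) terms, and the degenerate corners $a_4=a_2$ or $b_4=b_2$, where an eigenvalue along the cycle vanishes and the hyperbolic criterion of \cite{hofbauer:1981} no longer applies; these require a separate center-manifold or Poincar\'e-map argument to confirm non-permanence.
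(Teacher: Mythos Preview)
Your approach is essentially the paper's: restrict the replicator flow to the triangle $\overline S=\{x_1=x_3\}$ with corners $E_m,E_2,E_4$, dispose of boundary equilibria, and decide the heteroclinic case via the Hofbauer eigenvalue-product criterion. Your explicit transverse-eigenvalue computations at $E_{24}$, at the $E_mE_4$-equilibrium, and at the $E_mE_2$-equilibrium are correct (all reduce to $\pm\tfrac12\det J$ over the appropriate denominator), and your identity for the cycle quantity, $\tfrac12\tr J\,\det J$, matches the paper's $L_\infty=-\tfrac{c_1}{2}\tr J$ (here $c_1=\det J$). One small difference: for $a_1-a_2+b_1-b_2>0$ the paper simply observes that \emph{both} eigenvalues at $E_m$ are negative (a short computation using $\det J>0$), so $E_m$ itself is asymptotically stable; your route via the saddle on $E_mE_2$ and its stable manifold works too but is slightly less direct.

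The one genuine gap you flag is the case $\tr J=0$. Here the return-map argument you suggest is not needed: when $a_1-a_2+b_1-b_2\le0$ and $\tr J=0$, the automatic inequalities you derived ($b_4<b_2$, $(a_1-a_4)+(b_3-b_4)>0$) still hold, so the rock--paper--scissors cycle $E_m\to E_2\to E_4\to E_m$ is present and $L_\infty=0$. The paper resolves this by quoting the classical result that a zero-sum rock--paper--scissors replicator (equivalently, $L_\infty=0$) has a global center: all interior orbits on $\overline S$ are periodic about $\tfrac14\mathbf1$, the boundary cycle is neither attracting nor repelling, and the system is not permanent (no compact forward-invariant global attractor in $S$). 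This is Zeeman's theorem (\cite[Theorem~6]{zeeman:1980}; see also \cite[Theorem~7.7.2]{HS98}), and it closes the case without any Poincar\'e-map analysis. The degenerate-corner situations you worry about ($a_4=a_2$ or $b_4=b_2$) under $\tr J<0$ are in fact absorbed by the other branch of the argument or handled by Lemma~\ref{lem:corners_doesnt_attract}; under $\tr J\ge0$ they do not occur (your own inequalities give $b_4<b_2$, and $a_4=a_2$ makes the outgoing product zero while the incoming product is positive, so the cycle is still attracting in the sense of \cite{hofbauer:1981}).
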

\begin{proof}
We first prove the case $\sgn c = (+,0,-,0)$. Note that then $c_1 = -c_3$, $a_1=a_3$, and $b_1=b_3$. The surface $\overline S$ is thus the triangle $\{x \in \Delta_4~|~ x_1=x_3\}$. The dynamics on $\overline S$ is given by the replicator dynamics for the strategies $\frac12 E_1+\frac12 E_3, E_2, E_4$ with matrix
\begin{align}\label{eq:m24_matrix}
\begin{pmatrix}
0                           & a_2-a_1 & b_4- b_1 \\
\frac{a_2-a_1 + b_2-b_1}{2} & 0       & b_4- b_2 \\
\frac{a_4-a_3 + b_4-b_3}{2} & a_2-a_4 & 0        \\
\end{pmatrix}.
\end{align}
Let $E_m = \frac12 E_1+\frac12 E_3$. See Figure~\ref{fig:P1=P3}.

We now show that $a_1 - a_2 + b_1 - b_2 > 0$ implies that $E_m$ is asymptotically stable, contradicting permanence. Indeed, a short calculation shows that $a_1 - a_2 + b_1 - b_2 > 0$, $b_1-b_2<0$, $b_3-b_4<0$ and $\det J > 0$ imply that $a_3-a_4 + b_3-b_4 > 0$, and hence, both of the eigenvalues at $E_m$ in $\overline S$ are negative.

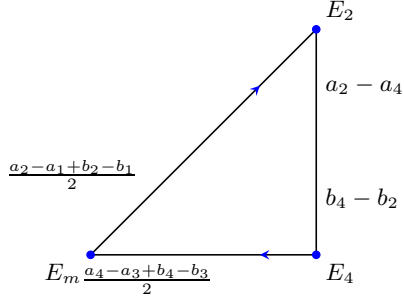
\begin{figure}
\begin{center}
\begin{tikzpicture}[scale=1,radius=0.05]
\coordinate (Em)  at (0,0);
\coordinate (E2)  at (3,3);
\coordinate (E4)  at (3,0);
\coordinate (arr2m) at (2.25,2.25);
\coordinate (arr4m) at (2.25,0);
\coordinate (eig24) at (3,2.25);
\coordinate (eig42) at (3,0.75);
\coordinate (eigm2) at (0.75,0.75);
\coordinate (eigm4) at (0.75,0);
\draw [semithick] (E1) -- (E4);
\draw [semithick] (E4) -- (E2);
\draw [semithick] (E2) -- (E1);
\node [below left]  at (Em) {\small $E_m$};
\node [above right] at (E2) {\small $E_2$};
\node [below right] at (E4) {\small $E_4$};
\node [above left]  at (eigm2) {\small $\frac{a_2-a_1+b_2-b_1}{2}$};
\node [below]       at (eigm4) {\small $\frac{a_4-a_3+b_4-b_3}{2}$};
\node [right]       at (eig42) {\small $b_4-b_2$};
\node [right]       at (eig24) {\small $a_2-a_4$};
\draw [fill,blue] (Em) circle;
\draw [fill,blue] (E2) circle;
\draw [fill,blue] (E4) circle;
\draw [blue,draw opacity=0,semithick,-stealth] (Em) -- (arr2m);
\draw [blue,draw opacity=0,semithick,-stealth] (E4) -- (arr4m);
\end{tikzpicture}
\end{center}
\caption{The flow in the triangle $x_1=x_3$ in case $P_1 = P_3$.}
\label{fig:P1=P3}
\end{figure}

From now on, we assume that $a_1 - a_2 + b_1 - b_2 \leq 0$. Thus, there is no equilibrium on the edge $\mathcal{F}_{2m}$ and the flow goes from $E_m$ to $E_2$. We claim that
\begin{align}\label{eq:claim_Em_does_not_attract}
E_m \text{ does not attract an orbit from }S.
\end{align}
Suppose on the contrary that $E_m$ attracts an orbit from $S$. Then both eigenvalues must be less than or equal to zero. Thus, $a_2-a_1 + b_2-b_1 = 0$ and $a_4-a_3 + b_4-b_3 \leq 0$. As a short calculation shows, $\det J \neq 0$ implies that at least one eigenvalue at $E_m$ is nonzero. Hence, $a_4-a_3 + b_4-b_3 < 0$. The center manifold at $E_m$ is thus $1$-dimensional (and is contained in $\mathcal{F}_{m2}$). By the reduction principle (see~\cite[Theorem~5.2]{kuznetsov:2004}), $E_m$ is topologically a saddle and cannot attract any orbit from $S$. This contradiction proves the claim \eqref{eq:claim_Em_does_not_attract}.

If there exists an equilibrium $E_{24}$ on the edge $\mathcal{F}_{24}$ (i.e., $a_4 < a_2$ and $b_2<b_4$), it cannot attract any orbit from the interior, see Lemma \ref{lem:edge_eq_doesnt_attract}. (Note that $P_2=P_4$ would contradict $c_1\neq0$.) Note also that $b_2 < b_4$ implies $\tr J < 0$. Indeed, 
\begin{align*}
\tr J = a_1 - a_2 + b_3 - b_4 < a_1 - a_2 + b_1 - b_2 \leq 0.
\end{align*}

If there exists an equilibrium $E_{m4}$ on the edge $\mathcal{F}_{m4}$ (i.e., $a_4-a_3 + b_4-b_3 > 0$), the eigenvalue $\Gamma_{m4}^2$ in $\overline S$, by \eqref{eq:Gammaijk} applied to \eqref{eq:m24_matrix}, has the same sign as $-c_3$, and thus, is positive. Therefore, $E_{m4}$ (if it exists at all) cannot attract any orbit from the interior. Note also that $a_4-a_3 + b_4-b_3 > 0$ implies $\tr J < 0$. Indeed,
\begin{align*}
\tr J = a_1 - a_2 + b_3 - b_4 < a_1 - a_2 + a_4 - a_3 = a_4 - a_2 < 0. 
\end{align*}

None of $E_m$, $E_2$, $E_4$ can attract an orbit from $S$ (by the claim \eqref{eq:claim_Em_does_not_attract} and Lemma \ref{lem:corners_doesnt_attract}). By Theorem \ref{thm:butler}, permanence follows if at least one of $E_{24}$ and $E_{m4}$ exists.

It remains to characterize permanence when none of $E_{24}$ and $E_{m4}$ exists. This leads to a heteroclinic cycle along $E_m$, $E_2$, $E_4$, $E_m$, as in the rock-paper-scissors game. Let us define $L_\infty$ by
\begin{align*}
L_\infty = \frac{a_2-a_1 + b_2-b_1}{2}(a_2-a_4)(b_4- b_1) + \frac{a_4-a_3 + b_4-b_3}{2}(a_2-a_1)(b_4- b_2).
\end{align*}
(Note that $L_\infty$ is the determinant of \eqref{eq:m24_matrix}.) With this, if $L_\infty > 0$ then $\partial S$ is repelling (i.e., the ODE~\eqref{eq:ode_exp} is permanent), and if $L_\infty < 0$ then $\partial S$ is attracting  (i.e., the ODE~\eqref{eq:ode_exp} is not permanent), see \cite[Theorem 3]{hofbauer:1981}. A short calculation shows that $L_\infty = -\frac{c_1}{2} \tr J$. If $\tr J = 0$ then the system is not permanent, because we have a global center (see \cite[Theorem 6]{zeeman:1980}, \cite[Theorem 7]{boros:hofbauer:mueller:regensburger:2017b}, \cite[Theorem 7.7.2]{HS98}, \cite[Exercise 16.5.5(e)]{HS88}).

To prove the statement in the case $\sgn c = (0,+,0,-)$ one can argue similarly. However, it is more elegant to apply the rotation $\mathbf{r}_2$ to the case $\sgn c = (+,0,-,0)$ (the rotation $\mathbf{r}_2$ leaves both $a_1-a_2+b_1-b_2$ and $\tr J$ invariant).
\end{proof}

We remark that whenever the ODE~\eqref{eq:ode_exp} with the assumptions of Lemma \ref{lem:P1=P3} is permanent, the origin is globally asymptotically stable. This follows from the classification of the replicator dynamics on the triangle $\Delta_3$, see \cite{zeeman:1980,HS88,HS98}.

\section{Examples}

We illustrate some of our results via two examples. The first one is actually a special case of the second one.

\subsection{Selkov's glycolytic oscillation}

Selkov \cite{Selkov68} considered the planar S-system
\begin{align}\label{eq:ode_selkov}
\begin{split}
\dot x &= 1 - xy^{\gamma},  \\
\dot y &= k ( xy^{\gamma} - y)
\end{split}
\end{align}
with $k>0$ and $\gamma \in \mathbb{R}$ as a model for glycolytic oscillations. One can rewrite it as the ODE~\eqref{eq:ode_exp} with
\begin{align*}
\begin{aligned}
a_1 &= -1, &b_1 &= 0,           \\
a_2 &=  0, &b_2 &= k\gamma,     \\
a_3 &=  1, &b_3 &= k(\gamma-1), \\
a_4 &=  0, &b_4 &= 0.
\end{aligned}
\end{align*}
Then the Jacobian of the ODE~\eqref{eq:ode_exp} at the origin is given by
\begin{align*}
J = \begin{pmatrix}
-1 & -k\gamma \\
1& k(\gamma-1)
\end{pmatrix},
\end{align*}
while $c = k(\gamma,1-\gamma,\gamma,-1-\gamma)$. Thus, for $\gamma < 1$ (respectively, for $\gamma = 1$) permanence follows from case (A) (respectively, from case (B2)) in Theorem \ref{thm:samesign}. For $\gamma > 1$, the system is not permanent, because based on $\sgn J$ it falls under case (C3) in Theorem \ref{thm:oppositesign}, but $\sgn (c_1,c_2) = (+,-) \notin \{(+,+), (0,+), (+,0)\}$. In the ODE~\eqref{eq:A}, the corner $E_4$ is asymptotically stable and in the ODE~\eqref{eq:ode_selkov}, some orbits go to infinity along the $x$-axis. For $\gamma \leq 1+\frac{1}{k}$, the origin is asymptotically stable, it undergoes a supercritical Andronov-Hopf bifurcation at $\gamma = 1+\frac{1}{k}$, see \cite{Selkov68,boros:hofbauer:mueller:2017,brechmann:rendall:2018}. It is shown that the ODE~\eqref{eq:ode_selkov} can have at most one limit cycle and it is an open question, whether this limit cycle disappears in a heteroclinic bifurcation at some value $\widehat{\gamma}(k)$ and for $\gamma > \widehat{\gamma}(k)$ all orbits (except the unique positive equilibrium) escape to infinity, see \cite{brechmann:rendall:2018}.

\subsection{The Lotka reactions with generalized mass-action kinetics}

Dancs\'o et al.~\cite{dancso:farkas:farkas:szabo:1991} studied the Lotka reactions with generalized mass-action kinetics. Here we consider the special case
\begin{align}\label{eq:ode_lotka}
\begin{split}
\dot x &= x^{\alpha}  - x y^{\beta}, \\
\dot y &= k(x y^{\beta} - y)
\end{split}
\end{align}
with $k > 0$ and $\alpha$, $\beta \in \mathbb{R}$. In \cite{boros:hofbauer:mueller:2017}, we showed that the unique positive equilibrium of the ODE~\eqref{eq:ode_lotka} is globally asymptotically stable for all $k>0$ if and only if
\begin{align*}
\alpha \leq 1, \beta \leq 1, (\alpha,\beta) \neq (1,1), \text{ and } \alpha\beta > \alpha - 1,
\end{align*}
while it is globally asymptotically stable for $k = 1$ if and only if either
\begin{align*}
&\alpha \leq 1, \beta \leq 1, (\alpha,\beta) \neq (1,1), \text{ and } \alpha\beta > \alpha - 1 \text{ or}\\
&1 < \alpha \leq \frac{3}{2} \text{ and } \alpha-1 \leq \beta \leq 2-\alpha.
\end{align*}

Now we characterize permanence for fixed $k>0$ and $\alpha$, $\beta \in \mathbb{R}$ (in particular for $k = 1$ and $\alpha$, $\beta \in \mathbb{R}$). Further, based on this, we characterize those exponents $\alpha$, $\beta \in \mathbb{R}$ for which permanence holds for all $k>0$.

One can rewrite the ODE~\eqref{eq:ode_lotka} as the ODE~\eqref{eq:ode_exp} with
\begin{align} \label{eq:ab_in_alphabeta}
\begin{aligned}
a_1 &= \alpha-1, &b_1 &= 0,          \\
a_2 &=        0, &b_2 &= k\beta,     \\
a_3 &=        1, &b_3 &= k(\beta-1), \\
a_4 &=        0, &b_4 &= 0.
\end{aligned}
\end{align}
Then the Jacobian of the ODE~\eqref{eq:ode_exp} at the origin is given by
\begin{align*}
J = \begin{pmatrix}
\alpha-1 & -k\beta \\
1& k(\beta-1)
\end{pmatrix},
\end{align*}
while $c = k(\beta,(\alpha-1)(\beta-1),-(\alpha-1)\beta,\alpha-1-\beta)$.

\begin{proposition}
The following three statements hold.
\begin{enumerate}[(i)]
\item\label{it:lotka1} The ODE~\eqref{eq:ode_exp} with \eqref{eq:ab_in_alphabeta} is permanent if and only if either
\begin{align*}
&\alpha \leq 1, \beta \leq 1, (\alpha,\beta) \neq (1,1), \text{ and } \alpha\beta > \alpha - 1,\\
&1 < \alpha < 2 \text{ and } \alpha-1 < \beta < 1, \text{ or} \\
&1 < \alpha < 2, \beta = \alpha-1, \text{ and } k > \beta(1-\beta)^{\frac{1-\beta}{\beta}}.
\end{align*}
\item\label{it:lotka2} The ODE~\eqref{eq:ode_exp} with \eqref{eq:ab_in_alphabeta} is permanent for $k=1$ if and only if either
\begin{align*}
&\alpha \leq 1, \beta \leq 1, (\alpha,\beta) \neq (1,1), \text{ and } \alpha\beta > \alpha - 1 \text{ or}\\
&1 < \alpha < 2 \text{ and } \alpha-1 \leq \beta < 1.
\end{align*}
\item\label{it:lotka3} The ODE~\eqref{eq:ode_exp} with \eqref{eq:ab_in_alphabeta} is permanent for all $k>0$ if and only if either
\begin{align*}
&\alpha \leq 1, \beta \leq 1, (\alpha,\beta) \neq (1,1), \text{ and } \alpha\beta > \alpha - 1 \text{ or}\\
&1 < \alpha < 2 \text{ and } \alpha-1 < \beta < 1.
\end{align*}
\end{enumerate}
\end{proposition}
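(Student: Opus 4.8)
The plan is to instantiate the general characterization (Theorems \ref{thm:samesign} and \ref{thm:oppositesign}) at the concrete parameters \eqref{eq:ab_in_alphabeta} and read off, case by case, for which $(\alpha,\beta,k)$ each applies. The preliminary bookkeeping records from \eqref{eq:ab_in_alphabeta} that
\[
\det J = k(\alpha\beta-\alpha+1), \qquad \tr J = (\alpha-1)+k(\beta-1),
\]
so that $\det J>0 \iff \alpha\beta>\alpha-1$ (which is why this inequality recurs in the statement), and that the entries of $\sgn J$ as well as the signs of the four coordinates of $c=k\bigl(\beta,(\alpha-1)(\beta-1),-(\alpha-1)\beta,\alpha-1-\beta\bigr)$ depend on $(\alpha,\beta)$ but \emph{not} on $k>0$. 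This $k$-independence of the sign data is the structural fact that will make (iii) follow cleanly from (i). I would then split on the sign of $J_{11}J_{22}=k(\alpha-1)(\beta-1)$.

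In the same-sign regime $(\alpha-1)(\beta-1)\geq0$, Theorem \ref{thm:samesign} applies and I simply match its conditions. One checks that (A) needs $\alpha<1,\beta<1$; that (B1) needs $\alpha=1,\beta<1$ (its $a$-ordering $\min(a_3,a_4)\leq a_2=a_1\leq\max(a_3,a_4)$ reduces to $0\leq0\leq1$, automatic); and that (B2) needs $\alpha<1,\beta=1$ (its $b$-ordering again automatic). Together with $\det J>0$ these assemble exactly into $\alpha\leq1,\beta\leq1,(\alpha,\beta)\neq(1,1),\alpha\beta>\alpha-1$, while the branch $\alpha\geq1,\beta\geq1$ forces $J_{11}\geq0$ and is never permanent. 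This yields the first line of (i), with global asymptotic stability.

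In the opposite-sign regime $(\alpha-1)(\beta-1)<0$ I first note that the $b$-ordering in the hypothesis of Theorem \ref{thm:oppositesign} fails (for $\alpha>1>\beta$ one has $\min(b_1,b_2)=0>k(\beta-1)=\min(b_3,b_4)$; for $\alpha<1<\beta$ the analogous $a$-ordering fails), so Theorem \ref{thm:oppositesign} applies directly and Theorem \ref{thm:heteroclinic} is never needed. For $\alpha<1,\beta>1$ one gets $\sgn J=\begin{pmatrix}-&-\\+&+\end{pmatrix}$ and $\sgn c=(+,-,+,-)$, matching none of (C3a)--(C3c), hence not permanent. For $\alpha>1,\beta<1$, permanence forces $\det J>0$ and therefore $0<\beta$ (else $\det J\leq0$), giving $\sgn J=\begin{pmatrix}+&-\\+&-\end{pmatrix}$, i.e.\ case (C1). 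The ordering $a_4\leq a_2<a_1\leq a_3$ reads $0\leq0<\alpha-1\leq1$ (forcing $\alpha\leq2$) and $\sgn c=(+,-,-,\sgn(\alpha-1-\beta))$, so the necessary condition $c_4\leq0$ of Lemma \ref{lem:necess_for_perm} forces $\beta\geq\alpha-1$; here $\det J/k>(\alpha-1)^2\geq0$ is automatic. When $\beta>\alpha-1$ we are in (C1a) (which also forces $\alpha<2$) and the system is permanent, giving region 2; when $\beta=\alpha-1$ we are in (C1b).

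The only genuine computation is the boundary $\beta=\alpha-1$. Substituting into (C1b) gives $L=c_2/c_3=(1-\beta)/\beta$ and $-\tfrac{a_1-a_2}{b_1-b_2}=\tfrac1k$, and the key simplification, using $L+1=1/\beta$,
\[
\frac{(L+1)^{L+1}}{L^L}=\frac{1}{\beta\,(1-\beta)^{(1-\beta)/\beta}},
\]
turns the (C1b) inequality into $k>\beta(1-\beta)^{(1-\beta)/\beta}$, i.e.\ region 3, completing (i). For (ii) I set $k=1$ and use that $\phi(L)=(L+1)^{L+1}/L^L$ satisfies $(\log\phi)'(L)=\log\tfrac{L+1}{L}>0$ with $\phi(0^+)=1$, so $\phi(L)>1$ for all $L>0$; hence the (C1b) inequality holds automatically at $k=1$ and the boundary line joins region 2, merging into $1<\alpha<2,\ \alpha-1\leq\beta<1$. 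For (iii), the $k$-independence of the sign data means regions 1 and 2 are permanent for every $k$, whereas region 3 requires $k$ large; thus ``permanent for all $k$'' keeps only regions 1 and 2, i.e.\ the strict inequality $\alpha-1<\beta$. The main obstacle is not conceptual but the careful sign- and ordering-bookkeeping across the sub-cases, together with the single algebraic identity displayed above.
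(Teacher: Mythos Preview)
Your proposal is correct and follows essentially the same route as the paper: instantiate the parameters \eqref{eq:ab_in_alphabeta}, split on the sign of $J_{11}J_{22}$, match the cases of Theorems \ref{thm:samesign} and \ref{thm:oppositesign}, and carry out the single algebraic simplification in (C1b) at $\beta=\alpha-1$. You add some detail the paper leaves implicit---explicitly verifying that one of the two ordering hypotheses of Theorem \ref{thm:oppositesign} fails (so Theorem \ref{thm:heteroclinic} is irrelevant here), and spelling out why (ii) and (iii) follow from (i) via $\phi(L)>1$ and the $k$-independence of the sign data---whereas the paper simply asserts these.
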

\begin{proof}
Clearly, \eqref{it:lotka2} and \eqref{it:lotka3} follow from \eqref{it:lotka1}. Thus, it remains to prove \eqref{it:lotka1}.

If $\alpha \geq 1$ and $\beta \geq 1$ then there is no negative entry on the diagonal of $J$, so the system is not permanent, see Theorem \ref{thm:samesign}.

If $\alpha \leq 1$, $\beta \leq 1$, and $(\alpha,\beta)\neq(1,1)$ then there is no positive entry on the diagonal of $J$ (and at least one diagonal entry is negative), and the system falls under one of the cases (A), (B1), (B2) in Theorem \ref{thm:samesign} \eqref{it:samesign_charact}. Hence, the system is permanent.

For all other pairs $(\alpha,\beta)$, the assumptions of Theorem \ref{thm:oppositesign} hold. If $\alpha < 1$ and $\beta > 1$ then the system falls under case (C3) and is not permanent, because $c_2<0$.

If $\alpha > 1$ and $\beta < 1$ then the system falls under case (C1) in Theorem \ref{thm:oppositesign}. To have $\det J > 0$, we need $\beta > 0$. Therefore, $\sgn(c_1,c_2,c_3)= (+,-,-)$. Further, $c_4 \leq 0$ if and only if $\beta \geq \alpha-1$. When $\beta > \alpha-1$, the system is permanent as it falls under case (C1a). When $\beta = \alpha-1$, the system is permanent if and only if $k > \beta(1-\beta)^{\frac{1-\beta}{\beta}}$, see case (C1b).
\end{proof}

We now describe the behaviour in the strip $1 < \alpha < 2 \text{ and } \alpha-1 < \beta$ under $k = 1$. At the line $\beta = 2-\alpha$, a supercritical Andronov-Hopf bifurcation occurs, a stable limit cycle appears for $\beta$ slightly larger than $2-\alpha$, see \cite{boros:hofbauer:mueller:2017}. Because of permanence, an asymptotically stable limit cycle must exist for all $2-\alpha<\beta < 1$. At $\beta = 1$, $\sgn J = \begin{pmatrix} + & - \\ + & 0 \end{pmatrix}$, the divergence is positive, therefore, no closed orbit exists. Since $\sgn c = (+,0,-,-)$, $\partial S$ consists of the two edges $\mathcal{F}_{32}$, $\mathcal{F}_{24}$ and the curve $\mathcal{C}^1_{43}$, it is a heteroclinic cycle. The equilibrium $(1,1)$ is a global repeller and the heteroclinic cycle is the global attractor. Therefore, the stable limit cycle for $\beta < 1$ merges with the heteroclinic cycle at $\beta = 1$. For $\beta > 1$, Theorem \ref{thm:heteroclinic} applies, $L_\infty<0$ (note that one of the outgoing eigenvalues is zero: $b_1 = b_4$, so at $E_4$ in the direction $E_1$), and therefore $\partial S$ is strongly attracting. The equilibrium $(1,1)$ is a global repeller, since the divergence is positive.

\section{Three limit cycles}

In the papers \cite{dancso:farkas:farkas:szabo:1991} and \cite{boros:hofbauer:mueller:regensburger:2017,boros:hofbauer:mueller:regensburger:2017b}, examples of planar S-systems with one and two limit cycles were constructed around the equilibrium, respectively. Based on the findings of the present paper, we can construct one more limit cycle, one that is created near infinity.

Consider the ODE~\eqref{eq:ode_exp} with
\begin{align} \label{eq:ab_limit_cycle}
\begin{aligned}
a_1 &=   0, &b_1 &= 0,          \\
a_2 &=  -8, &b_2 &= 35,     \\
a_3 &=  10, &b_3 &= 20, \\
a_4 &= -20, &b_4 &= 28.
\end{aligned}
\end{align}
Then $\tr J = 0$ and $\det J > 0$. As in \cite[Section 4.3]{boros:hofbauer:mueller:regensburger:2017b}, the first focal value, $L_1$, is given by
\begin{align*}
L_1 = - \frac{\pi}{8} \frac{(b_3 - b_4) \left[ D b_2 - (a_3 - a_4) b_3 b_4 \right]}{b_2\sqrt{\det J}},
\end{align*}
where
\begin{align*}
D = a_3 a_4 + a_3 b_4 - a_4 b_3.
\end{align*}
Then, with the substitutions \eqref{eq:ab_limit_cycle}, we have $D = 480$, $L_1 = 0$, and the second focal value, $L_2$, is positive, according to the formula derived in \cite[Section 4.3]{boros:hofbauer:mueller:regensburger:2017b}. Further, from the definition in case (A) in Theorem \ref{thm:heteroclinic}, $L_\infty = 0$ (i.e., we do not know the behaviour near infinity).

Next, we perturb $b_2$ to a slightly smaller value $35 - \eps$ (with $\eps > 0$ small).
As a result, 
\begin{align}
\label{p-L1}
\begin{split}
\tr J &= 0,\\
L_1 &= -\frac{\pi}{(35-\eps)\sqrt{986}} D\eps < 0,\\
L_2 &> 0
\end{split}
\end{align}
and
\begin{align}\label{p-Linfty}
L_{\infty} = -a_2 D \eps > 0.
\end{align}
From \eqref{p-L1} we get a Bautin bifurcation (see~\cite[Section~8.3]{kuznetsov:2004}) near the origin and an unstable limit cycle $\Gamma_1$ is created. The origin is now asymptotically stable.
From \eqref{p-Linfty}, the system is permanent, see Theorem \ref{thm:heteroclinic}. Thus for large initial points, the solutions spiral inwards. By the Poincar\'e--Bendixson theorem, its $\omega$--limit set is nonempty and either contains an equilibrium (which is not possible, since the only equilibrium is the origin and it is surrounded by $\Gamma_1$), or is a periodic orbit, call it $\Gamma_{\infty}$. This $\Gamma_{\infty}$ must surround an equilibrium, i.e., the origin, and  must be attracting at least from the outside, so it is different from $\Gamma_1$.
  
Finally, after fixing $\eps >0$ with the above behaviour, we perturb for example $a_2$ to $a_2 - \mu$. Then $\mu = \tr J$ and an Andronov--Hopf bifurcation occurs at $\mu = 0$. It is supercritical, since $L_1 < 0$. So for $\mu > 0$, the origin is unstable, and a small stable limit cycle $\Gamma_0$ is created.

Thus this system has (at least) two stable limit cycles (if $\Gamma_{\infty}$ is repelling towards the interior then there will be some other limit cycle attracting from both sides), and at least one unstable one. 

\appendix
\section{Replicator dynamics}\label{sec:app_replicator}

In this section, we collect some general facts about the replicator dynamics, i.e., about the ODE
\begin{align} \label{eq:ode_A_n}
\dot x_i = x_i \left[ (Ax)_i - x^\mathsf{T}A x\right] \text{ for } i = 1, \ldots, n
\end{align}
with (the $n-1$-dimensional) state space $\Delta_n = \{x \in \mathbb{R}^n_{\geq0}~|~x_1 + \cdots + x_n = 1\}$. We assume (w.l.o.g.) throughout that $a_{ii} = 0$ for all $i = 1, \ldots, n$.

For $k \in \{1, \ldots, n\}$, we denote by $E_k$ the $k$th corner of $\Delta_n$, i.e., the vector whose $k$th coordinate is $1$ and all the others are $0$. All the corners are equilibria. For $l \neq k$, the eigenvalue at $E_k$ in the direction $E_l$ is $a_{lk}$.

For $i,j \in \{1,\ldots,n\}$ with $i \neq j$, we denote by $\mathcal{F}_{ij}$ the edge $\{x \in \Delta_n ~|~ x_i + x_j = 1\}$. There exists a unique edge equilibrium $E_{ij}$ in $\mathcal{F}_{ij}$ (strictly between $E_i$ and $E_j$) if and only if $\sgn a_{ij} = \sgn a_{ji} \neq 0$. If $E_{ij}$ exists, its $i$th and $j$th coordinates are given by $\frac{a_{ij}}{a_{ij}+a_{ji}}$ and $\frac{a_{ji}}{a_{ij}+a_{ji}}$, respectively. The internal eigenvalue (within the edge $\mathcal{F}_{ij}$) at $E_{ij}$ is given by $-\frac{a_{ij}a_{ji}}{a_{ij}+a_{ji}}$. For $k \neq i,j$, the external eigenvalue at $E_{ij}$ in the direction $E_k$ is given by
\begin{align}\label{eq:Gammaijk}
\Gamma_{ij}^k = \frac{\dot x_k}{x_k}\Big|_{x=E_{ij}} = \frac{a_{ki}a_{ij}+ a_{kj}a_{ji}-a_{ij}a_{ji}}{a_{ij}+a_{ji}},
\end{align}
see \cite[(20.17)]{HS88}.

Let $\widehat x \in \Delta_n$ be an equilibrium of the ODE~\eqref{eq:ode_A_n} with support $I \subseteq \{1, \ldots, n\}$. Then $(A\widehat x)_i = \widehat x^\mathsf{T} A \widehat x$ for $i \in I$ and $\widehat x_i = 0 $ for $i \notin I$. The equilibrium $\widehat x$ is said to be \emph{saturated} if $(A\widehat x)_i \leq \widehat x^\mathsf{T} A \widehat x$ for all $i \notin I$. Note that $(A\widehat x)_i - \widehat x^\mathsf{T} A \widehat x$ is the external eigenvalue at $\widehat{x}$ in the direction $i$. Therefore, an equilibrium is saturated if and only if all the external eigenvalues are non-positive. In particular, we have the following lemma.

\begin{lemma}
Consider the ODE~\eqref{eq:ode_A_n} with $a_{ii} = 0$ for all $i = 1,\ldots,n$. Then the following statements hold.
\begin{enumerate}[(i)]
\item Every interior equilibrium is saturated.
\item The corner $E_k$ is saturated if and only if $a_{ik} \leq 0$ for all $i\neq k$.
\item If there exists a unique edge equilibrium $E_{ij}$ in the edge $\mathcal{F}_{ij}$, it is saturated if and only if $\Gamma_{ij}^k\leq0$ for all $k \neq i,j$.
\end{enumerate}
\end{lemma}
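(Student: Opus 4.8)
The plan is to deduce all three parts directly from the characterization recorded just above the lemma: an equilibrium $\widehat{x}$ with support $I$ is saturated precisely when the external eigenvalue $(A\widehat{x})_i - \widehat{x}^\mathsf{T}A\widehat{x}$ is non-positive for every $i \notin I$. Each part then reduces to identifying the support and evaluating these external eigenvalues using the formulas already collected in this appendix. For part (i), I would simply observe that an interior equilibrium has full support $I = \{1,\ldots,n\}$, so there are no indices $i \notin I$; the saturation condition is vacuously satisfied and the claim is immediate.

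For part (ii), the corner $E_k$ has support $I = \{k\}$, so the relevant directions are the indices $i \neq k$. A direct computation gives $(AE_k)_i = a_{ik}$ and $E_k^\mathsf{T}AE_k = a_{kk} = 0$, so the external eigenvalue in direction $i$ equals $a_{ik}$, consistent with the eigenvalue $a_{lk}$ at $E_k$ in the direction $E_l$ recorded earlier. Saturation is therefore equivalent to $a_{ik} \leq 0$ for all $i \neq k$, as asserted.

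For part (iii), the edge equilibrium $E_{ij}$ has support $I = \{i,j\}$, so the relevant directions are the indices $k \neq i,j$. By definition, the external eigenvalue at $E_{ij}$ in the direction $E_k$ is exactly $\Gamma_{ij}^k$ as given in \eqref{eq:Gammaijk}. Hence $E_{ij}$ is saturated if and only if $\Gamma_{ij}^k \leq 0$ for all $k \neq i,j$, which is the statement; here the hypothesis that a unique edge equilibrium exists is exactly what guarantees $\Gamma_{ij}^k$ is well defined via the denominator $a_{ij}+a_{ji}\neq 0$.

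I do not anticipate a genuine obstacle: the lemma merely repackages the general saturation criterion in the three cases where the support has size $n$, $1$, and $2$. The only point requiring slight care is the bookkeeping in part (ii), namely confirming $E_k^\mathsf{T}AE_k = a_{kk} = 0$ under the standing normalization $a_{ii}=0$, so that the external eigenvalue collapses to $a_{ik}$ rather than $a_{ik} - a_{kk}$.
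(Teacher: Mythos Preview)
Your proposal is correct and follows exactly the paper's approach: the paper states the lemma with the phrase ``In particular, we have the following lemma'' and gives no separate proof, treating it as an immediate consequence of the characterization ``an equilibrium is saturated if and only if all the external eigenvalues are non-positive'' recorded just before. Your argument simply makes this immediacy explicit case by case, which is precisely what is intended.
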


For a proof of the following lemma, see \cite[Theorem 7.2.1]{HS98}.
\begin{lemma} \label{lem:saturated}
If an interior orbit of the ODE~\eqref{eq:ode_A_n} converges to an equilibrium $\widehat x$ on the boundary of $\Delta_n$, as $t \to \infty$, then $\widehat x$ is saturated.
\end{lemma}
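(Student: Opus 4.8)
The plan is to argue by contradiction, using that $(Ax)_i - x^\mathsf{T}Ax$ is precisely the logarithmic derivative of the $i$th coordinate along the flow. Suppose $\widehat x$ were \emph{not} saturated. By the characterization of saturation recalled just before the lemma, some external eigenvalue would then be strictly positive: there is an index $k$ outside the support $I$ of $\widehat x$ (hence $\widehat x_k = 0$) with
\[
\lambda := (A\widehat x)_k - \widehat x^\mathsf{T}A\widehat x > 0.
\]

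First I would note that the interior $\Delta_n^\circ$ is forward invariant, because each face $\{x_k = 0\}$ is invariant under the ODE~\eqref{eq:ode_A_n} (the $k$th component of the vector field vanishes whenever $x_k = 0$). Consequently, along the given interior orbit $x(t)$ one has $x_k(t) > 0$ for all $t$, so that $\log x_k(t)$ is well-defined and satisfies
\[
\frac{\mathrm{d}}{\mathrm{d}t}\log x_k(t) = (Ax(t))_k - x(t)^\mathsf{T}Ax(t).
\]

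The decisive step is a continuity-plus-integration estimate. Since $x(t) \to \widehat x$ and the map $x \mapsto (Ax)_k - x^\mathsf{T}Ax$ is a fixed polynomial function of $x$, its value along the orbit converges to $\lambda > 0$. I would pick $T$ so large that this value exceeds $\lambda/2$ for all $t \geq T$; integrating the logarithmic derivative from $T$ to $t$ then gives
\[
\log x_k(t) \geq \log x_k(T) + \tfrac{\lambda}{2}\,(t - T) \longrightarrow +\infty,
\]
whence $x_k(t) \to +\infty$. This contradicts $0 < x_k(t) \leq 1$ (valid since $x(t) \in \Delta_n$), and equally contradicts the hypothesis $x_k(t) \to \widehat x_k = 0$. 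Therefore no positive external eigenvalue can occur, i.e.\ $\widehat x$ is saturated.

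I do not anticipate a genuine obstacle: the whole argument is a single Gr\"onwall-type exponential-growth estimate. The two points deserving care are the appeal to forward invariance of $\Delta_n^\circ$ (needed to guarantee $x_k(t) > 0$, so that $\log x_k$ makes sense), and the continuity step, which relies on $A$ being constant so that $(Ax)_k - x^\mathsf{T}Ax$ is one fixed function evaluated along a convergent trajectory.
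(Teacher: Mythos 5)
Your proof is correct and coincides with the standard argument behind the paper's citation: the paper itself defers to \cite[Theorem 7.2.1]{HS98}, whose proof is exactly this continuity-plus-exponential-growth contradiction for a coordinate $x_k$ with positive external eigenvalue. Both of your flagged care points (forward invariance of $\Delta_n^\circ$ via uniqueness, and continuity of $x \mapsto (Ax)_k - x^{\mathsf{T}}Ax$) are handled adequately, so nothing is missing.
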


\section{The replicator dynamics with matrix \eqref{eq:A}} \label{sec:app_special}

Assuming there exists a unique edge equilibrium $E_{ij}$ on the edge $\mathcal{F}_{ij}$ for the ODE~\eqref{eq:ode_A} with matrix \eqref{eq:A}, we compute the corresponding external eigenvalues. The formula \eqref{eq:Gammaijk} gives
\begin{align*}
\Gamma_{12}^3 &= \Gamma_{12}^4 = \Gamma_{34}^1 = \Gamma_{34}^2 = 0, \\
\Gamma_{13}^k &= \frac{1}{(b_1-b_3)+(a_3-a_1)}\cdot\begin{cases} (-c_4), & k = 2, \\ (+c_2), & k = 4, \end{cases} \\
\Gamma_{23}^k &= \frac{1}{(b_2-b_3)+(a_2-a_3)}\cdot\begin{cases} (-c_4), & k = 1, \\ (+c_1), & k = 4, \end{cases} \\
\Gamma_{24}^k &= \frac{1}{(b_4-b_2)+(a_2-a_4)}\cdot\begin{cases} (-c_3), & k = 1, \\ (+c_1), & k = 3, \end{cases} \\
\Gamma_{14}^k &= \frac{1}{(b_4-b_1)+(a_4-a_1)}\cdot\begin{cases} (-c_3), & k = 2, \\ (+c_2), & k = 3, \end{cases} \\
\end{align*}
where $c_1$, $c_2$, $c_3$, $c_4$ are given by the equations \eqref{eq:c}. Assuming $a_4 \leq a_2 < a_1 \leq a_3$, we have
\begin{align}\label{eq:Gamma_special}
\begin{split}
\begin{aligned}
\sgn\Gamma_{13}^2 &= -\sgn c_4, \\
\sgn\Gamma_{23}^1 &= +\sgn c_4, \\
\sgn\Gamma_{24}^1 &= -\sgn c_3, \\
\sgn\Gamma_{14}^2 &= +\sgn c_3, \\
\end{aligned}
\hspace{1cm}
\begin{aligned}
\sgn\Gamma_{13}^4 &= +\sgn c_2, \\
\sgn\Gamma_{23}^4 &= -\sgn c_1, \\
\sgn\Gamma_{24}^3 &= +\sgn c_1, \\
\sgn\Gamma_{14}^3 &= -\sgn c_2. \\
\end{aligned}
\end{split}
\end{align}




\bibliographystyle{abbrv}

\bibliography{stability,perm}


\end{document}